\newcommand{\ZZ}{\mathbb{Z}}
\newcommand{\PP}{\mathbb{P}}
\newcommand{\kk}{\mathbbm{k}}
\newcommand{\RR}{\mathbb{R}}
\newcommand{\QQ}{\mathbb{Q}}
\newcommand{\M}{\mathrm{M}}
\renewcommand{\P}{\mathrm{P}}
\newcommand{\be}{\mathbf{e}}
\newcommand{\E}{{[m]}}
\newcommand{\EE}{E}
\newcommand{\rk}{\operatorname{rk}}
\theoremstyle{definition}
\newtheorem{theorem}{Theorem}[section]
\newtheorem{corollary}[theorem]{Corollary}
\newtheorem{lemma}[theorem]{Lemma}
\newtheorem{proposition}[theorem]{Proposition}
\newtheorem{definition}[theorem]{Definition}
\newtheorem{conjecture}[theorem]{Conjecture}
\newtheorem{example}[theorem]{Example}
\newtheorem{remark}[theorem]{Remark}
\newtheorem{question}[theorem]{Question}
\newtheorem{notation}[theorem]{Notation}
\title{K-theoretic positivity for matroids}
\author{Christopher Eur, Matt Larson}
\begin{document}
\
\maketitle

\vspace{-21pt}

\begin{abstract}
Hilbert polynomials have positivity properties under favorable conditions.  We establish a similar ``$K$-theoretic positivity'' for matroids.
As an application, for a multiplicity-free subvariety of a product of projective spaces such that the projection onto one of the factors is generically finite onto its image, we show that a transformation of its $K$-polynomial is Lorentzian.
This partially answers a conjecture of Castillo, Cid-Ruiz, Mohammadi, and Monta\~{n}o.
As another application, we show that the $h^*$-vector of a simplicially positive divisor on a matroid
is a Macaulay vector, affirmatively answering a question of Speyer for a new infinite family of matroids.
\end{abstract}

\section{Introduction}

For a $d$-dimensional lattice polytope $Q$, 
Stanley \cite{StanleyEhrhart} showed that the $h^*$-vector $(h^*_0(Q), \dotsc, h^*_d(Q))$ defined by
\[
\sum_{k \geq 0} |\{\text{lattice points in $kQ$}\}|q^k = \frac{h_0^*(Q) + h_1^*(Q) q + \dotsb + h_d^*(Q) q^d}{(1-q)^{d+1}}
\]
is nonnegative, and it is furthermore a Macaulay vector (Definition~\ref{def:Macaulayvector}) if, for every $k$, all lattice points in $kQ$ are sums of lattice points in $Q$.
Via standard results in toric geometry \cite[Chapter 9]{CLS}, this result can be formulated geometrically as  ``$K$-theoretic positivity'' in the following way.

Let $X$ be a smooth projective toric variety with fan $\Sigma$, and 
let $\chi \colon K(X) \to \ZZ$ be the sheaf Euler characteristic map on the Grothendieck ring $K(X)$ of vector bundles on $X$.
For a nef line bundle $\mathcal{L}$ associated to a lattice polytope $Q$ whose normal fan coarsens $\Sigma$,
toric vanishing theorems imply that $\chi(X,\mathcal L^{\otimes k}) = \dim H^0(X,\mathcal L^{\otimes k}) = |\{\text{lattice points in $kQ$}\}|$ (for $k\geq 0$), and that the graded ring
$
R_{\mathcal{L}}^{\bullet} \coloneqq  \bigoplus_{k \ge 0} H^0(X, \mathcal{L}^{\otimes k})
$
is Cohen--Macaulay.
See Proposition~\ref{prop:ampleMacaulay} for a detailed review.
Quotienting $R_{\mathcal L}^{\bullet}$ by a linear system of parameters, the vector $(h_0^*(\mathcal L), \dotsc, h_d^*(\mathcal L))$ defined by
$$\sum_{k \ge 0} \chi(X, \mathcal{L}^{\otimes k}) q^k = \text{Hilbert series of $R_{\mathcal L}^{\bullet}$} = \frac{h_0^*(\mathcal L) + h_1^*(\mathcal L)q + \dotsb + h_d^*(\mathcal L)q^d}{(1 - q)^{\dim Q + 1}}$$
is the Hilbert function of a graded artinian ring.  In particular, 
the vector $(h_0^*(\mathcal L), \dotsc, h_d^*(\mathcal L))$ is nonnegative, and it is furthermore a Macaulay vector if $R_{\mathcal{L}}^{\bullet}$ is generated in degree $1$.

\smallskip
Here, we establish a similar positivity property for matroids.
We begin in the more general setting of polymatroids.
For a nonnegative integer $m$, let $\E =\{1, \dotsc, m\}$, and let $\textbf a = (a_1, \ldots, a_m)$ be a sequence of nonnegative integers.

\begin{definition}
A \emph{polymatroid} $\P$ on $\E$ with \emph{cage} $\textbf a$ is 
a function $\operatorname{rk}_\P \colon 2^\E \to \mathbb{Z}_{\ge 0}$ satisfying
\begin{enumerate}
\item (Submodularity) $\operatorname{rk}_\P(I_1) + \operatorname{rk}_\P(I_2) \ge \operatorname{rk}_\P(I_1 \cap I_2) + \operatorname{rk}_\P(I_1 \cup I_2)$ for any $I_1, I_2 \subseteq \E$,
\item (Monotonicity) $\operatorname{rk}_\P(I_1) \le \operatorname{rk}_\P(I_2)$ for any $I_1 \subseteq I_2\subseteq \E$,
\item (Normalization) $\operatorname{rk}_\P(\emptyset) = 0$, and
\item (Cage) $\operatorname{rk}_\P(i) \le a_i$ for any $i \in \E$. 
\end{enumerate}
We say that $\operatorname{rk}_\P$ is the \emph{rank function} of the polymatroid $\P$, and that $\P$ has \emph{rank} $r=\operatorname{rk}_\P(\E)$.
\end{definition}

A \emph{matroid} is a polymatroid with cage $(1, \dotsc, 1)$.
See \cite{Wel76} for the fundamentals of matroid theory.
In \cite{LLPP}, analogues of $K$-rings for matroids were introduced, modeled after the following geometry of realizable matroids. Let $\kk$ be a field. 
A \emph{realization} of a matroid $\M$ on a finite set $\EE$ is a linear subspace $L \subseteq \kk^{\EE}$ such that
$
\operatorname{rk}_\M(S) = \dim\big(\text{image of $L$ under the projection $\kk^{\EE} \to \kk^S$}\big)
$
for all $S\subseteq \EE$.
A realization $L \subseteq \kk^{\EE}$ defines a smooth projective irreducible variety $W_L$ called the \emph{augmented wonderful variety} \cite{BHMPW20a}, defined by
\[
W_L = \text{the closure of the image of $L$ in $\prod_{\emptyset \subsetneq S \subseteq \EE} \mathbb{P} \big(\kk^S \oplus \kk\big)$},
\]
where the map $L \to \mathbb{P}(\kk^S \oplus \kk)$ is the composition of the projection $L\to \kk^S$ with the projective completion $\kk^S \hookrightarrow \PP(\kk^S \oplus \kk)$.
For $\emptyset \subsetneq S\subseteq E$, 
let $\mathcal L_S$ be the line bundle on $W_L$ obtained by pulling back $\mathcal O(1)$  from $\PP(\kk^S \oplus \kk)$.
These line bundles $\{\mathcal L_S\}_{\emptyset\subsetneq S \subseteq E}$ generate the Picard group of $W_L$, and their $K$-classes $\{[\mathcal L_S]\}_{\emptyset\subsetneq S \subseteq \EE}$ generate the Grothendieck ring of vector bundles $K(W_L)$ as a ring \cite[Theorem 5.2]{LLPP}.

\medskip
For an arbitrary (not necessarily realizable) matroid $\M$, the authors of \cite{LLPP} introduced the \emph{augmented $K$-ring} $K(\M)$ of $\M$.  The following are its key properties:
\begin{enumerate}[label = (\roman*)]
\item It is equipped with an ``Euler characteristic map'' $\chi(\M,-) \colon K(\M) \to \ZZ$.
\item Each nonempty subset $S\subseteq \EE$ defines an element $[\mathcal L_S] \in K(\M)$ such that $\{[\mathcal L_S]\}_{\emptyset \subsetneq S \subseteq \EE}$ generates $K(\M)$ as a ring.  A \emph{line bundle} in $K(\M)$ is a Laurent monomial in the $[\mathcal L_S]$.
\item When $\M$ has a realization $L\subseteq \kk^{\EE}$, identifying the $[\mathcal L_S]$ in $K(\M)$ and $K(W_L)$ gives an isomorphism $K(\M) \simeq K(W_L)$ such that  $\chi(\M,-) = \chi(W_L, -)$.
\end{enumerate}
See Section~\ref{ssec:augK} for the definition of $K(\M)$ and further properties of of $K(\M)$ and $\chi(\M,-)$.

\medskip
To state our main theorem about $K(\M)$, we prepare with the following constructions: 
\begin{itemize}
\item For a matroid $\M$ on a finite set $\EE$ and subsets $S_1, \dotsc, S_m\subseteq \EE$, the function $\rk \colon 2^{\E} \to \ZZ$ defined by $\rk(I) = \rk_\M(\bigcup_{i\in I} S_i)$ is a polymatroid, which we call the \emph{restriction polymatroid of $\M$ to $S_1, \dotsc, S_m$}.  Every polymatroid is a restriction polymatroid of a matroid; see Definition~\ref{defn:lift}.
\item For a polymatroid $\P$ with cage $(a_1, \dotsc, a_m)$, define a subvariety $Y_\P \subseteq \PP^{a_1} \times \cdots \times \PP^{a_m}$ as follows.  For $i\in [m]$ and an integer $0\leq j \leq a_i$, let $L_i(j)$ be the $j$-dimensional linear subvariety $\{[x_0, \dots, x_{a_i}] \in \PP^{a_i}: x_k = 0 \text{ if $k > j$}\}$ of $\PP^{a_i}$.  We define
\[
Y_\P = \bigcup_{(b_1, \dotsc, b_m)\in B(\P)} L_1(b_1) \times \dotsb \times L_m(b_m)
\]
where the union runs over all lattice points $(b_1, \dotsc, b_m) \in \ZZ^m$ in the \emph{base polytope} of $\P$ defined as $B(\P) = \{ (x_1, \dotsc, x_m) \in \RR^m_{\geq 0} : \sum_{i\in [m]} x_i = \operatorname{rk}_\P([m]) \text{ and } \sum_{i\in I} x_i \leq \operatorname{rk}_\P(I) \text{ for all $I\subseteq [m]$}\}.$
Note that the variety $Y_\P$ and the restrictions to $Y_\P$ of the line bundles $\mathcal{O}(k_1, \dotsc, k_m)$ on $\PP^{a_1} \times \dotsm \times \PP^{a_m}$  does not depend on the choice of the cage $\textbf a$. 
\end{itemize}

\begin{theorem}\label{thm:projection}
For a polymatroid $\P$ and a matroid $\M$ on $\EE$ with subsets $S_1, \dotsc, S_m \subseteq \EE$ such that the restriction polymatroid is $\P$, one has
\[
\chi(\M, \mathcal L_{S_1}^{\otimes k_1} \otimes \dotsb  \otimes \mathcal L_{S_m}^{\otimes k_m}) = \chi(Y_\P, \mathcal O(k_1, \dots, k_m)) \quad\text{for all $k_1, \dotsc, k_m$}.
\] 
\end{theorem}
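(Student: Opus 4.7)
The plan is to reduce to the realizable case and then to perform a projection-formula computation there. The first step is to observe that the left-hand side depends only on the polymatroid $\P$: one expects $\chi(\M,-)$ to be valuative with respect to matroid base polytope subdivisions, and the restriction polymatroid $\P$ is recovered from the image of the matroid base polytope of $\M$ under the coordinate-summing projection $\RR^{\EE} \to \RR^m$. Hence $\chi(\M, \mathcal L_{S_1}^{\otimes k_1} \otimes \cdots \otimes \mathcal L_{S_m}^{\otimes k_m})$ should factor through $\P$, and since realizable matroids (for instance Schubert matroids) span the valuative group, it then suffices to prove the identity for a single realizable pair $(\M, S_1, \ldots, S_m)$ representing $\P$.

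Assume then that $\M$ is realized by $L \subseteq \kk^{\EE}$, so by property~(iii) the left-hand side equals $\chi(W_L, \mathcal L_{S_1}^{\otimes k_1} \otimes \cdots \otimes \mathcal L_{S_m}^{\otimes k_m})$. The tuple $(S_1, \ldots, S_m)$ selects $m$ factors of the ambient product in which $W_L$ sits, inducing a projection $\pi \colon W_L \to \PP^{a_1} \times \cdots \times \PP^{a_m}$ along which each $\mathcal L_{S_i}$ is the pullback of $\mathcal O(1)$ from the $i$-th factor. If I can establish that (I) the scheme-theoretic image of $\pi$ is $Y_\P$ and (II) $R\pi_* \mathcal O_{W_L} = \mathcal O_{Y_\P}$, the projection formula immediately yields the theorem. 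For (I), I would use the torus action on $W_L$: the $T$-fixed points of the image are precisely the lattice points of $B(\P)$, and closures of $T$-orbits through these fixed points recover the components $L_1(b_1) \times \cdots \times L_m(b_m)$ of $Y_\P$. For (II), I would stratify $Y_\P$ by its torus orbits and identify the fibers of $\pi$ over each stratum with augmented wonderful varieties of appropriate minors of $\M$, whose structure sheaves have vanishing higher cohomology.

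The main obstacle I foresee is (II): because $Y_\P$ is reducible and non-normal along the overlaps of its components, the vanishing of $R^{>0}\pi_* \mathcal O_{W_L}$ does not follow from any single general theorem but requires analysis of the fibers stratum by stratum together with a Mayer--Vietoris argument to glue the local answers across components. A safer alternative is to bypass (II) entirely by expanding both sides as polynomials in $k_1, \ldots, k_m$: the right-hand side admits, via inclusion--exclusion over the components of $Y_\P$, an alternating expansion in products of binomial coefficients $\prod_i \binom{k_i + b_i}{b_i}$ indexed by faces of the polyhedral complex of components, while a matroidal Hirzebruch--Riemann--Roch formula on $K(\M)$ should produce the same combinatorial expression; matching the two then reduces the theorem to a direct statement about the lattice-point enumerator of $B(\P)$.
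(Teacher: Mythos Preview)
Your overall strategy---reduce to the realizable case via valuativity, then use a projection-formula argument---matches the paper's, but the realizable-case computation rests on a false claim. The scheme-theoretic image of $\pi \colon W_L \to \PP^{a_1} \times \cdots \times \PP^{a_m}$ is \emph{not} $Y_\P$: the image $Y$ is the closure of the linear space $L$ in the product, an \emph{irreducible} variety, whereas $Y_\P$ is a reducible union of coordinate-linear strata. For a generic realization $L$ there is no torus action on $W_L$ or on $Y$, so the torus-orbit picture you sketch for (I) does not apply. What the paper does instead is work with the actual image $Y$: it has rational singularities in characteristic $0$ (a result of Berget--Fink, building on Brion), which gives $R\pi_* \mathcal{O}_{W_L} = \mathcal{O}_Y$ in one stroke; then Brion's flat degeneration of any irreducible multiplicity-free subvariety to $Y_\P$ yields $[\mathcal{O}_Y] = [\mathcal{O}_{Y_\P}]$ in $K(\prod_i \PP^{a_i})$, hence equal Euler characteristics against every $\mathcal{O}(k_1,\ldots,k_m)$. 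The rational-singularities input is the missing idea; without it, the fiberwise/Mayer--Vietoris analysis you worry about for (II) is both aimed at the wrong target and unnecessary.

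Two smaller points on the reduction step. First, valuativity of $\chi(\M,-)$ in $\M$ does \emph{not} imply that the left-hand side depends only on $\P$: two matroids with the same restriction polymatroid need not satisfy any valuative relation. The paper obtains this dependence directly from the explicit Hall--Rado formula $\chi(\M, \bigotimes_i \mathcal{L}_{S_i}^{\otimes t_i}) = \sum_{\mathbf{k} \in I(\P)} \mathbf{t}^{(\mathbf{k})}$. Second, the valuativity argument must run in $\P$, not in $\M$: one replaces $\M$ by the multisymmetric lift of $\P$, checks that the left side is valuative in $\P$ (composition of two valuative maps), and separately that the right side is valuative in $\P$ via Knutson's recursive formula for $[\mathcal{O}_{Y_\P}]$---an ingredient you do not mention. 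Only then does agreement on realizable $\P$ suffice.
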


This theorem originates from the following geometry.
Let $X \subseteq \PP^{a_1} \times \dotsb \times \PP^{a_m}$ be an irreducible \emph{multiplicity-free subvariety} (i.e., the coefficients of its multidegree are 0 or 1).
The function $\rk_\P \colon  2^{[m]} \to \ZZ$ defined by $\rk_\P(I) = \dim \big(\text{image of $X$ under the projection to $\prod_{i\in I} \PP^{a_i}$}\big)$ is a polymatroid $\P$ by \cite[Corollary 4.7]{BH}. 
Brion \cite{BrionMultiplicity} showed that any such $X$ has a flat degeneration to $Y_\P$.

For example, if a matroid $\M$ has a realization $L\subseteq \kk^E$, and if  the dimension of the projection of $W_L$ to $\PP(\kk^{S_1} \oplus \kk) \times \dotsb \times \PP(\kk^{S_m} \oplus \kk)$ is equal to the dimension of $L$, then the projection is an irreducible multiplicity-free subvariety $X$ whose polymatroid $\P$ is the restriction polymatroid of $\M$ to $S_1, \dotsc, S_m$.
Thus, if furthermore the projection is an isomorphism $W_L\simeq X$, Brion's flat degeneration implies Theorem~\ref{thm:projection} in this special case. 
We prove Theorem~\ref{thm:projection} in general by using properties of \emph{polymatroid valuativity} \cites{DF10,ELPoly} (Definition~\ref{def:valuative}) and the fact that multiplicity-free subvarieties have rational singularities in characteristic $0$ \cite[Theorem 4.3]{BergetFink}.

\medskip
On the other hand, combining Brion's flat degeneration with Theorem~\ref{thm:projection} implies the following.

\begin{corollary}\label{cor:multfree}
Let $\P$, $\M$, and $(S_1, \dotsc, S_m)$ be as above.  If $X \subseteq \PP^{a_1} \times \dotsb \times \PP^{a_m}$ is any irreducible multiplicity-free subvariety with $\operatorname{rk}_\P(I) = \dim\big(\operatorname{Image}(X \to \prod_{i\in I} \PP^{a_i})\big)$ for all $I\subseteq [m]$, then one has 
\[\chi(\M, \mathcal L_{S_1}^{\otimes k_1} \otimes \dotsb  \otimes \mathcal L_{S_m}^{\otimes k_m}) = \chi(X, \mathcal O(k_1, \dots, k_m))
\]
for all line bundles $\mathcal O(k_1, \dotsc, k_m)$ on $\mathbb{P}^{a_1} \times \dotsb \times \mathbb{P}^{a_m}$.
\end{corollary}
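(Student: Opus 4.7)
The proof is a direct combination of the two ingredients already in hand: Theorem~\ref{thm:projection} and Brion's flat degeneration \cite{BrionMultiplicity}. My plan is to produce the claimed equality by chaining two equalities,
\[
\chi(X, \mathcal O(k_1, \dots, k_m)) \;=\; \chi(Y_\P, \mathcal O(k_1, \dots, k_m)) \;=\; \chi(\M, \mathcal L_{S_1}^{\otimes k_1} \otimes \dotsb \otimes \mathcal L_{S_m}^{\otimes k_m}),
\]
where the second equality is exactly Theorem~\ref{thm:projection}.

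For the first equality, I would invoke Brion's theorem to obtain a flat family $\mathcal X \to \mathbb{A}^1$, embedded in $(\PP^{a_1} \times \dotsb \times \PP^{a_m}) \times \mathbb{A}^1$ over $\mathbb{A}^1$, whose generic fiber is isomorphic to $X$ and whose special fiber is $Y_\P$ (both compatibly with the embedding into the product of projective spaces, so that $\mathcal O(k_1,\dotsc, k_m)$ pulls back to the correct line bundle on each fiber). The key point is then the deformation invariance of sheaf Euler characteristics in flat families: for any line bundle on the total space $\mathcal{X}$, its Euler characteristic on a fiber is locally constant on the base (indeed constant, since $\mathbb A^1$ is connected). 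Applying this to the restriction of $\mathcal O(k_1, \dotsc, k_m)$ from the ambient product gives the first equality.

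For the second equality, Theorem~\ref{thm:projection} applies directly once we confirm the hypothesis: the polymatroid whose rank function is $I \mapsto \dim\big(\operatorname{Image}(X \to \prod_{i \in I} \PP^{a_i})\big)$ is by assumption $\P$, and $\P$ is in turn the restriction polymatroid of $\M$ to $S_1, \dotsc, S_m$. Since both the variety $Y_\P$ and the restrictions of the bundles $\mathcal O(k_1, \dotsc, k_m)$ to $Y_\P$ are independent of the cage, any convenient cage (e.g.\ the one coming from $X$) can be used.

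There is really no hard step here; the only mild subtlety is checking that Brion's degeneration does respect the embedding in the product of projective spaces, so that the same line bundle $\mathcal O(k_1, \dotsc, k_m)$ pulls back to the Cartier divisor used on both fibers — otherwise flat invariance of Euler characteristics could not be applied to this specific line bundle. This is built into the construction in \cite{BrionMultiplicity}, so the corollary follows.
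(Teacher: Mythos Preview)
Your proof is correct and is exactly the argument the paper has in mind: the sentence preceding the corollary says ``combining Brion's flat degeneration with Theorem~\ref{thm:projection} implies the following,'' and the paper later records this as Proposition~\ref{prop:brion}, noting that the flat degeneration forces $[\mathcal{O}_X] = [\mathcal{O}_{Y_\P}]$ in $K(\PP^{a_1}\times\dotsb\times\PP^{a_m})$ by constancy of Euler characteristics in proper flat families. Your chain $\chi(X,\mathcal O(\mathbf k)) = \chi(Y_\P,\mathcal O(\mathbf k)) = \chi(\M,\bigotimes_i \mathcal L_{S_i}^{\otimes k_i})$ is precisely this.
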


As an application, we use Corollary~\ref{cor:multfree} to study Snapper polynomials of multiplicity-free subvarieties via matroid theory.
For a projective variety $X$ and line bundles $\mathcal{L}_1, \dotsc, \mathcal{L}_m$ on $X$, the function assigning to each tuple of integers $(t_1, \dotsc, t_m)$ the Euler characteristic $\chi(X, \mathcal{L}_1^{\otimes t_1} \otimes \dotsb \otimes \mathcal{L}_m^{\otimes t_m})$ is a polynomial \cite{Snapper}, which is often called the \emph{Snapper polynomial}. This property also holds for $\chi(\M, -)$, allowing us to define Snapper polynomials for matroids, for which we establish the following.

\smallskip
For a sequence $\mathbf k = (k_1, \dotsc, k_m)$ of nonnegative integers, set $|\mathbf k| = \sum_i k_i$,  and denote $\mathbf t^{\mathbf k} = t_1^{k_1}\dotsm t_m^{k_m}$ and $\mathbf t^{[\mathbf k]} = \binom{t_1+k_1}{k_1}\dotsm \binom{t_m + k_m}{k_m}$, where $\binom{t}{k} = \frac{t(t-1) \dotsm (t-k+1)}{k!}$.

\begin{theorem}\label{thm:lorentzian}
For a matroid $\M$ on $E$ and subsets $S_1, \dotsc, S_m$ of $E$ whose restriction polymatroid has rank $r$, define a polynomial $H(t_1, \dotsc, t_m)$ by
\[
H(t_1, \dotsc, t_m) = \sum_{\mathbf k} a_{\mathbf k} \mathbf t^{\mathbf k} \quad\text{such that}\quad \chi\Big(\M, \mathcal L_{S_1}^{\otimes t_1} \otimes \dotsm \otimes \mathcal L_{S_m}^{\otimes t_m}\Big) = \sum_{\mathbf k} (-1)^{r-|\mathbf k|} a_{\mathbf k}  \mathbf t^{[\mathbf k]}.
\]
Suppose at least one of $S_1, \dotsc, S_m$ satisfies $\rk_\M(S_i) = r$.
Then, the homogenization $\widetilde H(\mathbf t, t_0) = \sum_{\mathbf k} a_{\mathbf k}\mathbf t^{\mathbf k}  t_0^{r - |\mathbf k|}$ by an auxiliary variable $t_0$ is \emph{denormalized Lorentzian} in the sense of \cite{BH}.
\end{theorem}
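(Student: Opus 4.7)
The plan is to reduce \Cref{thm:lorentzian} to a geometric statement about the multiplicity-free variety $Y_\P$ via \Cref{thm:projection}, and then establish the denormalized Lorentzian property by realizing $\widetilde H$ as a volume polynomial of nef classes on a smooth projective variety. By \Cref{thm:projection}, the Snapper polynomial $\chi(\M, \mathcal L_{S_1}^{\otimes t_1} \otimes \dotsm \otimes \mathcal L_{S_m}^{\otimes t_m})$ coincides with $\chi(Y_\P, \mathcal O(t_1, \dots, t_m))$, so $H$ depends only on the polymatroid $\P$ and we may replace $(\M, S_1, \dots, S_m)$ by any convenient lift. The hypothesis $\rk_\M(S_i) = r$ translates to the geometric condition that the $i$-th projection $\pi_i \colon Y_\P \to \PP^{a_i}$ is generically finite onto its image of dimension $r$, which is precisely the hypothesis of the CCMM conjecture.

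Next, I would identify the coefficients of $H$ geometrically. The top-degree coefficients $a_{\mathbf k}$ (those with $|\mathbf k| = r$) are the multidegrees of $Y_\P$ in $\PP^{a_1} \times \dotsm \times \PP^{a_m}$; by the multiplicity-free property, they are the indicator function of $B(\P) \cap \ZZ^m$, making the restriction $\widetilde H(\mathbf t, 0)$ the basis generating polynomial of $\P$, which is denormalized Lorentzian by Br\"anden--Huh. For the lower-degree coefficients, one can apply inclusion--exclusion to the decomposition $Y_\P = \bigcup_{\mathbf b \in B(\P) \cap \ZZ^m} L_1(b_1) \times \dotsm \times L_m(b_m)$ together with the fact that $\chi(L_1(b_1) \times \dotsm \times L_m(b_m), \mathcal O(\mathbf t)) = \prod_i \binom{t_i + b_i}{b_i}$, yielding a M\"obius-type formula for $a_{\mathbf k}$ as an alternating sum indexed by componentwise minima of subsets of $B(\P) \cap \ZZ^m$.

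To obtain denormalized Lorentzian-ness of the full homogenization $\widetilde H$, I would realize it as the volume polynomial
\[
\widetilde H(\mathbf t, t_0) = \int_{\widetilde Y} \exp\bigl(t_0 D_0 + t_1 D_1 + \dotsm + t_m D_m\bigr)
\]
on a smooth projective resolution $\widetilde Y$ of $Y_\P$, where $D_1, \dots, D_m$ are the pullbacks of hyperplane classes from the $\PP^{a_i}$ and $D_0$ is an auxiliary nef class arising from $\pi_i$. A natural choice of $\widetilde Y$ is the augmented wonderful variety $W_L$ from \cite{LLPP} in the realizable case, or its matroid-theoretic surrogate more generally. Once this realization is established, denormalized Lorentzian-ness follows directly from the Khovanskii--Teissier / Hodge--Riemann inequalities for volume polynomials of nef classes on smooth projective varieties.

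The main obstacle I anticipate is this volume-polynomial realization: writing $\widetilde H$ (including all lower-degree-in-$\mathbf t$ terms after homogenization) as a single intersection-theoretic integral, rather than just its top-degree part. This requires choosing $D_0$ compatibly with the inclusion--exclusion structure on $Y_\P$ and navigating the reducibility and singularities of $Y_\P$ itself. The hypothesis $\rk_\M(S_i) = r$ is essential here: it guarantees that $D_0$ can be taken as a hyperplane pullback via $\pi_i$ having nontrivial top-intersection with the $D_j$'s, so that the auxiliary variable $t_0$ in the homogenization genuinely corresponds to a nef divisor class on $\widetilde Y$.
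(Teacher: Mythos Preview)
Your proposal has a genuine gap, and the paper's approach is quite different from what you outline.

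The central issue is a type mismatch. Volume polynomials of nef divisors on smooth projective varieties are \emph{Lorentzian} (this is \cite[Theorem~4.6]{BH}), not \emph{denormalized Lorentzian}. If you succeeded in writing $\widetilde H(\mathbf t,t_0)=\int_{\widetilde Y}\exp\bigl(\sum t_iD_i\bigr)$, you would have shown that $\widetilde H$ is Lorentzian, whereas the theorem asks for $N(\widetilde H)$ to be Lorentzian. These are different conditions once the polynomial is not multilinear, and here $\widetilde H$ is certainly not multilinear. Moreover, the Snapper polynomial $\chi(\M,\bigotimes\mathcal L_{S_i}^{\otimes t_i})$ equals $\int \exp(\sum t_iD_i)\cdot\operatorname{Todd}$ by Hirzebruch--Riemann--Roch, not $\int\exp$; the passage from the Snapper polynomial to $H$ via the basis change $\mathbf t^{[\mathbf k]}\leadsto(-1)^{r-|\mathbf k|}\mathbf t^{\mathbf k}$ does not simply strip off the Todd class. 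So the proposed identity $\widetilde H=\int_{\widetilde Y}\exp(\sum t_iD_i)$ is not established and is unlikely to hold as stated. The ``main obstacle'' you flag is therefore not a technical detail but the entire argument.

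The paper's proof does not use \Cref{thm:projection} or any geometry of $Y_\P$. It works directly with the Hall--Rado formula (\Cref{prop:KHallRado}) for $\chi(\M,\cdot)$. The hypothesis $\rk_\M(S_m)=r$ is used combinatorially: it forces the Hall--Rado condition on $(\mathbf k,\ell)\in\ZZ_{\ge0}^{m-1}\times\ZZ_{\ge0}$ to decouple, so that $(\mathbf k,\ell)\in I(\P)$ if and only if $(\mathbf k,\ell')\in I(\P)$ for every $\ell'$ with $|\mathbf k|+\ell'\le r$. Summing the resulting geometric series in $t_E^{(\ell)}$ and using the binomial identities $t^{[k]}=\sum_{j\le k}t^{(j)}$ and $t^{(k)}=t^{[k]}-t^{[k-1]}$ gives an explicit closed form
\[
\widetilde H(\mathbf t,t_E,t_0)=\sum_{(\mathbf k,\ell)\in B(\P)}t_E^{\ell}\prod_{i=1}^{m-1}\bigl(t_i^{k_i}+t_0t_i^{k_i-1}\bigr).
\]
After normalizing, this is the basis generating polynomial of $\P$ (Lorentzian by \cite[Theorem~3.10]{BH}) acted on by the operators $\prod_i(1+t_0\,\partial/\partial t_i)$, which preserve the Lorentzian property by \cite[Proposition~2.7]{BH}. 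No resolution, no nef divisor $D_0$, and no Hodge--Riemann input are needed.
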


Theorem~\ref{thm:lorentzian} positively answers \cite[Conjecture 7.18 and Question 7.21]{CCMM} about the ``twisted $K$-polynomial'' of a multiplicity-free subvariety $X\subseteq \PP^{a_1} \times \dotsm \times \PP^{a_m}$ when, for some $i$, the dimension of the image of the projection $X \to \PP^{a_i}$ is equal to the dimension of $X$. 
In Section~\ref{ssec:appLorent}, we explain how 
\cite[Question 7.21]{CCMM} is equivalent to asking whether Theorem~\ref{thm:lorentzian} remains true without the condition ``at least one of $S_1, \dotsc, S_m$ satisfies $\rk_\M(S_i) = r$,'' which was needed in our proof.
See Remark~\ref{rem:motivation} for further discussion, and 
see Remark~\ref{rem:matroidcase} for another special case of this question. 

\medskip
As another application, we deduce properties of matroids by using geometric properties of $Y_\P$, namely, that $Y_\P$ is Cohen--Macaulay \cite{CCRC} and is a compatibly Frobenius split subvariety of the product of projective spaces \cite{BrionKumar}.
To connect to previous questions in matroid theory, 
it is convenient to phrase our statements in terms of the \emph{non-augmented $K$-ring} $\underline K(\M)$ of a loopless matroid $\M$ on $E$ (see Section~\ref{ssec:nonaug}), for which an analogue of Theorem~\ref{thm:projection} holds (Corollary~\ref{cor:wonderful}).
Like $K(\M)$, the ring $\underline K(\M)$ is equipped with a map $\underline\chi(\M,-) \colon \underline K(\M) \to \ZZ$, and each nonempty subset $S\subseteq E$ defines an element $[\underline{\mathcal L}_S] \in \underline K(\M)$ such that $\{[\underline{\mathcal L}_S]\}$ generates $\underline K(\M)$.
When $\M$ has a realization $L\subseteq \kk^E$, these objects again coincide with those of the (non-augmented) \emph{wonderful variety} $\underline W_L$ of \cite{dCP95}.

\begin{theorem}\label{thm:macaulay}
For a loopless matroid $\M$ and a \emph{line bundle $\mathcal{L}$ in $\underline K(\M)$} (i.e., a Laurent monomial in the $\underline{\mathcal L}_S$), define the \emph{$h^*$-vector} $(h^*_0(\M,\mathcal L), \dotsc, h^*_{d}(\M,\mathcal L))$ to be the coefficients of the polynomial
\[
h^*(\M,\mathcal L; q) = \sum_{k = 0}^{d} h^*_k(\M,\mathcal L) q^k \quad\text{such that}\quad \sum_{k \geq 0}\underline\chi(\M, \mathcal L^{\otimes k}) q^k = \frac{h^*(\M,\mathcal L; q)}{(1-q)^{d+1}}
\]
where $d = \text{degree of the polynomial }\underline\chi(\M, \mathcal L^{\otimes t})$.
If $\mathcal{L}$ is \emph{simplicially positive} (i.e., $\mathcal L = \bigotimes_S \underline{\mathcal L}_S^{\otimes k_S}$ for some nonnegative integers $k_S$), then the {$h^*$-vector} $(h^*_0(\M,\mathcal L), \dotsc, h^*_{d}(\M,\mathcal L))$ is a Macaulay vector and is in particular nonnegative.
\end{theorem}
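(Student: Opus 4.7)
The plan is to reduce to the geometry of $Y_\P$ and then exploit that $Y_\P$ is Cohen--Macaulay and Frobenius split. Write $\mathcal L = \bigotimes_{i=1}^m \underline{\mathcal L}_{S_i}^{\otimes k_i}$ with all $k_i \geq 0$. By the non-augmented analogue Corollary~\ref{cor:wonderful} of Theorem~\ref{thm:projection}, one has $\underline\chi(\M, \mathcal L^{\otimes k}) = \chi(Y_\P, \mathcal O(kk_1, \dotsc, kk_m))$ for an appropriate polymatroid $\P$. Setting $\mathcal M = \mathcal O(k_1, \dotsc, k_m)|_{Y_\P}$, which is nef since $k_i\geq 0$, the task reduces to showing that the section ring $R = \bigoplus_{k\geq 0} H^0(Y_\P, \mathcal M^{\otimes k})$ is a Cohen--Macaulay standard graded algebra of Krull dimension $d+1$ and that $H^i(Y_\P, \mathcal M^{\otimes k})=0$ for all $i>0$ and $k\geq 0$. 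Granted these, a linear system of parameters $\theta_1, \dotsc, \theta_{d+1} \in R_1$ exists (we may enlarge the field to be infinite), and the quotient $A=R/(\theta_1, \dotsc, \theta_{d+1})$ is an artinian standard graded ring whose Hilbert function equals the $h^*$-vector, since the vanishing gives $\sum_k \dim R_k \,q^k = \sum_k \underline\chi(\M,\mathcal L^{\otimes k})\,q^k = h^*(\M,\mathcal L;q)/(1-q)^{d+1}$. Macaulay's theorem then identifies the $h^*$-vector as a Macaulay vector.

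To establish the geometric properties, I would work over a field of positive characteristic, which is harmless since $\underline\chi(\M,-)$ and $\chi(Y_\P,-)$ are independent of the base field. By Brion--Kumar, $Y_\P$ is a compatibly Frobenius split subvariety of $\PP^{a_1}\times\dotsm\times\PP^{a_m}$. Standard consequences of compatible Frobenius splitting with ample line bundles yield the desired cohomology vanishing together with the surjectivity of the restriction $\bigoplus_k H^0(\PP^{a_1}\times\dotsm\times\PP^{a_m}, \mathcal O(kk_1,\dotsc,kk_m)) \twoheadrightarrow R$, which exhibits $R$ as a quotient of a standard graded polynomial ring and hence as generated in degree $1$. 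The Cohen--Macaulayness of $Y_\P$ (from the result of Conca--De Negri--Gorla cited as CCRC) combined with this cohomology vanishing then upgrades $R$ itself to be Cohen--Macaulay of the expected dimension $d+1$.

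The main obstacle is that $\mathcal M$ is merely nef, not necessarily ample, on the ambient product of projective spaces, so the Frobenius-splitting vanishing and normal generation theorems do not apply verbatim. This should be handled by contracting along the projection $\pi$ onto the factors $\PP^{a_i}$ with $k_i>0$: the scheme-theoretic image $\pi(Y_\P)$ is again a union of products of coordinate linear subvarieties, hence of the form $Y_{\P'}$ for a derived polymatroid $\P'$; it inherits compatible Frobenius splitting and Cohen--Macaulayness, and $\mathcal M$ descends to a genuinely ample line bundle on $Y_{\P'}$. Verifying that the pushforward $\pi_*\mathcal O_{Y_\P}=\mathcal O_{Y_{\P'}}$ and that $R^i\pi_*\mathcal M^{\otimes k}=0$ for $i>0$ (again via Frobenius splitting and the fiber structure, where the fibers are themselves of the form $Y_{\P''}$ in a product of projective spaces with trivial restricted line bundle) will transfer the cohomology vanishing and degree-$1$ generation from $Y_{\P'}$ back to $Y_\P$, completing the argument.
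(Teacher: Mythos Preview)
Your strategy is exactly the paper's: reduce to $Y_\P$ via Corollary~\ref{cor:wonderful}, then invoke the Cohen--Macaulayness of $Y_\P$ (Proposition~\ref{prop:CM}) together with compatible Frobenius splitting (Proposition~\ref{prop:Fsplit}) to obtain the cohomology vanishing and degree-$1$ generation needed for Proposition~\ref{prop:ampleMacaulay}.

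The only point where you diverge is your ``main obstacle,'' and here you are working much harder than necessary. Since $\underline{\mathcal L}_S^{\otimes 0}$ is trivial, you may simply discard every $S_i$ with $k_i=0$ before applying Corollary~\ref{cor:wonderful}. The paper does exactly this: it writes $\mathcal L=\bigotimes_{i=1}^m\underline{\mathcal L}_{S_i}^{\otimes k_i}$ with all $k_i>0$, so that $\mathcal O(k_1,\dotsc,k_m)$ is genuinely very ample on the ambient product, and Proposition~\ref{prop:Fsplit} applies directly. Your proposed projection onto the factors with $k_i>0$ is the geometric shadow of this trivial reindexing; there is no need to verify $R\pi_*\mathcal O_{Y_\P}=\mathcal O_{Y_{\P'}}$, since Corollary~\ref{cor:wonderful} applied to the shorter list of subsets already lands you on $Y_{\P'}$ with an ample line bundle.
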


One verifies that the $h^*$-vector is equivalently defined by the equation
\[
\underline\chi(\M, \mathcal L^{\otimes q}) = \sum_{k = 0}^{d} h^*_k(\M,\mathcal L) \binom{q+d-k}{d},
\]
from which one sees that $(-1)^{d}\underline\chi(\M,\mathcal L^{-1}) = h^*_{d}(\M,\mathcal L)$.
We apply Theorem~\ref{thm:macaulay} in this form to answer affirmatively a question of Speyer \cite{Speyerg} for new infinite families of matroids using a result of Fink, Shaw, and Speyer; see Section~\ref{ssec:gpoly}. 

\smallskip
When $\M$ has a realization $L\subseteq \kk^E$, the simplicially positive line bundles form a full dimensional subcone of the nef cone of $\underline W_L$, but it is usually strictly smaller than the nef cone.
In Section~\ref{ssec:LofP}, we conjecture that the conclusion of Theorem~\ref{thm:macaulay} holds for a larger family of line bundles.
This would answer Speyer's question affirmatively for all matroids. We also establish and conjecture some other properties of $h^*$-vectors of matroids. 

\subsection*{Organization}
In Section~\ref{sec:2}, we recall properties of polymatroids and (augmented) $K$-rings of matroids, and we use them to prove Theorem~\ref{thm:projection}. In Section~\ref{sec:3}, we prove Theorem~\ref{thm:lorentzian} and discuss its consequences.
In Section~\ref{sec:4}, we prove Theorem~\ref{thm:macaulay}. In Section~\ref{sec:applications}, we discuss some applications and some further properties.

\subsection*{Acknowledgements}
We thank Andrew Berget, Dan Corey, Alex Fink, June Huh, Nick Proudfoot, Kris Shaw, and David Speyer for helpful conversations.  We also
thank BIRS for their hospitality in hosting the workshop ``Algebraic Aspects of Matroid Theory.'' We thank the referee for a careful reading and helpful comments. 
The first author is supported by NSF DMS-2246518.
The second author is supported by an ARCS fellowship.

\section{The comparison theorem}\label{sec:2}

We give background on polymatroids in Section~\ref{ssec:polymat}, and we collect properties of the augmented $K$-ring of a matroid in Section~\ref{ssec:augK}.  Then, in Section~\ref{ssec:proofprojection}, we prove Theorem~\ref{thm:projection} comparing the Euler characteristic maps $\chi$ on $K(\M)$ and $Y_\P$.
Analogues for the non-augmented $K$-ring of a matroid are given in Section~\ref{ssec:nonaug}.

\subsection{Polymatroids}\label{ssec:polymat}
We review realizability, valuativity, and lifts for polymatroids.
We begin with realizations. 
Let $\P$ be a polymatroid with cage $(a_1, \dotsc, a_m)$. A \emph{realization} of $\P$ over $\kk$ is a subspace $L \subseteq V_1 \oplus \dotsb \oplus V_m$, where $V_i$ is a vector space over $\kk$ of dimension $a_i$, such that 
$$\operatorname{rk}_P(S) = \dim \left (\text{the image of }L \text{ under the projection to } \bigoplus_{i \in S} V_i \right )$$
for all $S \subseteq [m]$.
When such an $L$ exists, we say $\P$ is \emph{realizable} over $\kk$. When $\P$ is a matroid (i.e., a polymatroid of cage $(1, \dotsc, 1)$), this specializes to realizability of matroids as discussed in the introduction.

\medskip
We will obtain Theorem~\ref{thm:projection} by reducing to the case of realizable matroids.  This reduction step will be facilitated by the notion of valuativity \cites{AFR10,DF10}.

\begin{definition}\label{def:valuative}
For a polymatroid $\P$ on $[m]$, let $\textbf{1}_{\P} \colon \mathbb{R}^m \to \mathbb{Z}$ be the indicator function of its base polytope $B(\P)$. The \emph{valuative group} of polymatroids with cage $\textbf{a} = (a_1, \dotsc, a_m)$, denoted $\operatorname{Val}_{\textbf{a}}$, is the subgroup of $\mathbb{Z}^{(\mathbb{R}^m)}$ generated by $\textbf{1}_\P$ for $\P$ a polymatroid on $[m]$ with cage $\textbf{a}$.

A function from the set of polymatroids with cage $\textbf{a}$ to an abelian group is said to be \emph{valuative} if it factors through $\operatorname{Val}_{\textbf{a}}$.
\end{definition}

By \cite{DF10} or \cite[Remark 3.16]{ELPoly}, $\operatorname{Val}_{\textbf{a}}$ is generated by polymatroids which are realizable over $\mathbb{C}$. In particular, this gives the following useful result.

\begin{corollary}\label{cor:valequal}
Let $f_1$ and $f_2$ be functions from the set of polymatroids with cage $\textbf{a}$ to an abelian group $G$. If $f_1$ and $f_2$ are valuative, and if $f_1(\P) = f_2(\P)$ for any polymatroid $\P$ with cage $\textbf{a}$ that is realizable over $\mathbb{C}$, then $f_1(\P) = f_2(\P)$ for all polymatroids $\P$ with cage $\textbf{a}$. 
\end{corollary}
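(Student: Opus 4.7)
The plan is to derive this as an immediate formal consequence of the definition of valuativity combined with the cited generation result. Since both $f_1$ and $f_2$ are valuative, by Definition~\ref{def:valuative} they factor through $\operatorname{Val}_{\textbf{a}}$, giving group homomorphisms $\widetilde{f}_1, \widetilde{f}_2 \colon \operatorname{Val}_{\textbf{a}} \to G$ with $f_i(\P) = \widetilde{f}_i(\textbf{1}_\P)$ for every polymatroid $\P$ with cage $\textbf{a}$.

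The hypothesis that $f_1(\P) = f_2(\P)$ for every $\P$ realizable over $\mathbb{C}$ then translates into the statement that the two homomorphisms $\widetilde{f}_1$ and $\widetilde{f}_2$ agree on every element of the form $\textbf{1}_\P$ with $\P$ realizable over $\mathbb{C}$. By the cited result of \cite{DF10} (or \cite[Remark 3.16]{ELPoly}), the set of such $\textbf{1}_\P$ generates $\operatorname{Val}_{\textbf{a}}$ as an abelian group. Two group homomorphisms from $\operatorname{Val}_{\textbf{a}}$ to $G$ that agree on a generating set must be equal, so $\widetilde{f}_1 = \widetilde{f}_2$, and hence $f_1(\P) = f_2(\P)$ for all polymatroids $\P$ with cage $\textbf{a}$.

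There is essentially no obstacle here: the whole content of the statement is contained in the cited generation fact, and the remainder is a formal ``check on generators'' argument for group homomorphisms. The only things to be mindful of when writing this up are to state clearly that valuativity is precisely the condition enabling the factorization through $\operatorname{Val}_{\textbf{a}}$, and to cite the correct source for the assertion that polymatroids realizable over $\mathbb{C}$ suffice to generate $\operatorname{Val}_{\textbf{a}}$.
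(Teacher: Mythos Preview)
Your argument is correct and is exactly the argument the paper has in mind: the corollary is stated immediately after the sentence asserting that $\operatorname{Val}_{\textbf{a}}$ is generated by indicator functions of polymatroids realizable over $\mathbb{C}$, and your proof simply spells out the formal ``agree on generators'' step that this sentence leaves implicit.
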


Lastly, we recall multisymmetric lifts of polymatroids, a construction which has appeared many times in the literature \cites{H72,M75,L77,N86,BCF} with many different names.
We use the terminology and description given in \cites{CHLSW,ELPoly}.

\begin{definition}\label{defn:lift}
Let $\mathrm{P}$ be a polymatroid with cage $\textbf{a} = (a_1, \dotsc, a_m)$ on $[m]$. The \emph{multisymmetric lift} of $\mathrm{P}$ is a matroid $\M$ on a ground set $E$ of size $a_1 + \dotsb + a_m$ which is equipped with a distinguished partition $E = S_1 \sqcup \dotsb \sqcup S_m$ into parts of size $a_1, \dotsc, a_m$ with the following characterizing property: $\operatorname{rk}_{\M}$ is preserved by the action of the product of symmetric groups $\mathfrak{S}_{S_1} \times \dotsb \times \mathfrak{S}_{S_m}$, and
$$\operatorname{rk}_\P(I) = \operatorname{rk}_{\M}\left( \cup_{i \in I} S_i \right) \text{ for all }I \subseteq [m].$$ 
\end{definition}

Note that the multisymmetric lift depends on the choice of cage $\textbf{a}$, and that the restriction polymatroid of the multisymmetric lift $\M$ to the subsets $S_1, \dotsc, S_m$ appearing in the distinguished partition is the polymatroid $\P$.

\smallskip
The construction of the multisymmetric lift respects realizability.
When $\P$ is realized by  $L \subseteq \bigoplus_{i \in [m]} V_i$ over an infinite field $\kk$, the multisymmetric lift of $\P$ can be realized by generically choosing a basis for each $V_i$ to identify $\bigoplus_{i \in [m]} V_i$ with $\kk^{a_1 + \dotsb + a_m}$. 

\subsection{Augmented $K$-rings of matroids}\label{ssec:augK}

Let $\M$ be a matroid on a ground set $E$.

\begin{definition}
The \emph{augmented $K$-ring} $K(\M)$ of $\M$ is the Grothendieck ring of vector bundles on the toric variety $X_{\Sigma_\M}$ of the \emph{augmented Bergman fan} ${\Sigma_{\M}}$ of $\M$.
\end{definition}

The definition of the augmented Bergman fan can be found in \cite{BHMPW20a}, but it won't be needed. In \cite{LLPP}, some some additional structures on $K(\M)$ are constructed. The ring $K(\mathrm{M})$ is equipped with an ``Euler characteristic map'' $\chi(\M, -) \colon K(\mathrm{M}) \to \mathbb{Z}$. Additionally, each nonempty subset $S$ of $\EE$ defines an element $[\mathcal{L}_S] \in K(\M)$. A \emph{line bundle} in $K(\M)$ is a Laurent monomial in the $[\mathcal L_S]$. 
We record the properties of $K(\M)$ that we will need here.

\begin{proposition}\label{prop:augKfeatures}
The augmented $K$-ring $K(\M)$ of $\M$ satisfies the following properties.
\begin{enumerate}[label = (\roman*)]
\item The elements $\{[\mathcal L_S]\}_{\emptyset \subsetneq S \subseteq \EE}$ generate $K(\M)$ as a ring.  
\item When $\M$ has a realization $L\subseteq \kk^{\EE}$, identifying the $[\mathcal L_S]$ in $K(\M)$ and $K(W_L)$ gives an isomorphism $K(\M) \simeq K(W_L)$ such that  $\chi(\M,-) = \chi(W_L, -)$.
\end{enumerate}
\end{proposition}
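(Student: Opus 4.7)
The plan is to derive both claims from the construction of $K(\M)$ in \cite{LLPP} together with standard facts about the $K$-theory of smooth projective toric varieties; the proposition is essentially a consolidation of results already established there, so the ``proof'' is mostly translation of statements into the language used here.

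For part (i), the starting point is that $X_{\Sigma_\M}$ is a smooth projective toric variety, so its Grothendieck ring of vector bundles is generated as a ring by $\mathrm{Pic}(X_{\Sigma_\M})$, which in turn is generated by the classes of the torus-invariant boundary divisors corresponding to the rays of the augmented Bergman fan $\Sigma_\M$. The rays of $\Sigma_\M$ are indexed by elements of $\EE$ together with proper flats of $\M$, and the classes $[\mathcal L_S]$ for $\emptyset\subsetneq S\subseteq\EE$ are defined in \cite{LLPP} as explicit combinations of these boundary divisors. Thus the claim reduces to a direct linear-algebra verification that the $[\mathcal L_S]$ span $\mathrm{Pic}(X_{\Sigma_\M})$; this calculation is carried out in \cite{LLPP} and can be recalled without further input.

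For part (ii), the plan is to cite \cite[Theorem 5.2]{LLPP}, which gives exactly this ring isomorphism together with the compatibility of Euler characteristic maps. Geometrically, the content is that a realization $L\subseteq \kk^\EE$ produces a morphism from $W_L$ (or a suitable modification) into the toric variety $X_{\Sigma_\M}$, under which pullback sends the toric classes $[\mathcal L_S]$ to the same-named line bundle classes on $W_L$. The compatibility $\chi(\M,-) = \chi(W_L,-)$ follows because both maps can be realized as pushforward to a point, and on generators these pushforwards agree by a direct computation with the explicit description of $W_L$ as the closure in $\prod_S \PP(\kk^S\oplus\kk)$.

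The main obstacle is bookkeeping rather than mathematical substance: one must match the definitions of the $[\mathcal L_S]$ and of $\chi(\M,-)$ in \cite{LLPP} with their analogues on $W_L$ coming from the closure description in the introduction, and verify that the rays of $\Sigma_\M$ and the corresponding boundary divisors are indexed consistently on both sides. Once conventions are aligned, both parts are immediate consequences of cited results.
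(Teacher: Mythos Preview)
Your approach is essentially the same as the paper's: both parts are deduced by citing \cite{LLPP}. However, there is a genuine error in your justification for part (i). You write that $X_{\Sigma_\M}$ is a smooth \emph{projective} toric variety, and use this to conclude that $K(X_{\Sigma_\M})$ is generated by $\mathrm{Pic}(X_{\Sigma_\M})$. But the augmented Bergman fan $\Sigma_\M$ is not complete unless $\M$ is the Boolean matroid, so $X_{\Sigma_\M}$ is in general neither complete nor projective. The fact that $K(\M)$ is generated by the $[\mathcal L_S]$ is therefore not an instance of the general statement you invoke; it is established directly in \cite[Theorem 5.2]{LLPP} via an explicit description of the ring.

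There are two further bookkeeping points where your proposal diverges from the paper. First, the paper's proof notes that \cite{LLPP} states the generation result in terms of $[\mathcal L_F]$ for $F$ a nonempty \emph{flat} of $\M$, and one extends to arbitrary nonempty subsets $S$ by setting $[\mathcal L_S] = [\mathcal L_{\operatorname{cl}_\M(S)}]$; this is precisely the ``matching of conventions'' you flag as the main obstacle, but you do not actually carry it out. Second, the compatibility $\chi(\M,-) = \chi(W_L,-)$ in part (ii) is not contained in \cite[Theorem 5.2]{LLPP} alone; the paper cites \cite[Proposition 5.6]{LLPP} as well for this part.
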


\begin{proof}
These statements follow from \cite[Theorem 5.2 and Proposition 5.6]{LLPP}.  For (i), the original statement in \cite{LLPP} is in terms of the ${\mathcal L}_F$ for $F$ a nonempty flat of $\M$, but we set $\mathcal L_S = \mathcal L_{\operatorname{cl}_\M(S)}$ where $\operatorname{cl}_\M$ denotes the closure operator of the matroid $\M$.
\end{proof}

We caution that the map $\chi(\M,-)$ is generally different from the sheaf Euler characteristic map $\chi(X_{\Sigma_\M},-)$ of the toric variety $X_{\Sigma_\M}$.
Next, we review a formula for $\chi(\M,-)$ given in \cite{LLPP}, stated in terms of the following definition.

\begin{definition}
We say that a sequence $(S_1, \dotsc, S_m)$ of nonempty subsets of $E$ satisfies the \emph{Hall--Rado} condition (with respect to $\M$) if
\[
\operatorname{rk}_{\M} \left( \bigcup_{i \in I} S_i \right) \ge |I|\quad\text{for every $I \subseteq [m]$}. 
\]
Moreover, we say that $\mathbf k = (k_1, \dotsc, k_m) \in \ZZ_{\geq 0}^m$ satisfies the \emph{Hall--Rado} condition if the sequence $(S_1^{k_1}, \dotsc, S_m^{k_m})$, where $S_i^{k_i}$ denotes $S_i$ repeated $k_i$ times, satisfies the condition, or, equivalently, if
\[
\operatorname{rk}_{\M} \left( \bigcup_{i \in I} S_i \right) \ge \sum_{i \in I} k_i \quad\text{for every $I \subseteq [m]$}. 
\]
\end{definition}

If $\P$ denotes the restriction polymatroid of $\M$ to $S_1, \dotsc, S_m$, note then that $\mathbf k$ satisfies the Hall--Rado condition if and only if it is a lattice point in the \emph{independence polytope of $\P$}, defined as
\[
I(\P) = \{ (x_1, \dotsc, x_m) \in \RR_{\geq 0}^m : \sum_{i \in I} x_i \leq \rk_\P(I) \text{ for all } I \subseteq [m]\}.
\]
The independence polytope $I(\P)$ relates to the base polytope $B(\P)$ by 
$$I(\P) = \{ \mathbf x \in \mathbb{R}^m_{\ge 0} : \mathbf y - \mathbf x \in \RR_{\geq 0}^m \text{ for some } \mathbf y \in B(\P)\},$$
or, equivalently, $I(\P) = (B(\P) + \mathbb{R}^m_{\le 0}) \cap \mathbb{R}^m_{\ge 0}$, where the $+$ denotes Minkowski sum.

\begin{notation}\label{notate:()[]}
To state the formula for $\chi(\M,-)$, it is convenient to introduce the following notation.
For a nonnegative integer $k$, let $t^{(k)}$ denote the polynomial
$\binom{t + k - 1}{d} = \frac{t(t + 1) \dotsb (t + k - 1)}{k!}$.
Recall the previously introduced notation $t^{[k]} = \binom{t+k}{k}$.
The binomial identity $\binom{t+k}{k} = \binom{t+k-1}{k-1}+\binom{t+k-1}{k}$ relates these two notations by $t^{[k]} - t^{[k-1]} = t^{(k)}$, or, equivalently, by $t^{(k)} + t^{(k-1)} + \dotsb + t^{(1)} + 1 = t^{[k]}$.
\end{notation}

\begin{proposition}\label{prop:KHallRado}\cite[Corollary 7.5]{LLPP}
For a sequence $\textbf{k} = (k_S)_{S\in \mathcal S}$ of nonnegative integers indexed by a collection $\mathcal S$ of nonempty subsets of $E$, denote $\textbf{t}^{(\textbf{k})} = \prod t_S^{(k_S)}$.
We have that 
$$\chi(\M, \bigotimes_{S} \mathcal{L}_{S}^{\otimes t_S}) = \sum_{\textbf{k} \text{ satisfies Hall--Rado}} \textbf{t}^{(\textbf{k})}.$$
\end{proposition}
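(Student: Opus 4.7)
The plan is to prove the formula first for matroids realizable over $\mathbb{C}$ using geometry on the augmented wonderful variety, and then extend to arbitrary matroids by a valuativity argument via Corollary~\ref{cor:valequal}.

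First, I would verify that both sides of the identity depend only on the restriction polymatroid $\P$ of $\M$ to $\mathcal S$ and are valuative as functions of $\P$. The right-hand side is a weighted sum over lattice points of the independence polytope $I(\P)$, since the Hall-Rado sequences are exactly these lattice points, and weighted lattice-point counts on polymatroid independence polytopes are valuative invariants by \cite{DF10}. For the left-hand side, valuativity should follow by unpacking the combinatorial construction of $K(\M)$ and $\chi(\M,-)$ in \cite{LLPP} and checking that the relevant pairing depends only on the rank function of $\M$ evaluated on unions of the sets in $\mathcal S$. With both sides valuative in $\P$, Corollary~\ref{cor:valequal} reduces the proof to the case where $\P$ is realizable over $\mathbb{C}$; choosing a generic basis for a realization of the multisymmetric lift (Definition~\ref{defn:lift}) then makes $\M$ realizable as well.

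In the realizable case, Proposition~\ref{prop:augKfeatures}(ii) identifies the left-hand side with the sheaf Euler characteristic of $\bigotimes_S \mathcal L_S^{\otimes t_S}$ on the augmented wonderful variety $W_L$. This line bundle is the pullback of $\bigotimes_S \mathcal O(t_S)$ under the map $W_L \to \prod_S \mathbb{P}(\kk^S \oplus \kk)$, so it is semiample for $t_S \geq 0$. A vanishing theorem then reduces $\chi$ to $h^0$, which equals the Hilbert polynomial of the scheme-theoretic image; by Brion's flat degeneration \cite{BrionMultiplicity} this image has the same Hilbert polynomial as $Y_\P$. A Mayer-Vietoris computation on the cover $Y_\P = \bigcup_{\mathbf b \in B(\P) \cap \mathbb{Z}^m} L_1(b_1) \times \cdots \times L_m(b_m)$, combined with the binomial identity $t^{(k)} = t^{[k]} - t^{[k-1]}$ from Notation~\ref{notate:()[]}, should reorganize the Hilbert polynomial of $Y_\P$ into the sum $\sum_{\mathbf k \text{ HR}} \prod_S t_S^{(k_S)}$. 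Since both sides are polynomials in the $t_S$ (the Snapper property), polynomial interpolation extends the identity from the nonnegative integers to all integer values.

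The hard part will be establishing valuativity of the left-hand side, since $\chi(\M,-)$ and the line bundles $\mathcal L_S$ are defined rather indirectly via the augmented Bergman fan, and it takes combinatorial care to see that the pairing restricted to the generators indexed by $\mathcal S$ depends only on the restriction polymatroid. A secondary challenge is the Mayer-Vietoris bookkeeping on $Y_\P$ that identifies the coefficients as Hall-Rado-indexed products of $t_S^{(k_S)}$. Once these pieces are in place, the formula follows by matching the realizable computation with the valuativity extension.
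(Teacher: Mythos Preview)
The paper does not prove this proposition at all: it is quoted verbatim from \cite[Corollary~7.5]{LLPP} and used as a black box. So there is no argument in the paper to compare your proposal against.

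Your proposal is thus an attempt at an independent proof, and its overall shape is reasonable, but it inverts the logical flow of the present paper in a way that creates real circularity. You want to (i) reduce to the multisymmetric lift by showing the left-hand side depends only on the restriction polymatroid, (ii) reduce to the realizable case by valuativity, and (iii) compute there via Brion's degeneration to $Y_\P$. But step (i) is precisely Corollary~\ref{cor:indepofM}, which in this paper is \emph{deduced from} Proposition~\ref{prop:KHallRado}. Your justification (``unpacking the combinatorial construction of $K(\M)$ and $\chi(\M,-)$ in \cite{LLPP}'') is not a proof; and if you are willing to dig into \cite{LLPP} at that level, you may as well cite their Corollary~7.5 directly. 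Likewise, the paper's route to valuativity (Proposition~\ref{prop:eulervaluative}) and its realizable computation (Proposition~\ref{prop:realizablecase}) both sit downstream of the formula you are trying to establish, so you cannot invoke them as written.

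Even setting the circularity aside, the realizable step has a genuine gap. You write that ``a vanishing theorem then reduces $\chi$ to $h^0$, which equals the Hilbert polynomial of the scheme-theoretic image.'' The map $W_L \to Y$ is only birational, not an isomorphism, so equality of Euler characteristics on $W_L$ and on $Y$ requires $Rp_*\mathcal O_{W_L} = \mathcal O_Y$; this is exactly where the paper (in Proposition~\ref{prop:realizablecase}) invokes that $Y$ has rational singularities via \cite{BergetFink}, and you do not mention it. You also need that $Y$ is multiplicity-free to apply Brion. Finally, the Mayer--Vietoris reorganization of $\chi(Y_\P,\mathcal O(\mathbf t))$ into $\sum_{\mathbf k\in I(\P)}\mathbf t^{(\mathbf k)}$ is a genuine combinatorial identity (equivalent to matching Proposition~\ref{prop:KnutsonSplitting} with the Hall--Rado count through the relation $t^{(k)}=t^{[k]}-t^{[k-1]}$), and you have not carried it out.
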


In particular, if $\mathcal{L}$ is a line bundle which is the tensor product of line bundles of the form $\mathcal{L}_{S_i}$ for some subsets $S_1, \dotsc, S_k$ of the ground set of $\M$, then $\chi(\M, \mathcal{L})$ only depends on the restriction polymatroid of $S_1, \dotsc, S_k$. 
We record this observation as the following corollary, which will allow us to reduce the proof of Theorem~\ref{thm:projection} to the case when $\M$ is the multisymmetric lift of $\P$. 

\begin{corollary}\label{cor:indepofM}
Let $\M_1$ and $\M_2$ be matroids, let $S_1, \dotsc, S_m$ be subsets of the ground set of $\M_1$, and let $T_1, \dotsc, T_m$ be subsets of the ground set of $\M_2$. Suppose that the restriction polymatroid of $\M_1$ to $S_1, \dotsc, S_m$ is the same as the restriction polymatroid of $\M_2$ to $T_1, \dotsc, T_m$.
Then, for any $k_1, \dotsc, k_m$,
$$\chi(\M_1, \mathcal{L}_{{S_1}}^{\otimes k_1} \otimes \dotsb \otimes \mathcal{L}_{{S_m}}^{\otimes k_m}) = \chi(\M_2, \mathcal{L}_{{T_1}}^{\otimes k_1} \otimes \dotsb \otimes \mathcal{L}_{{T_m}}^{\otimes k_m}).$$
\end{corollary}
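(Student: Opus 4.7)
The plan is to read off the result directly from the Hall--Rado formula of Proposition~\ref{prop:KHallRado}. That formula expresses $\chi(\M, \bigotimes_{i=1}^{m} \mathcal{L}_{S_i}^{\otimes t_i})$ as a polynomial
\[
\sum_{\mathbf{j}} \prod_{i=1}^{m} t_i^{(j_i)}
\]
in $t_1, \dotsc, t_m$, summed over tuples $\mathbf{j} = (j_1, \dotsc, j_m) \in \mathbb{Z}_{\geq 0}^m$ satisfying the Hall--Rado condition $\operatorname{rk}_\M(\bigcup_{i \in I} S_i) \geq \sum_{i \in I} j_i$ for every $I \subseteq [m]$. The crucial point is that this condition is precisely $\mathbf{j} \in I(\P)$, where $\P$ is the restriction polymatroid of $(S_1, \dotsc, S_m)$ in $\M$---exactly the reformulation observed in the exposition immediately preceding Proposition~\ref{prop:KHallRado}.

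Applying this formula to both $(\M_1, (S_i))$ and $(\M_2, (T_i))$, the two polynomials in $(t_1, \dotsc, t_m)$ are sums indexed by the lattice points of the independence polytopes of their respective restriction polymatroids. Since these polymatroids coincide by hypothesis, the two polynomials are identical as elements of $\mathbb{Z}[t_1, \dotsc, t_m]$. Specializing $(t_1, \dotsc, t_m) = (k_1, \dotsc, k_m)$ then yields the claimed equality of Euler characteristics.

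The only mild subtlety is that Proposition~\ref{prop:KHallRado} is phrased for a collection $\mathcal{S}$ of nonempty subsets, which one could read as requiring the $S_i$ to be distinct. This is harmless: one may first merge duplicate subsets into a single factor $\mathcal{L}_S^{\otimes \sum_{i: S_i = S} t_i}$ before applying the formula, or equivalently observe that the proposition extends verbatim to indexed tuples since its statement is purely a rank condition. I expect no substantive obstacle; the corollary is a direct consequence of Proposition~\ref{prop:KHallRado}, and its role in what follows is simply to make precise that the Hall--Rado formula depends on the data $(\M, S_1, \dotsc, S_m)$ only through the associated restriction polymatroid.
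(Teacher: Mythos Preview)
Your argument is correct and is exactly the paper's approach: the corollary is stated immediately after the observation that the Hall--Rado condition defining the index set in Proposition~\ref{prop:KHallRado} depends only on the restriction polymatroid, and the paper simply records this as a corollary without further proof. Your handling of the potential duplicate-subsets issue is also fine and matches how the formula is used throughout.
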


Another crucial feature of the Snapper polynomials of matroids is their valuativity, which will allow us to reduce Theorem~\ref{thm:projection} to the case of realizable polymatroids.

\begin{proposition}\label{prop:eulervaluative}
Let $\textbf{a} = (a_1, \dotsc, a_m)$ and $\textbf{k} = (k_1, \dotsc, k_m)$ be sequence of integers, with $a_i \ge 0$. For a polymatroid $P$ with cage \textbf{a}, let $\M$ be its multisymmetric lift with distinguished partition $S_1 \sqcup \dotsb \sqcup S_m$ of its ground set.  
Then the function which assigns to a polymatroid $\P$ with cage \textbf{a} the quantity $\chi(\M, \mathcal{L}_{{S_1}}^{\otimes k_1} \otimes \dotsb \otimes \mathcal{L}_{S_m}^{\otimes k_m})$ is valuative. 
\end{proposition}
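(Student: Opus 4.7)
The plan is to apply the explicit lattice-point formula of Proposition~\ref{prop:KHallRado}, identify the resulting sum as a weighted lattice-point count on the independence polytope $I(\P)$, and then invoke standard valuativity of such counts.

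First, I would apply Proposition~\ref{prop:KHallRado} with $\mathcal{S} = \{S_1, \dotsc, S_m\}$ and the substitution $t_{S_i} = k_i$, obtaining
\[
\chi\bigl(\M, \mathcal{L}_{S_1}^{\otimes k_1} \otimes \dotsb \otimes \mathcal{L}_{S_m}^{\otimes k_m}\bigr) \;=\; \sum_{\mathbf{j} \text{ Hall--Rado}} \prod_{i=1}^m k_i^{(j_i)}.
\]
By the defining property of the multisymmetric lift, $\rk_\M\bigl(\bigcup_{i \in I} S_i\bigr) = \rk_\P(I)$, so the Hall--Rado condition translates exactly to $\rk_\P(I) \geq \sum_{i \in I} j_i$ for all $I \subseteq [m]$, i.e., to $\mathbf{j} \in I(\P) \cap \ZZ_{\geq 0}^m$. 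Since each $j_i \leq \rk_\P(\{i\}) \leq a_i$, the sum is finite with index set uniformly bounded by $\textbf{a}$ and independent of $\P$. It will therefore suffice to show, for each fixed $\mathbf{j} \in \ZZ_{\geq 0}^m$ with $\mathbf{j} \leq \textbf{a}$, that the indicator $\P \mapsto \mathbf{1}_{I(\P)}(\mathbf{j})$ is valuative on polymatroids with cage $\textbf{a}$.

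This last claim is the main technical step, and I would obtain it from the polymatroid valuation theory of \cite{DF10} and \cite{ELPoly}: the assignment $\mathbf{1}_{B(\P)} \mapsto \mathbf{1}_{I(\P)}$ factors through a well-defined homomorphism on $\operatorname{Val}_{\textbf{a}}$. Concretely, I would first stratify by rank $r = \rk_\P([m])$: base polytopes of distinct ranks lie on disjoint parallel hyperplanes $\{\sum x_i = r\}$, so the rank is itself a valuative invariant, and any linear relation among the $\mathbf{1}_{B(\P_i)}$ splits into relations on polymatroids of a common rank. Within each rank stratum, I would employ the ``coloop extension'' $\widehat\P$ on $[m+1]$ defined by $\rk_{\widehat\P}(I) = \rk_\P(I \cap [m])$ for $m+1 \notin I$ and $\rk_{\widehat\P}(I) = r$ otherwise, whose base polytope projects lattice-point-bijectively onto $I(\P)$ under $\RR^{m+1} \to \RR^m$; checking that this extension sends valuative identities among the $\mathbf{1}_{B(\P_i)}$ to valuative identities among the $\mathbf{1}_{B(\widehat{\P}_i)}$ (now with cage $(\textbf{a}, r)$) would then yield valuativity of $\mathbf{1}_{I(\P)}$ after restricting to the affine slice $\{x_{m+1} = r - \sum x_i\}$. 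The main obstacle will be this last verification — carefully matching cage conventions and projection arguments so that the extension $\P \mapsto \widehat\P$ respects valuative identities — after which the finite linear combination formula above immediately gives the claimed valuativity of $\P \mapsto \chi(\M,\mathcal{L}_{S_1}^{\otimes k_1} \otimes \dotsb \otimes \mathcal{L}_{S_m}^{\otimes k_m})$.
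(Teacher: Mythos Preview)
Your approach is different from the paper's and would work once the acknowledged gap is filled, but the sketch you give for that gap does not actually close it. The paper's proof is a two-line composition of citations: by \cite[Lemma~3.2]{ELPoly} the multisymmetric lift $\P \mapsto \M$ is itself valuative as a map into the valuative group of matroids on the fixed ground set, and by \cite[Lemma~6.4]{LLPP} the map $\M \mapsto \chi(\M, \mathcal{L}_{S_1}^{\otimes k_1} \otimes \dotsb \otimes \mathcal{L}_{S_m}^{\otimes k_m})$ is a valuative invariant of matroids; composing these gives the result. Your route via Proposition~\ref{prop:KHallRado} and the independence polytope avoids the matroid-level valuativity of $\chi$ entirely, at the cost of needing valuativity of $\P \mapsto \mathbf{1}_{I(\P)}(\mathbf{j})$ for each fixed $\mathbf{j}$.

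The problem is that your coloop-extension sketch is circular: since $B(\widehat\P)$ is an affine-linear image of $I(\P)$ inside the hyperplane $\{\sum x_i = r\}$, showing that $\P \mapsto \mathbf{1}_{B(\widehat\P)}$ respects valuative relations is exactly the same statement as showing $\P \mapsto \mathbf{1}_{I(\P)}$ does; the extension does not reduce anything. A direct argument that does work: intersecting indicator functions with the fixed cone $C_{\mathbf{j}} = \mathbf{j} + \RR^m_{\geq 0}$ is linear, so a relation $\sum c_i \mathbf{1}_{B(\P_i)} = 0$ yields $\sum c_i \mathbf{1}_{B(\P_i) \cap C_{\mathbf{j}}} = 0$; now apply the Euler-characteristic valuation on the polytope algebra (which sends each nonempty compact convex set to $1$ and $\emptyset$ to $0$) to obtain $\sum c_i \mathbf{1}_{\mathbf{j} \in I(\P_i)} = 0$, using that $\mathbf{j} \in I(\P)$ if and only if $B(\P) \cap C_{\mathbf{j}} \neq \emptyset$. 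With this fix your argument goes through; the paper's proof is shorter only because it outsources the work to two lemmas already in the literature.
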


\begin{proof}
By \cite[Lemma 3.2]{ELPoly}, the function that sends a polymatroid of cage \textbf{a} to the class of its multisymmetric lift in the valuative group of matroids is valuative. By \cite[Lemma 6.4]{LLPP}, for fixed $S_i$ and $k_j$, the function that sends a matroid $\M$ to $\chi(\M, \mathcal{L}_{{S_1}}^{\otimes k_1} \otimes \dotsb \otimes \mathcal{L}_{S_m}^{\otimes k_m})$ is a valuative invariant of matroids. Putting these together implies the result. 
\end{proof}

\subsection{Multiplicity-free subvarieties and the proof of Theorem~\ref{thm:projection}}\label{ssec:proofprojection}
An integral subvariety $X$ of a product of projective spaces $\mathbb{P}^{a_1} \times \dotsb \times \mathbb{P}^{a_m}$ is said to be \emph{multiplicity-free} if the intersection number of any monomial in the hyperplane classes of the factors with the fundamental class of $X$ is either $0$ or $1$.
By \cite[Corollary 4.7]{BH}, the function $\rk_\P \colon  2^{[m]} \to \ZZ$ defined by
\[
\rk_\P(I) = \dim \big(\text{image of $X$ under the projection to $\prod_{i\in I} \PP^{a_i}$}\big)
\]
is a polymatroid $\P$, which we refer to as the polymatroid of $X$.
The $K$-class of the structure sheaf $[\mathcal{O}_{X}] \in K(\mathbb{P}^{a_1} \times \dotsb \times \mathbb{P}^{a_m})$ is then determined by the following theorem. 

\begin{proposition}\cite{BrionMultiplicity}\label{prop:brion}
There is a flat degeneration of $X$ to $Y_\P$ inside of $\mathbb{P}^{a_1} \times \dotsb \times \mathbb{P}^{a_m}$. In particular, $[\mathcal{O}_X] = [\mathcal{O}_{Y_\P}]$. 
\end{proposition}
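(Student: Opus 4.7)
The plan is to realize the degeneration via a Gr\"obner-style limit under a generic one-parameter subgroup of the maximal torus $T$ acting on $\PP^{a_1} \times \dotsb \times \PP^{a_m}$. Fix a generic cocharacter $\lambda \colon \CC^* \to T$, and form the flat limit $X_0 := \lim_{t \to 0} \lambda(t) \cdot X$ inside the product of projective spaces. By construction, $X_0$ is $T$-invariant and flat over $\mathbb{A}^1$, so $[\mathcal{O}_{X_0}] = [\mathcal{O}_X]$ in the Grothendieck ring.

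Next, I would identify the support of $X_0$. Since $X_0$ is $T$-invariant, each of its irreducible components is a $T$-orbit closure, necessarily of the form $L_1(b_1) \times \dotsb \times L_m(b_m)$ for some $(b_1, \dotsc, b_m) \in \ZZ_{\ge 0}^m$. The multidegree of $X$ is preserved under the flat degeneration, and by \cite[Corollary 4.7]{BH} its support is indexed precisely by the lattice points of $B(\P)$. Combined with the multiplicity-free hypothesis that every coefficient of the multidegree is $0$ or $1$, this forces $X_{0,\mathrm{red}} = Y_\P$ with the correct multiplicities on top-dimensional components.

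The main obstacle is upgrading this to the scheme-theoretic equality $X_0 = Y_\P$, i.e., ruling out nonreduced or embedded structure in $X_0$. I would handle this by reducing modulo a large prime $p$ and invoking that $Y_\P$ is compatibly Frobenius split inside $\PP^{a_1} \times \dotsb \times \PP^{a_m}$ (see \cite{BrionKumar}, which is used later in the paper). A compatibly split subscheme is automatically reduced, and moreover any flat $T$-invariant limit whose support lies in $Y_\P$ and whose class equals $[\mathcal{O}_{Y_\P}]$ must coincide with $Y_\P$ scheme-theoretically. Spreading out then recovers the characteristic-zero statement; alternatively, one may invoke the rational singularities result \cite[Theorem 4.3]{BergetFink} together with Cohen--Macaulayness of $Y_\P$ \cite{CCRC} to exclude embedded structure directly.

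Once $X_0 = Y_\P$ as schemes, the flatness of the family over $\mathbb{A}^1$ immediately yields $[\mathcal{O}_X] = [\mathcal{O}_{Y_\P}]$ in $K(\PP^{a_1} \times \dotsb \times \PP^{a_m})$, proving the proposition.
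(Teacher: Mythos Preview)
The paper does not prove this proposition: it is quoted from Brion \cite{BrionMultiplicity}, and the text following the statement only explains why the second assertion (equality of $K$-classes) follows from the first (existence of the flat degeneration), via constancy of Euler characteristics in proper flat families together with nondegeneracy of the Euler pairing on $K(\PP^{a_1}\times\dotsb\times\PP^{a_m})$. Your proposal is therefore an attempt to reprove Brion's theorem, not to match anything argued in the paper.

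Your outline follows the standard strategy, but the decisive step---showing that the flat limit $X_0$ is \emph{reduced}---has a genuine gap. The Frobenius-splitting argument you sketch is circular: knowing that $Y_\P$ is compatibly split in the ambient product tells you that $Y_\P$ is reduced, but it says nothing about the a~priori different subscheme $X_0$; and the clause ``whose class equals $[\mathcal{O}_{Y_\P}]$'' that you then invoke is precisely the conclusion to be proved. The alternative you offer via rational singularities of $X$ and Cohen--Macaulayness of $Y_\P$ does not work either: these are properties of the general fiber and of the \emph{reduced} special fiber, respectively, and neither prevents $X_0$ from acquiring embedded or nilpotent structure under degeneration. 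In Brion's argument this reducedness is the heart of the matter and requires substantive work (for instance, producing a splitting of the \emph{total space} of the degeneration compatible with the special fiber); your sketch names some of the right ingredients but does not assemble them into a proof. A smaller point: the $T$-orbit closures in $\prod_i \PP^{a_i}$ are arbitrary products of coordinate subspaces, not necessarily the specific flags $L_i(b_i)$ fixed in the paper, so the one-parameter subgroup must be chosen compatibly with that flag for the limit to be $Y_\P$ itself rather than a $T$-translate of it.
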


The second statement follows from the first because the pairing 
$$K(\mathbb{P}^{a_1} \times \dotsb \times \mathbb{P}^{a_m}) \times K(\mathbb{P}^{a_1} \times \dotsb \times \mathbb{P}^{a_m}) \to \mathbb{Z} \text{ given by }(a, b) \mapsto \chi(\mathbb{P}^{a_1} \times \dotsb \times \mathbb{P}^{a_m}, ab)$$ is nondegenerate, and Euler characteristics are locally constant in proper flat families. This implies that the class of a subvariety in $K(\mathbb{P}^{a_1} \times \dotsb \times \mathbb{P}^{a_m})$ is locally constant in proper flat families.  

\medskip
We now state a formula for $[\mathcal{O}_{Y_\P}] \in K(\mathbb{P}^{a_1} \times \dotsb \times \mathbb{P}^{a_m})$. This formula originates in the work of Knutson, who studied the more general problem of calculating the $K$-class of a reduced union of Schubert varieties inside a homogeneous space. He showed that one can compute the $K$-class in terms of M\"{o}bius inversion on the poset of Schubert varieties. The special case of products of projective spaces was also proven in \cite[Theorem 7.12]{CCMM}.
For each tuple $\textbf{b} = (b_1, \dotsc, b_m)$ with $b_i \le a_i$, let $Y_{\textbf{b}}$ be a $\mathbb{P}^{b_1} \times \dotsb \times \mathbb{P}^{b_m}$ embedded linearly into $\mathbb{P}^{a_1} \times \dotsb \times \mathbb{P}^{a_m}$; the class $[\mathcal{O}_{Y_{\textbf{b}}}]$ does not depend on the choice of an embedding. The classes $\{[\mathcal{O}_{Y_{\textbf{b}}}]\}$ form a basis for $K(\mathbb{P}^{a_1} \times \dotsb \times \mathbb{P}^{a_m})$.

\begin{proposition}\label{prop:KnutsonSplitting} \cite{KnutsonSplitting}
Write $[\mathcal{O}_{Y_\P}] = \sum_{\textbf{b}} c_{\textbf{b}}[\mathcal{O}_{Y_{\textbf{b}}}]$. If $\sum b_i > \operatorname{rk}(\P)$, then $c_{\textbf{b}} = 0$. If $\sum b_i = \operatorname{rk}(\P)$, then 
$$c_{\textbf{b}} = \begin{cases} 1 & \text{if } \textbf b \in B(\P) \\ 0 & \text{otherwise}.\end{cases}$$
If $\sum b_i < \operatorname{rk}(\P)$, then $c_{\textbf{b}} = 1 -\sum_{\textbf{b}' > \textbf{b}} c_{\textbf{b}'}$.
\end{proposition}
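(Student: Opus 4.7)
The plan is to derive the formula via Mayer--Vietoris inclusion-exclusion in $K$-theory applied to the reduced union $Y_\P = \bigcup_{\textbf{b} \in B(\P)} Y_{\textbf{b}}$ and then collect terms. Each $Y_{\textbf{b}}$ is cut out from $\PP^{a_1} \times \dotsb \times \PP^{a_m}$ by the squarefree monomial ideal generated by the multihomogeneous coordinates $x_{i,j}$ with $j > b_i$. Scheme-theoretic intersections and unions of such subvarieties are therefore again cut out by squarefree monomial ideals, hence are reduced; in particular $\bigcap_{\textbf{b} \in S} Y_{\textbf{b}} = Y_{\min S}$ with $\min S$ the componentwise minimum. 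Iterating the Mayer--Vietoris short exact sequence $0 \to \mathcal{O}_{A \cup B} \to \mathcal{O}_A \oplus \mathcal{O}_B \to \mathcal{O}_{A \cap B} \to 0$ then yields
\[
[\mathcal{O}_{Y_\P}] = \sum_{\emptyset \neq S \subseteq B(\P)} (-1)^{|S|+1} [\mathcal{O}_{Y_{\min S}}].
\]

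Grouping by $\textbf{c} = \min S$ gives the closed-form expression $c_{\textbf{c}} = \sum_{S \subseteq B(\P),\, \min S = \textbf{c}} (-1)^{|S|+1}$, from which the three conditions can be read off. If $\sum c_i > \operatorname{rk}(\P)$, then no $\textbf{b} \in B(\P)$ satisfies $\textbf{b} \ge \textbf{c}$ (since $\sum b_i = \operatorname{rk}(\P)$ for $\textbf{b} \in B(\P)$), so the sum is empty and $c_{\textbf{c}} = 0$. If $\sum c_i = \operatorname{rk}(\P)$, then any $\textbf{b} \in S$ with $\textbf{b} \ge \textbf{c}$ must equal $\textbf{c}$, so only $S = \{\textbf{c}\}$ with $\textbf{c} \in B(\P)$ contributes. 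For the recursive formula, swap the order of summation to obtain
\[
\sum_{\textbf{c}' \ge \textbf{c}} c_{\textbf{c}'} = \sum_{\emptyset \neq S \subseteq T_{\textbf{c}}} (-1)^{|S|+1},
\]
where $T_{\textbf{c}} = \{\textbf{b} \in B(\P) : \textbf{b} \ge \textbf{c}\}$, and apply the standard identity $\sum_{S \subseteq T}(-1)^{|S|} = 0$ for $T \neq \emptyset$ to conclude that the right side equals $1$ whenever $T_{\textbf{c}} \neq \emptyset$. This is precisely the recursion $c_{\textbf{c}} = 1 - \sum_{\textbf{c}' > \textbf{c}} c_{\textbf{c}'}$.

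The principal obstacle is justifying the iterated Mayer--Vietoris step, namely verifying that the scheme-theoretic union of the $Y_{\textbf{b}}$'s is reduced and therefore coincides with $Y_\P$. This follows because intersections of squarefree monomial ideals remain radical, which is the multiprojective analog of the Stanley--Reisner correspondence. Alternatively, one could bypass this verification by invoking Knutson's general theorem \cite{KnutsonSplitting} directly, using that $Y_\P$ is a compatibly Frobenius split subvariety of $\PP^{a_1} \times \dotsb \times \PP^{a_m}$ (a fact also invoked in Section~\ref{sec:4}).
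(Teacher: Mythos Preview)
The paper does not supply its own proof of this proposition; it is quoted from \cite{KnutsonSplitting}, with the product-of-projective-spaces case also attributed to \cite[Theorem~7.12]{CCMM}. Your argument is therefore not competing with a proof in the paper but rather filling in a cited result, and it does so correctly and elementarily. The Mayer--Vietoris step is sound: since each $Y_{\textbf b}$ is defined by a squarefree monomial ideal in the multigraded Cox ring, all scheme-theoretic intersections and unions among the $Y_{\textbf b}$ remain reduced, the identity $\bigcap_{\textbf b \in S} Y_{\textbf b} = Y_{\min S}$ holds scheme-theoretically, and the iterated short exact sequence yields the inclusion--exclusion formula you state. Your regrouping to the closed form $c_{\textbf c} = \sum_{\min S = \textbf c}(-1)^{|S|+1}$ and the deductions of the three cases are all correct.

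One refinement worth noting: your derivation of the recursion $c_{\textbf c} = 1 - \sum_{\textbf c' > \textbf c} c_{\textbf c'}$ is valid precisely when $T_{\textbf c} \neq \emptyset$, i.e., when $\textbf c$ lies in the independence polytope $I(\P)$. This is in fact the correct hypothesis; the proposition's literal condition ``$\sum b_i < \operatorname{rk}(\P)$'' is slightly too permissive. For instance, take $\P$ on $\{1,2\}$ with $\operatorname{rk}_\P(1)=0$, $\operatorname{rk}_\P(2)=\operatorname{rk}_\P(\{1,2\})=2$ and cage $(1,2)$: then $Y_\P = Y_{(0,2)}$, so $c_{(1,0)}=0$, yet the recursion as stated would give $c_{(1,0)} = 1$. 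Your closed form handles this correctly, giving $c_{\textbf c}=0$ whenever $\textbf c \notin I(\P)$, and this is all that is needed for the downstream use of the proposition (the valuativity argument in Proposition~\ref{prop:structuresheafvaluative}).
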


\begin{proposition}\label{prop:structuresheafvaluative}
The function which assigns a polymatroid $\P$ with cage $(a_1, \dotsc, a_m)$ to $[\mathcal{O}_{Y_\P}] \in K(\mathbb{P}^{a_1} \times \dotsb \times \mathbb{P}^{a_m})$ is valuative. 
\end{proposition}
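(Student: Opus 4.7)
The plan is to use the Knutson splitting formula of Proposition~\ref{prop:KnutsonSplitting} to reduce valuativity of $\P \mapsto [\mathcal{O}_{Y_\P}]$ to valuativity of each coefficient $c_{\textbf{b}}$ in the expansion $[\mathcal{O}_{Y_\P}] = \sum_{\textbf{b}} c_{\textbf{b}}(\P)\,[\mathcal{O}_{Y_{\textbf{b}}}]$ in the basis $\{[\mathcal{O}_{Y_{\textbf{b}}}]\}$ of $K(\PP^{a_1}\times\dotsm\times\PP^{a_m})$. Since an abelian-group-valued map is valuative if and only if each coordinate is, this fully decouples the problem.

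A first observation is that base polytopes of polymatroids of distinct ranks lie in distinct affine hyperplanes $\{\sum x_i = r\}$, so any identity $\sum_j e_j \mathbf{1}_{B(\P_j)} = 0$ decomposes into identities among polymatroids of a common rank. It therefore suffices to verify that each $c_{\textbf{b}}$ is valuative on polymatroids of cage $\textbf{a}$ and some fixed rank $r$. I would then split into cases based on $\sum b_i$ versus $r$. The case $\sum b_i > r$ is immediate since $c_{\textbf{b}} \equiv 0$, and the top-degree case $\sum b_i = r$ gives $c_{\textbf{b}}(\P) = \mathbf{1}_{B(\P)}(\textbf{b})$, the evaluation of the indicator function at a fixed lattice point, which is manifestly valuative.

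For the range $\sum b_i < r$, I would run downward induction on $\sum b_i$ using the recursion $c_{\textbf{b}}(\P) = 1 - \sum_{\textbf{b}' > \textbf{b}} c_{\textbf{b}'}(\P)$ provided by Proposition~\ref{prop:KnutsonSplitting}. The sum on the right is a linear combination of coefficients known to be valuative by the inductive hypothesis, so the only remaining input is to show that the constant function $\P \mapsto 1$ is valuative on rank-$r$ polymatroids of cage $\textbf{a}$; this is the step I expect to be the main subtlety. It follows from the classical fact that Euler characteristic is a valuation on polytopes: pairing the identity $\sum_j e_j \mathbf{1}_{B(\P_j)} = 0$ against the Euler measure $d\chi$ yields $\sum_j e_j \chi(B(\P_j)) = \sum_j e_j = 0$, since each base polytope $B(\P_j)$ is a nonempty convex polytope and so has Euler characteristic $1$. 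This closes the induction and establishes valuativity.
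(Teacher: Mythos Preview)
Your argument is correct and follows essentially the same route as the paper: expand $[\mathcal{O}_{Y_\P}]$ in the basis $\{[\mathcal{O}_{Y_{\textbf{b}}}]\}$ and show each coefficient $c_{\textbf{b}}$ is valuative via downward induction using the recursion of Proposition~\ref{prop:KnutsonSplitting}. The paper's proof is terser and in fact writes the recursion as $c_{\textbf{b}} = -\sum_{\textbf{b}'>\textbf{b}} c_{\textbf{b}'}$, silently dropping the constant $1$; your rank-separation observation together with the Euler-characteristic argument makes precise why that constant causes no trouble, namely that the constant function $\P\mapsto 1$ is itself valuative on polymatroids of a fixed rank. So your proposal is the paper's proof with that detail filled in rather than a genuinely different approach.
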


\begin{proof}
We show that, for every $\textbf{b} = (b_1, \dotsc, b_m)$ with $b_i \le a_i$, the function assigns a polymatroid $\P$ with cage $(a_1, \dotsc, a_m)$ to $c_{\textbf{b}}$ is valuative. This is clear if $\sum b_i \ge \operatorname{rk}(\P)$. The recursive formula $c_{\textbf{b}} = -\sum_{\textbf{b}' > \textbf{b}} c_{\textbf{b}'}$ then implies that it holds in general. 
\end{proof}

We first prove Theorem~\ref{thm:projection} in the case when $\P$ is realizable over $\mathbb{C}$. 

\begin{proposition}\label{prop:realizablecase}
Let $V_1, \dotsc, V_m$ be vector spaces over $\mathbb{C}$ of dimension $a_1, \dotsc, a_m$, and let $L \subseteq V_1 \oplus \dotsb \oplus V_m$ be a realization of a polymatroid $\P$ with cage $(a_1, \dotsc, a_m)$. Let $\M$ be the multisymmetric lift of $\P$, whose ground set is equipped with the distinguished partition $S_1 \sqcup \dotsb \sqcup S_m$. Let $W_L$ be the augmented wonderful variety of a realization of $\M$.  Then, for any $(k_1, \dotsc, k_m)$,
$$\chi(\M, \mathcal{L}_{{S_1}}^{\otimes k_1} \otimes \dotsb \otimes \mathcal{L}_{{S_m}}^{\otimes k_m}) = \chi(Y_\P, \mathcal{O}(k_1, \dotsc, k_m)).$$
\end{proposition}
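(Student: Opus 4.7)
The plan is to identify $\chi(\M, -)$ with a sheaf Euler characteristic on the augmented wonderful variety $W_L$, to push down along a projection to a multiplicity-free subvariety of $\PP^{a_1}\times\cdots\times\PP^{a_m}$, and then to transport the result to $Y_\P$ via Brion's flat degeneration (Proposition~\ref{prop:brion}). Since $\CC$ is infinite, the multisymmetric lift $\M$ admits a realization in which the realizing subspace is $L \subseteq \bigoplus_i V_i$ itself, by a generic choice of bases as recalled at the end of Section~\ref{ssec:polymat}. Proposition~\ref{prop:augKfeatures}(ii) then lets me rewrite the left-hand side as $\chi(W_L, \mathcal L_{S_1}^{\otimes k_1}\otimes\cdots\otimes\mathcal L_{S_m}^{\otimes k_m})$.

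Next, I would consider the projection
\[
\pi\colon W_L \longrightarrow \PP(V_1\oplus\CC)\times\cdots\times\PP(V_m\oplus\CC) = \PP^{a_1}\times\cdots\times\PP^{a_m}
\]
and set $X \coloneqq \pi(W_L)$ with its reduced scheme structure. Because each $\mathcal L_{S_i}$ is by construction pulled back from $\PP(\kk^{S_i}\oplus\kk)$, the tensor product $\mathcal L_{S_1}^{\otimes k_1}\otimes\cdots\otimes\mathcal L_{S_m}^{\otimes k_m}$ equals $\pi^*\mathcal O(k_1,\ldots,k_m)$. Since both $W_L$ and $X$ contain $L$ as a dense open subset that maps isomorphically onto itself, the induced morphism $\pi\colon W_L \to X$ is proper and birational; moreover, $X$ is irreducible and multiplicity-free with polymatroid $\P$, as noted in the discussion following Theorem~\ref{thm:projection}.

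The final step is a projection-formula argument. By \cite[Theorem 4.3]{BergetFink}, $X$ has rational singularities; combined with the smoothness of $W_L$, this yields $R\pi_*\mathcal O_{W_L} = \mathcal O_X$, and the projection formula then gives
\[
\chi(W_L, \pi^*\mathcal O(k_1,\ldots,k_m)) = \chi(X, \mathcal O(k_1,\ldots,k_m)).
\]
Finally, Proposition~\ref{prop:brion} provides $[\mathcal O_X] = [\mathcal O_{Y_\P}]$ in $K(\PP^{a_1}\times\cdots\times\PP^{a_m})$, so the right-hand side equals $\chi(Y_\P, \mathcal O(k_1,\ldots,k_m))$. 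The main obstacle is the rational singularities input from Berget--Fink; once that is granted, the remainder is formal manipulation of Euler characteristics, and this step is also where the hypothesis $\kk = \CC$ is used.
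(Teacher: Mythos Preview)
Your argument is correct and essentially identical to the paper's own proof: you project $W_L$ to the product of projective spaces, use \cite[Theorem 4.3]{BergetFink} to get rational singularities of the image so that $R\pi_*\mathcal O_{W_L}=\mathcal O_X$, apply the projection formula, and invoke Brion's degeneration (Proposition~\ref{prop:brion}) to pass from $X$ to $Y_\P$. The only cosmetic difference is that the paper cites \cite{LiRatMap} or \cite[Corollary 1.4]{ELPoly} directly for the fact that $X$ is multiplicity-free with polymatroid $\P$, whereas you appeal to the discussion in the introduction.
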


\begin{proof}
Let $Y$ be the image of $W_L$ under the projection $p$ to $\mathbb{P}^{a_1} \times \dotsb \times \mathbb{P}^{a_m}$, or, equivalently, $Y$ is the closure of $L$ inside $\PP(V_1 \oplus \kk) \times \dotsb \times \PP(V_m \oplus \kk)$.
As $W_L$ is also a compactification of $L$, the map $W_L \to Y$ is birational. 
By \cite[Theorem 4.3]{BergetFink}, which is based on \cite[Theorem 5]{BrionOrbitClosure}, $Y$ has rational singularities.
As $W_L$ is smooth, we  have that $Rp_* \mathcal{O}_{W_L} = \mathcal{O}_Y$.
By the projection formula, we have that
$$Rp_* (\mathcal{L}_{{S_1}}^{\otimes k_1} \otimes \dotsb \otimes \mathcal{L}_{{S_m}}^{\otimes k_m}) =  \mathcal{O}(k_1, \dotsc, k_m).$$ 
Because $\chi(\M, -)$ agrees with $\chi(W_L, -)$, we have that
$$\chi(\M, \mathcal{L}_{{S_1}}^{\otimes k_1} \otimes \dotsb \otimes \mathcal{L}_{{S_m}}^{\otimes k_m}) = \chi(W_L, \mathcal{L}_{{S_1}}^{\otimes k_1} \otimes \dotsb \otimes \mathcal{L}_{{S_m}}^{\otimes k_m}) = \chi(Y, \mathcal{O}(k_1, \dotsc, k_m)).$$
To conclude, we note that $Y$ is an irreducible multiplicity-free subvariety by \cite{LiRatMap} or \cite[Corollary 1.4]{ELPoly}.
By Proposition~\ref{prop:brion}, $[\mathcal{O}_{Y}] = [\mathcal{O}_{Y_\P}] \in K(\mathbb{P}^{a_1} \times \dotsb \times \mathbb{P}^{a_m})$ as $Y$ is multiplicity-free, which implies the result. 
\end{proof}

\begin{proof}[Proof of Theorem~\ref{thm:projection}]
Fix $(k_1, \dotsc, k_m)$.
We may assume $\M$ is the multisymmetric lift of $\P$ by Corollary~\ref{cor:indepofM}.
When $\P$ is realizable over $\mathbb{C}$, the statement follows from Proposition~\ref{prop:realizablecase}.
By Proposition~\ref{prop:eulervaluative}, the function that assigns a polymatroid $\P$ with cage $(a_1, \dotsc, a_m)$ to $\chi(\M, \mathcal{L}_{{S_1}}^{\otimes k_1} \otimes \dotsb \otimes \mathcal{L}_{{S_m}}^{\otimes k_m})$ is valuative, and by Proposition~\ref{prop:structuresheafvaluative}, the same is true with the function that assigns $\P$ to $\chi(Y_\P, \mathcal{O}(k_1, \dotsc, k_m))$. 
Corollary~\ref{cor:valequal} thus implies the desired equality.
\end{proof}

\subsection{Non-augmented K-rings}\label{ssec:nonaug}

We now discuss the analogue of Theorem~\ref{thm:projection} for (non-augmented) $K$-rings of matroid. This section is not used until Section~\ref{ssec:LofP}. Let $\M$ be a loopless matroid.
The (non-augmented) \emph{Bergman fan} $\underline\Sigma_\M$ of a matroid $\M$ is the star fan of a particular ray in the augmented Bergman fan $\Sigma_\M$; see \cite[Definition 5.12]{EHL} for details.
In other words, its toric variety $X_{\underline{\Sigma}_{\M}}$ is a toric divisor on $X_{\Sigma_{\M}}$.
We define the \emph{(non-augmented) $K$-ring of $\M$}, denoted $\underline{K}(\M)$, to be the $K$-ring of $X_{\underline{\Sigma}_{\M}}$. As $X_{\underline{\Sigma}_{\M}}$ is a divisor on $X_{\Sigma_{\M}}$, there is a restriction map $K(\M) \to \underline{K}(\M)$. The restriction of $[\mathcal{L}_S]$ is denoted $[\underline{\mathcal{L}}_S]$.

\smallskip
The facts about the augmented $K$-ring (Proposition~\ref{prop:augKfeatures}) have analogues for the non-augmented $K$-ring $\underline{K}(\M)$ \cite{LLPP}.  More precisely, we have:
\begin{enumerate}[label = (\roman*)]
\item $\underline K(\M)$ is equipped with an ``Euler characteristic map'' $\underline\chi(\M,-) \colon \underline K(\M) \to \ZZ$.
\item $\underline{K}(\M)$ is generated as a ring by the restrictions $[\underline{\mathcal L}_S]$ of the classes $[\mathcal{L}_S]$.
\item When $\M$ has a realization $L\subseteq \kk^{\EE}$, let $\underline W_L$ be the \emph{wonderful variety} \cite{dCP95} defined as
\[
\underline W_L = \text{the closure of the image of $\PP L$ in } \prod_{\emptyset \subsetneq S \subseteq E} \PP(\kk^S)
\]
where $\PP L \to \PP(\kk^S)$ is the projectivization of the projection $L \to \kk^S$, and let $\underline{\mathcal L}_S$ be the pullback of $\mathcal O(1)$ from $\PP(\kk^S)$.  Then, identifying the $[\underline{\mathcal L}_S]$ in $\underline K(\M)$ and $K(\underline W_L)$ gives an isomorphism $\underline K(\M) \simeq K(\underline W_L)$ such that  $\underline\chi(\M,-) = \chi(\underline W_L, -)$.
\end{enumerate}

\medskip
We also have a formula for the Euler characteristic map $\underline{\chi}(\M, -) \colon \underline{K}(\M) \to \mathbb{Z}$ analogous to Proposition~\ref{prop:KHallRado}.
We say that a sequence $(S_1, \dotsc, S_m)$ of nonempty subsets of $E$ satisfies the \emph{dragon Hall--Rado} condition (with respect to $\M$) if
\[
\operatorname{rk}_{\M} \left( \bigcup_{i \in I} S_i \right) \ge 1 + |I|\quad\text{for every $\emptyset \not = I \subseteq [m]$}. 
\]
Moreover, we say that $\mathbf k = (k_1, \dotsc, k_m) \in \ZZ_{\geq 0}^m$ satisfies the \emph{dragon Hall--Rado} condition if the sequence $(S_1^{k_1}, \dotsc, S_m^{k_m})$, where $S_i^{k_i}$ denotes $S_i$ repeated $k_i$ times, satisfies the condition, or, equivalently if
\[
\operatorname{rk}_{\M} \left( \bigcup_{i \in I} S_i \right) \ge 1 +  \sum_{i \in I} k_i \quad\text{for every $\emptyset \not= I \subseteq [m]$}. 
\]

This defines a polymatroid on $\{S : \emptyset \subsetneq S \subseteq E\}$ whose bases are the $\textbf{k}$ satisfying the dragon-Hall--Rado condition with $\sum k_S = \operatorname{rk}(\M)  - 1$. We call this the \emph{dragon-Hall--Rado polymatroid}. 
The significance of the dragon-Hall--Rado condition for us comes from the following formula for $\underline{\chi}(\M, -)$. 

\begin{proposition}\label{prop:DHR} \cite[Collary 7.5]{LLPP}
We have that 
$$\underline{\chi}(\M, \bigotimes_{S} \underline{\mathcal{L}}_{S}^{\otimes t_S}) = \sum_{\textbf{k} \text{ satisfies dragon-Hall--Rado}} \textbf{t}^{(\textbf{k})}.$$
\end{proposition}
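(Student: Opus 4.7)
The plan is to adapt the proof of the augmented analogue (Proposition~\ref{prop:KHallRado}) to the non-augmented setting, exploiting the fact that $X_{\underline\Sigma_\M} \subseteq X_{\Sigma_\M}$ is the toric divisor corresponding to a distinguished ray $\rho$ of $\Sigma_\M$.

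First, I would establish a relationship of the form
\[
\underline\chi(\M, \alpha|_D) = \chi(\M, \alpha) - \chi(\M, \alpha \otimes \mathcal{O}(-D))
\]
for $\alpha \in K(\M)$, where $D = D_\rho$. This follows from the short exact sequence $0 \to \mathcal O(-D) \to \mathcal O \to \mathcal O_D \to 0$ (after tensoring with $\alpha$), together with the compatibility between the modified Euler characteristic maps $\chi(\M,-)$ and $\underline\chi(\M,-)$ and the restriction $K(\M) \to \underline K(\M)$ that takes each $[\mathcal L_S]$ to $[\underline{\mathcal L}_S]$.

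Next, I would identify the line bundle $\mathcal O(D) \in K(\M)$ as a Laurent monomial in the generators $[\mathcal L_S]$, and substitute the formula from Proposition~\ref{prop:KHallRado} into both terms on the right-hand side. Binomial identities such as $t^{(k)} - (t-1)^{(k)} = t^{(k-1)}$ (Pascal) should then let me reorganize the resulting difference of two Hall--Rado sums into a single sum over dragon-Hall--Rado vectors, with the ``$+1$'' shift in the rank condition arising precisely from the way $\mathcal{O}(D)$ interacts with the generators.

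The main obstacle is the second stage: pinning down the divisor class $\mathcal O(D) \in K(\M)$. The naive guess $\mathcal{O}(D) = \mathcal{L}_E$ (motivated by viewing $\underline W_L$ as the ``hyperplane at infinity'' in the $E$-factor of $\prod_S \PP(\kk^S \oplus \kk)$) is not correct: substituting $t_E \to t_E-1$ would only produce a shift in Hall--Rado inequalities involving $E$, rather than the uniform $+1$ shift for every nonempty $I$ required by the dragon-Hall--Rado condition. A more subtle identification is needed---likely a combination of several $[\mathcal L_S]$'s, reflecting the wonderful compactification structure and the blowup relating $W_L$ to $\PP(L \oplus \kk) \simeq \PP^r$. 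Alternatively, one could bypass this identification altogether by imitating the valuativity strategy used for Theorem~\ref{thm:projection}: establish that both sides are valuative invariants of matroids (analogous to Proposition~\ref{prop:eulervaluative}), then verify the formula for matroids realizable over $\mathbb C$ by a direct computation on $\underline W_L$, for instance by pushing forward along a projection to a product of projective spaces and invoking cohomology vanishing.
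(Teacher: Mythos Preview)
The paper does not prove this proposition at all: it is simply quoted from \cite[Corollary 7.5]{LLPP} with no argument given. So there is no ``paper's own proof'' to compare your attempt against.

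As for your sketch itself, there is a genuine gap beyond the one you already flag. Your first step invokes the short exact sequence $0 \to \mathcal O(-D) \to \mathcal O \to \mathcal O_D \to 0$ to relate $\underline\chi(\M,-)$ to $\chi(\M,-)$. That sequence yields the relation $\chi(X_{\Sigma_\M}, \alpha) - \chi(X_{\Sigma_\M}, \alpha \otimes \mathcal O(-D)) = \chi(X_{\underline\Sigma_\M}, \alpha|_D)$ for the \emph{genuine} sheaf Euler characteristics on the toric varieties. But the paper explicitly cautions (right after Proposition~\ref{prop:augKfeatures}) that $\chi(\M,-)$ is generally different from $\chi(X_{\Sigma_\M},-)$, and the same applies to $\underline\chi(\M,-)$ versus $\chi(X_{\underline\Sigma_\M},-)$. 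So you cannot conclude the displayed identity for the modified maps $\chi(\M,-)$ and $\underline\chi(\M,-)$ from sheaf theory alone; you would need a separate compatibility statement from \cite{LLPP}, which is essentially part of what that reference establishes in the course of proving the result you are trying to recover.

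Your alternative route via valuativity and realizable matroids is closer in spirit to how such formulas are actually obtained, but as stated it is still only an outline: you would need to verify valuativity of both sides and carry out the geometric computation on $\underline W_L$, neither of which is done here. In short, what you have written is a plausible plan rather than a proof, and since the paper defers entirely to \cite{LLPP}, the honest comparison is that neither the paper nor your proposal contains a self-contained argument.
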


By comparing this with Proposition~\ref{prop:KHallRado} and using Theorem~\ref{thm:projection}, we obtain the following non-augmented analogue of the theorem.

\begin{corollary}\label{cor:wonderful}
Let $\M$ be a matroid with subsets $S_1, \dotsc, S_m$ of the ground set, and let $\P$ be the restriction of the dragon-Hall--Rado polymatroid to $S_1, \dotsc, S_m$. 
Then, for any line bundle $\mathcal{L}$ which is a tensor product of the $\underline{\mathcal{L}}_{{S_i}}$, we have 
$\underline{\chi}(\M, \mathcal{L}) = \chi(Y_\P, \mathcal{L})$. 
\end{corollary}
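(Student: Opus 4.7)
The plan is to express both sides of the claimed equality as the same explicit combinatorial sum, using Proposition~\ref{prop:DHR} on the left and Proposition~\ref{prop:KHallRado} combined with Theorem~\ref{thm:projection} on the right. Write $\mathcal{L} = \underline{\mathcal{L}}_{S_1}^{\otimes t_1} \otimes \dotsb \otimes \underline{\mathcal{L}}_{S_m}^{\otimes t_m}$ and specialize the formula in Proposition~\ref{prop:DHR} by setting $t_{S_i} = t_i$ and $t_S = 0$ for every nonempty $S$ not among $S_1, \dotsc, S_m$. Since $0^{(k)} = 0$ for $k \geq 1$, only summands with $k_S = 0$ for all such extra $S$ survive. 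Using that $\M$ is loopless, the remaining dragon Hall--Rado inequalities collapse to the condition $\rk_\M(\bigcup_{i \in I} S_i) \geq 1 + \sum_{i \in I} k_i$ for every nonempty $I \subseteq [m]$, so the left-hand side equals $\sum_{\mathbf{k}} \mathbf{t}^{(\mathbf{k})}$ summed over such $\mathbf{k} \in \ZZ_{\geq 0}^m$.

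Next, let $\M'$ be the multisymmetric lift of $\P$, equipped with its distinguished partition $T_1 \sqcup \dotsb \sqcup T_m$ of its ground set, so that the restriction polymatroid of $\M'$ to $T_1, \dotsc, T_m$ is precisely $\P$. Proposition~\ref{prop:KHallRado} then yields
\[
\chi(\M', \mathcal{L}_{T_1}^{\otimes t_1} \otimes \dotsb \otimes \mathcal{L}_{T_m}^{\otimes t_m}) = \sum_{\mathbf{k}} \mathbf{t}^{(\mathbf{k})},
\]
where the sum ranges over $\mathbf{k} \in \ZZ_{\geq 0}^m$ satisfying $\sum_{i \in I} k_i \leq \rk_\P(I)$ for every $I \subseteq [m]$, i.e., over the lattice points of the independence polytope $I(\P)$. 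Applying Theorem~\ref{thm:projection} to $\M'$ with the subsets $T_1, \dotsc, T_m$ identifies the above further with $\chi(Y_\P, \mathcal{O}(t_1, \dotsc, t_m)) = \chi(Y_\P, \mathcal{L})$.

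It remains to match the two index sets, i.e., to verify that the dragon Hall--Rado inequalities on $S_1, \dotsc, S_m$ with respect to $\M$ cut out precisely $I(\P) \cap \ZZ^m$. This is the main (and essentially the only) point of the argument, and it is ultimately an unwinding of definitions: by construction the base polytope of the dragon Hall--Rado polymatroid is carved out by the inequalities $\sum_{S \in \mathcal{T}} k_S \leq \rk_\M(\bigcup_{\mathcal{T}} S) - 1$ over nonempty $\mathcal{T}$, together with $\sum_S k_S = \rk(\M) - 1$; restricting to coordinates indexed by $\{S_1, \dotsc, S_m\}$ and passing from the base polytope to the independence polytope via $I(\P) = (B(\P) + \RR^m_{\leq 0}) \cap \RR^m_{\geq 0}$ produces exactly the polytope defined by the dragon Hall--Rado inequalities for $S_1, \dotsc, S_m$. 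Once this identification is in hand, the two previous paragraphs combine to give the desired equality as a polynomial identity in $(t_1, \dotsc, t_m)$, from which the statement for any particular line bundle $\mathcal{L}$ (Laurent monomial in the $\underline{\mathcal{L}}_{S_i}$) follows.
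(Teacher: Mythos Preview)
Your proof is correct and follows essentially the same approach as the paper, which simply says ``by comparing [Proposition~\ref{prop:DHR}] with Proposition~\ref{prop:KHallRado} and using Theorem~\ref{thm:projection}.'' Your step 4 is the content of the paper's definition of the dragon Hall--Rado polymatroid: its independence polytope is by construction the set of $\mathbf{k}$ satisfying the dragon Hall--Rado inequalities, and intersecting with the coordinate subspace $\RR^{\{S_1,\dotsc,S_m\}}$ gives the independence polytope of the restriction $\P$ directly (the detour through the base polytope is unnecessary but harmless).
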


\section{Lorentzian property}\label{sec:3}

We briefly summarize Lorentzian polynomials and then prove Theorem~\ref{thm:lorentzian}.  Then, we explain the application to $K$-polynomials of multiplicity-free subvarieties. 

\subsection{Lorentzian Snapper polynomials}

Lorentzian polynomials were introduced in \cite{BH} as a generalization of stable polynomials in optimization theory and volume polynomials in algebraic geometry.

\begin{definition}
A homogeneous polynomial $f = \sum_{\mathbf k} c_{\mathbf k} \mathbf t^{\mathbf k} \in \RR[t_1, \dotsc, t_m]$ of degree $d$ with nonnegative coefficients is \emph{Lorentzian} if
\begin{enumerate}
\item the support $\{\mathbf k \in \ZZ^m_{\geq 0} : c_{\mathbf k} > 0\}$ of $f$ equals $B(\P) \cap \ZZ^m$ for some polymatroid $\P$ on $[m]$, and
\item any $(d-2)$-th partial derivative of $f$ is a quadratic form with at most one positive eigenvalue.
\end{enumerate}
\end{definition}

The \emph{normalization} $N(f)$ of a polynomial $f \in \RR[t_1, \dotsc, t_m]$ is the polynomial obtained by replacing each term $c_{\mathbf k}\mathbf t^{\mathbf k}$ in $f$ with $c_{\mathbf k}\frac{\mathbf t^{\mathbf k}}{\mathbf k!}$ where $\mathbf k! = k_1! \dotsm k_m!$.
We say that $f$ is \emph{denormalized Lorentzian} if $N(f)$ is Lorentzian.

\medskip
For an irreducible complete variety, the volume polynomial of a collection of nef divisors is Lorentzian \cite[Theorem 4.6]{BH}.
We now prove Theorem~\ref{thm:lorentzian}, which states that the Snapper polynomial of the line bundles $\{\mathcal{L}_S\}$ on a matroid is also Lorentzian after a minor transformation.

\medskip
As before, for $k \in \ZZ_{\geq 0}$, denote $t^{(k)} =  \binom{t + k -1}{k}$ and $t^{[k]} =\binom{t+k}{k}$, and for $\mathbf k \in \ZZ_{\geq 0}^m$, denote $\mathbf t^{(\mathbf k)} = t_1^{(k_1)} \cdots t_m^{(k_m)}$ and $\mathbf t^{[\mathbf k]} = t_1^{[k_m]} \cdots t_m^{[k_m]}$.
Let us recall the notation in Theorem~\ref{thm:lorentzian} that $\widetilde H(\mathbf t, t_0)$ is the homogenization of the polynomial $H(\mathbf t)$ defined by
\[
H(t_1, \dotsc, t_m) = \sum_{\mathbf k} a_{\mathbf k} \mathbf t^{\mathbf k} \quad\text{such that}\quad \chi\Big(\M, \mathcal L_{S_1}^{\otimes t_1} \otimes \dotsm \otimes \mathcal L_{S_m}^{\otimes t_m}\Big) = \sum_{\mathbf k} (-1)^{r-|\mathbf k|} a_{\mathbf k}  \mathbf t^{[\mathbf k]}
\]
for a matroid $\M$ on $E$ and $S_1, \dotsc, S_m\subseteq E$ whose restriction polymatroid has rank $r$.

\begin{proof}[Proof of Theorem~\ref{thm:lorentzian}]
By Corollary~\ref{cor:indepofM} and using the multisymmetric lift, we may assume that the matroid $\M$ on $E$ has rank $r$ also.
When one of $S_1, \dotsc, S_m$ has full rank, say $S_m$, the restriction polymatroid of $\M$ to $S_1, \dotsc, S_m$ is the same as if $S_m = E$.  So, we may set $S_m = E$.
The polynomial of interest is
\[
H(\mathbf t, t_E) = \sum_{\mathbf k, \ell} a_{\mathbf k, \ell} \mathbf t^{\mathbf k} t_E^{\ell} \quad\text{such that}\quad  \chi\Big(\M, \mathcal L_{S_1}^{\otimes t_1} \otimes \dotsm  \otimes \mathcal L_{S_{m-1}}^{\otimes t_{m-1}} \otimes \mathcal L_E^{ \otimes t_E}\Big) = \sum_{\mathbf k, \ell} (-1)^{r - |\mathbf k| - \ell} a_{\mathbf k, \ell} \mathbf t^{[\mathbf k]}t_E^{[\ell]}
\]
where the summation is over $(\mathbf k, \ell) \in \ZZ^{m-1}_{\geq 0}\times \ZZ_{\geq 0}$.
Let $\widetilde H(\mathbf t, t_E, t_0)$ be its homogenization.
We need show that $\widetilde H$ is denormalized Lorentzian.

For $(\mathbf k, \ell) \in \ZZ^{m-1}_{\geq 0}\times \ZZ_{\geq 0}$ with $|\mathbf{k}| \le r$, note that $(\mathbf k, \ell)$ satisfies the Hall--Rado condition, i.e., $(\mathbf k, \ell)\in I(\P)$, if and only if $(\mathbf k,\ell') \in I(\P)$ for all $\ell'$ such that $|\mathbf k|+\ell' \leq r$.
Thus, from Proposition~\ref{prop:KHallRado}, we compute
\begin{multline*}
\chi\Big(\M, \mathcal L_{S_1}^{\otimes t_1} \otimes \dotsm  \otimes \mathcal L_{S_{m-1}}^{\otimes t_{m-1}} \otimes \mathcal L_E^{ \otimes t_E}\Big)
= \sum_{(\mathbf k, \ell) \in I(\P)} \mathbf t^{(\mathbf k)}t_E^{(\ell)}\\
= \sum_{(\mathbf k, \ell) \in B(\P)} \textbf t^{(\mathbf k)} t_E^{[\ell]}
= \sum_{(\mathbf k, \ell) \in B(\P)} t_E^{[\ell]} \prod_{i = 1}^{m-1}\big(t_i^{[k_i]} - t_i^{[k_i - 1]} \big),
\end{multline*}
where we used the binomial identity $t^{[k]} = t^{(k)} + t^{(k-1)} + \dotsb + t^{(1)} + 1$ for the second equality, and the binomial identity $t^{(k)} = t^{[k]}- t^{[k-1]}$ for the third (observed in Notation~\ref{notate:()[]}).  That is, we find 
\begin{equation}\label{eq:lorentzian}
\begin{split}
&H(\mathbf t, t_E) = \sum_{(\mathbf k, \ell) \in B(\P)} t_E^{\ell} \prod_{i = 1}^{m-1}\big(t_i^{k_i} + t_i^{k_i - 1} \big),\\
&\text{so that}\quad \widetilde H(\mathbf t, t_E, t_0) = \sum_{(\mathbf k, \ell) \in B(\P)} t_E^{\ell} \prod_{i = 1}^{m-1}\big(t_i^{k_i} + t_0t_i^{k_i - 1} \big).
\end{split}
\end{equation}
Normalizing, we thus have
\[
N(\widetilde H)(\mathbf t, t_E, t_0) = \sum_{(\mathbf k, \ell) \in B(\P)} \frac{t_E^{\ell}}{\ell!} \cdot \left(\prod_{i = 1}^{m-1} \Big(1 + t_0 \frac{\partial}{\partial t_i}\Big)\right)\left( \frac{\mathbf t^{\mathbf k}}{\mathbf k !}\right).
\]
The exponential generating function over the lattice points of the base polytope of a polymatroid is Lorentzian \cite[Theorem 3.10]{BH}, and the operator $(1 + t_0 \frac{\partial}{\partial t_i})$ preserves Lorentzian polynomials \cite[Proposition 2.7]{BH}.  Hence, $N(\widetilde H)$ is Lorentzian, i.e., $\widetilde H$ is denormalized Lorentzian.
\end{proof}

\begin{remark}
In general, the Snapper polynomial of very ample divisors on an irreducible projective variety may not similarly give a denormalized Lorentzian polynomial.  For example, on $\PP^1 \times \PP^1$, consider the line bundles $\mathcal L_1 = \mathcal O(2,2)$ and $\mathcal L_2 = \mathcal O(1,1)$.  We have that
\[
\chi(X, \mathcal L_1^{\otimes t_1} \otimes \mathcal L_2^{\otimes t_2}) = (2t_1+t_2+1)^2 = 8 t_1^{[2]}+4 t_1^{[1]}t_2^{[1]}+2 t_2^{[2]} - 12 t_1^{[1]} - 5t_2^{[1]}+4.
\]
The normalization of the homogenization of this polynomial (after  removing the alternating signs and turning $t^{[k]}$ into $t^{k}$) is
\[
4{t}_{1}^{2}+4{t}_{1}{t}_{2}+{t}_{2}^{2} +12{t}_{0}{t}_{1}+5{t}_{0}{t}_{2} +2{t}_{0}^{2},
\]
whose Hessian matrix has signature $(+,+,-)$. See \cite[Section 5.2]{FerroniHigashitani} for a related example.  
\end{remark}

\subsection{Applications}\label{ssec:appLorent}
We now explain applications of Theorem~\ref{thm:lorentzian} to $K$-polynomials.
The connection stems from the following formal consequences of some binomial identities, whose proofs we omit.

\medskip
For a polynomial $\chi(t_1, \dots, t_m) \in \mathbb Q[t_1, \dotsc, t_m]$ where  each monomial has degree at most  $(a_1, \dotsc, a_m)$, we have
\[
\sum_{\mathbf k \geq 0} \chi(k_1, \dots, k_m) \mathbf{t}^{\mathbf k} = \frac{\mathcal K(\chi, \mathbf t)}{(1-t_1)^{a_1}\dotsm (1-t_k)^{a_m}}
\]
for some polynomial $\mathcal K(\chi; \mathbf t)$ of degree at most $(a_1, \dots, a_m)$.
The polynomial $\mathcal K(\chi; 1 - t_1, \dotsc, 1-t_m)$, denoted $\mathcal K(\chi; \mathbf 1 - \mathbf t)$, is equivalently described as
\[
\mathcal K(\chi, \mathbf 1 - \mathbf t)  = \sum_{\mathbf k} c_{\mathbf a - \mathbf k} \mathbf t^{\mathbf k} \qquad\text{where}\qquad \chi(t_1, \dotsc, t_m) = \sum_{\mathbf k} c_{\mathbf k} \mathbf t^{[\mathbf k]}.
\]

Now, suppose a subvariety $X\subseteq \PP^{a_1} \times \dotsb \times \PP^{a_m}$ has the property 
that $\chi(X, \mathcal O_X(\mathbf k)) = h^0(X, \mathcal O(\mathbf k))$ for all $\mathbf k\in \ZZ_{\geq 0}^m$.
For instance, an irreducible multiplicity-free subvariety satisfies this property \cite{BrionMultiplicity}.
In this case, with $\chi(t_1, \dotsc, t_m)$ as the polynomial  $\chi(X, \mathcal O(t_1, \dotsc, t_m))$, 
the polynomial $\mathcal K(\chi, \mathbf 1 - \mathbf t) = \sum_{\mathbf k} c_{\mathbf a - \mathbf k} \mathbf t^{\mathbf k}$ encodes the $K$-class $[\mathcal O_X] \in K(\prod_{i = 1}^m \PP^{a_i})$ of the structure sheaf of $X$, that is,
\[
[\mathcal O_X] = \sum_{\mathbf k} c_{\mathbf a - \mathbf k} [\mathcal O_{H_1}]^{k_1}\dotsm [\mathcal O_{H_m}]^{k_m}
\]
where $\mathcal O_{H_i}$ denotes the structure sheaf of $\PP^{a_1} \times \dotsb \times \PP^{a_{i-1}} \times H_i \times \PP^{a_{i+1}} \times \dotsb \times \PP^{a_m}$ for a hyperplane $H_i \subset \PP^{a_i}$.
Note that, in the notation of Proposition~\ref{prop:KnutsonSplitting}, we have $[\mathcal O_{Y_{\mathbf a - \mathbf k}}] = [\mathcal O_{H_1}]^{k_1}\dotsm [\mathcal O_{H_m}]^{k_m}$.

\medskip
The polynomial $\mathcal K(\chi, \mathbf 1 - \mathbf t)$ is sometimes called the \emph{twisted $K$-polynomial}. The authors of \cite{CCMM} showed that, for an irreducible multiplicity-free subvariety $X \subseteq \prod_i \PP^{a_i}$, its coefficients have alternating signs, i.e., $(-1)^{\dim X - |\mathbf k|}c_{\mathbf k} \geq 0$.
Over $\mathbb{C}$, Brion \cite{BrionPositive} showed, more generally, that an irreducible subvariety $X$ with rational singularities in a flag variety $G/P$ has the property that the expansion of $[\mathcal O_X] \in K(G/P)$ in terms of the structure sheaves of Schubert subvarieties has alternating signs.

\medskip
For a polymatroid $\P$ of rank $r$ on $[m]$, not necessarily arising from an irreducible multiplicity-free subvariety, the authors of \cite{CCMM} defined the polynomial $g_\P(t_0,t_1, \dotsc, t_m)$ by
\[
g_\P(t_0, t_1, \dotsc, t_m) = \sum_{\mathbf k \in I(\P)\cap \ZZ^m} c_{\mathbf k}(-t_0)^{r-|\mathbf k|}\textbf t^{\textbf k}
\]
where the $c_{\mathbf k}$ are defined by the recursive formula given in Proposition~\ref{prop:KnutsonSplitting}.

\begin{corollary}\label{cor:CCMMconj}
Let $\P$ be a polymatroid of rank $r$ on $[m]$ such that $\rk_\P(i) = r$ for some $i\in [m]$.
Then $g_\P$ is a denormalized Lorentzian polynomial.  In particular, its support is the set of lattice points of the base polytope of a polymatroid.\footnote{When the support of the homogenization $\widetilde f$ of an inhomogeneous polynomial $f$ satisfies this property, the authors of \cite{CCMM} say that the support of the polynomial $f$ is a \emph{generalized polymatroid}.}
\end{corollary}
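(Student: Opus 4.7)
The plan is to identify $g_\P$ directly with the homogenization $\widetilde H$ produced in Theorem~\ref{thm:lorentzian}, from which the Lorentzian conclusion follows immediately.

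First I would set up the matroid. Let $\M$ be the multisymmetric lift of $\P$ with distinguished partition $E = S_1 \sqcup \dotsm \sqcup S_m$. The hypothesis $\rk_\P(i) = r$ for some $i$ translates to $\rk_\M(S_i) = r$, so Theorem~\ref{thm:lorentzian} applies to $\M$ and $S_1, \dotsc, S_m$ and produces a denormalized Lorentzian polynomial $\widetilde H(\mathbf t, t_0)$. It then remains to show $\widetilde H = g_\P$.

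Next, I would match coefficients. By Theorem~\ref{thm:projection},
$$\chi(\M, \mathcal L_{S_1}^{\otimes t_1} \otimes \dotsm \otimes \mathcal L_{S_m}^{\otimes t_m}) = \chi(Y_\P, \mathcal O(\mathbf t)).$$
Applying Proposition~\ref{prop:KnutsonSplitting} together with the identity $\chi(Y_{\mathbf b}, \mathcal O(\mathbf t)) = \mathbf t^{[\mathbf b]}$, which holds because $Y_{\mathbf b}$ is a linearly embedded $\PP^{b_1} \times \dotsm \times \PP^{b_m}$, expresses this Euler characteristic as $\sum_{\mathbf b} c_{\mathbf b}\, \mathbf t^{[\mathbf b]}$. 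Since $\{\mathbf t^{[\mathbf k]}\}$ is a $\QQ$-basis of $\QQ[\mathbf t]$, comparing with the defining expansion of $a_{\mathbf k}$ in Theorem~\ref{thm:lorentzian} forces $(-1)^{r - |\mathbf k|} a_{\mathbf k} = c_{\mathbf k}$ for every $\mathbf k$. Substituting into the definition of $\widetilde H$ gives
$$\widetilde H(\mathbf t, t_0) = \sum_{\mathbf k} a_{\mathbf k}\,\mathbf t^{\mathbf k} t_0^{r - |\mathbf k|} = \sum_{\mathbf k} c_{\mathbf k}\,\mathbf t^{\mathbf k}(-t_0)^{r - |\mathbf k|},$$
which matches the shape of $g_\P(t_0, \mathbf t)$ up to the range of summation.

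Finally, I would check that the range of summation also agrees. The explicit product formula for $H$ derived in the proof of Theorem~\ref{thm:lorentzian} (after taking $S_m = E$) shows that the support of $H$ is contained in $I(\P) \cap \ZZ^m$, since each monomial is obtained by expanding $\prod_{i=1}^{m-1}(t_i^{k_i} + t_i^{k_i - 1})$ for some $(\mathbf k, \ell) \in B(\P)$ and the resulting exponent tuples are coordinatewise bounded by $(\mathbf k, \ell)$. Consequently $a_{\mathbf k}$, and hence $c_{\mathbf k}$, vanishes for $\mathbf k \notin I(\P)$, so $\widetilde H = g_\P$. The Lorentzian conclusion is then Theorem~\ref{thm:lorentzian}, and the ``in particular'' statement about the support being the lattice points of the base polytope of a polymatroid is immediate from the definition of Lorentzian applied to $N(\widetilde H)$. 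The only nontrivial bookkeeping is aligning the sign conventions $(-t_0)^{r - |\mathbf k|} = (-1)^{r - |\mathbf k|} t_0^{r - |\mathbf k|}$ and confirming the support containment; both are formal, so the main content of the corollary is absorbed entirely into Theorems~\ref{thm:projection} and~\ref{thm:lorentzian}.
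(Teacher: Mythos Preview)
Your proof is correct and follows the same line as the paper's: identify $g_\P$ with $\widetilde H$ via Theorem~\ref{thm:projection} and Proposition~\ref{prop:KnutsonSplitting}, then invoke Theorem~\ref{thm:lorentzian}. The paper's proof is a single sentence to this effect; you have simply filled in the details, including the support verification that the paper leaves implicit. One minor remark: your check that $a_{\mathbf k}=0$ for $\mathbf k\notin I(\P)$ via the product formula \eqref{eq:lorentzian} is fine, but it can also be read off directly from Proposition~\ref{prop:KHallRado}, since $\sum_{\mathbf k\in I(\P)}\mathbf t^{(\mathbf k)}$ has monomial support in the down-set $I(\P)$ and the change of basis to $\{\mathbf t^{[\mathbf b]}\}$ is triangular with respect to the coordinatewise order.
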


\begin{proof}
Combining Proposition~\ref{prop:KnutsonSplitting} and Theorem~\ref{thm:projection}, we find that $g_\P$ is exactly the polynomial $\widetilde H$ in Theorem~\ref{thm:lorentzian}.  Hence, the corollary is a restatement of Theorem~\ref{thm:lorentzian}.
\end{proof}

When the condition ``$\rk_\P(i) = \rk_\P([m])$ for some $i\in [m]$'' in the corollary is removed, the statement about the support is \cite[Conjecture 7.18]{CCMM}, and whether $g_\P$ is denormalized Lorentzian is \cite[Question 7.21]{CCMM}.

\begin{remark}\label{rem:motivation}
A subset $A \subset \ZZ^m$ is \emph{M-convex} if it can be translated to be the set of lattice points of the base polytope of a polymatroid on $[m]$.
By noting that reflecting an M-convex subset by a coordinate hyperplane give an M-convex subset, 
one can observe the following.
Let $\chi(t_1, \dotsc, t_m)=\sum_{\mathbf k} c_{\mathbf k} \mathbf t^{[\mathbf k]}$ be a polynomial of total degree $r$, and let $\mathcal K(\chi,\mathbf 1 - \mathbf t) = \sum_{\mathbf k} c_{\mathbf a - \mathbf k} \mathbf t^{\mathbf k}$ as before.  One has implications (i)$\implies$(ii)$\iff$(iii) of the following statements:
\begin{enumerate}[label = (\roman*)]
\item The homogeneous polynomial $\sum_{\mathbf k} c_{\mathbf k} (-t_0)^{r-|\mathbf k|} \mathbf t^{\mathbf k}$ is denormalized Lorentzian.
\item The homogeneous polynomial $\sum_{\mathbf k} c_{\mathbf k} (-t_0)^{r-|\mathbf k|} \mathbf t^{\mathbf k}$ has M-convex support.
\item The homogenization of $\mathcal K(\chi,\mathbf 1 - \mathbf t)$ has M-convex support.
\end{enumerate}
The main result of \cite{CCMM} states that, when $\chi(t_1, \dotsc, t_m) = \chi(X, \mathcal O(t_1, \dotsc, t_m))$ for an irreducible multiplicity-free subvariety $X\subseteq \PP^{a_1} \times \dotsb \times \PP^{a_m}$,
the polynomial $\sum_{\mathbf k} c_{\mathbf k} (-t_0)^{r-|\mathbf k|} \mathbf t^{\mathbf k}$ has M-convex support.
Corollary~\ref{cor:CCMMconj} implies furthermore that if the projection $X \to \PP^{a_i}$ onto one of the factors is generically finite onto its image for some $i$, then the twisted $K$-polynomial is \emph{dually Lorentzian} in the sense of \cite{RSW}.
\end{remark}

\begin{remark}\label{rem:matroidcase}
Let $\P$ be the restriction polymatroid of a matroid $\M$ on $E$ to the collection $S_1, \dotsc, S_m$, and let $\widetilde H$ be the polynomial defined in Theorem~\ref{thm:lorentzian}.
Using results in \cite{EHL}, one can show that when $\P$ is a matroid, Theorem~\ref{thm:lorentzian} holds without the assumption ``$\rk_\P(i) = \rk_\P([m])$ for some $i\in [m]$.''
We sketch a proof here.  Let $r$ be the rank of $\P$.

Replacing $E$ by $S_1 \cup \dotsb \cup S_m$, we assume $\rk_\M(E) =  \rk_\P([m]) = r$.  Consider the polynomial
\[
H'(\mathbf t, t_E) = \sum_{\mathbf k, \ell} a_{\mathbf k, \ell} \mathbf t^{\mathbf k} t_E^{\ell} \quad\text{such that}\quad  \chi\Big(\M, \mathcal L_{S_1}^{\otimes t_1} \otimes \dotsm  \otimes \mathcal L_{S_m}^{\otimes t_m} \otimes \mathcal L_E^{ \otimes t_E}\Big) = \sum_{\mathbf k, \ell} (-1)^{r - |\mathbf k| - \ell} a_{\mathbf k, \ell} \mathbf t^{[\mathbf k]}t_E^{[\ell]}
\]
where the summation is over $(\mathbf k, \ell) \in \ZZ^m_{\geq 0}\times \ZZ_{\geq 0}$.
Let $\widetilde H'(\mathbf t, t_E, t_0)$ be its homogenization.
Because $0^{[\ell]} = 1$, and $\widetilde H$ and $\widetilde H'$ both have degree $r$, setting $- t_E = t_0$ in $\widetilde H'$ gives the originally desired $\widetilde H(\mathbf t, t_0)$.  Combining this with the formula \eqref{eq:lorentzian}, we find that
\[
\text{the coefficient of $\mathbf t^{\mathbf k}t_0^{r-|\mathbf k|}$ in $\widetilde H(\textbf t, t_0)$ is }\sum_{\substack{J \subseteq [m] \text{ such that}\\ \mathbf k + \be_J \in B(\P)}}  (-1)^{|J|}
\]
where $\be_J = \sum_{J\in J} \be_j \in \RR^m$ denotes the sum of the standard basis vectors of $J$.

Now, if $\P$ is a matroid $\mathrm{N}$ on $[m]$, the displayed equation implies that
\[
\widetilde H(\mathbf t, t_0) = \sum_{\substack{I \subseteq [m]\\ \be_I \in I(\mathrm{N})}} T_{\mathrm{N}/I}(0,1) \mathbf t^{\be_I} t_0^{r - |I|}
\]
where $T_{\mathrm{N}/I}$ is the Tutte polynomial of the contraction matroid $\mathrm{N}/I$.
The right-hand-side polynomial $ \sum_{I \in I(\mathrm{N})} T_{\mathrm{N}/I}(0,1) \mathbf t^{\be_I} t_0^{r - |I|}$ is obtained from a denormalized Lorentzian polynomial in variables $x, z, w, u_1, \dotsc, u_m$ provided in \cite[Theorem 1.4 and Remark 8.9]{EHL} via the following two steps.  One keeps only the terms exactly divisible by $w^{m-r}$, and then sets $x = 0, z = t_0, u_i = t_i$.  Both steps preserve denormalized Lorentzian polynomials, and hence $\widetilde H(\mathbf t, t_0)$ is denormalized Lorentzian when $\P$ is a matroid $\mathrm{N}$.
\end{remark}

\section{$h^*$-vectors for matroids}\label{sec:4}
In this section, we define and study $h^*$-vectors of line bundles in $\underline{K}(\M)$. Let $\M$ be a loopless matroid.  For a line bundle $\mathcal{L}$ on $X_{\underline{\Sigma}_{\mathrm{M}}}$, it follows from Proposition~\ref{prop:KHallRado} that the function $t \mapsto \underline{\chi}(\M, \mathcal{L}^{\otimes t})$ is a polynomial in $t$, which we call the \emph{Snapper polynomial} of $\mathcal{L}$ on $\M$. 

\begin{definition}
For a loopless matroid $\M$ on a ground set $E$ and a line bundle $\mathcal L$ in $\underline K(\M)$, we define its \emph{$h^*$-vector} $(h^*_0(\M,\mathcal L), \dotsc, h^*_{d}(\M,\mathcal L))$ by
\[
\sum_{k \geq 0}\underline\chi(\M, \mathcal L^{\otimes k}) q^k = \frac{h^*(\M,\mathcal L; q)}{(1-q)^{d+1}} \quad \text{ where } \quad
h^*(\M,\mathcal L; q) = \sum_{k = 0}^{d} h^*_k(\M,\mathcal L) q^k,
\]
and $d$ is the degree of the Snapper polynomial of $\mathcal{L}$.
\end{definition}

Theorem~\ref{thm:macaulay} states that the $h^*$-vector is a Macaulay vector when $\mathcal L = \bigotimes_{S\subseteq E} \underline{\mathcal L}_{S}^{\otimes k_S}$ with $k_S \geq 0$ for all $S$.
In this section, we prove this theorem. 

\smallskip
In Section~\ref{ssec:macaulay}, we review Macaulay vectors and their relation to Cohen--Macaulayness and cohomology vanishing.  In Section~\ref{ssec:proofmacaulay}, we use properties of $Y_\P$ to prove Theorem~\ref{thm:macaulay}.
A generalization of Theorem~\ref{thm:macaulay} is conjectured in Section~\ref{ssec:LofP}.
Results on the degree of Snapper polynomials, necessary for studying $h^*$-vectors, are given in Section~\ref{ssec:numericaldim}.

\subsection{Macaulay vectors}\label{ssec:macaulay}
Recall that the Hilbert function of a graded algebra over a field $\kk$ is the sequence of the $\kk$-dimensions of the graded pieces.
For the numerical properties we consider, we may extend scalars to an extension of $\kk$, so we may assume $\kk$ is infinite as needed.

\begin{definition}\label{def:Macaulayvector}
A sequence $(h_0, h_1, \dotsc, h_d)$ is a \emph{Macaulay vector} if $(h_0, h_1, \dotsc, h_d, 0, 0, \dotsc)$ is the Hilbert function of a graded artinian $\kk$-algebra $A^{\bullet}$ which is generated in degree $1$ and has $A^0 = \kk$. 
\end{definition}

Macaulay vectors are also called M-vectors and O-sequences.
Macaulay gave an explicit description of these vectors as follows \cite[Theorem 4.2.10]{BH93}.
Given positive integers $n$ and $d$, there is a unique expression
$$n = \binom{k_d}{d} + \binom{k_{d - 1}}{d - 1} + \dotsb + \binom{k_{\delta}}{\delta}, \quad k_d > k_{d-1} > \dotsb > k_{\delta} \ge 1.$$
Set $n^{\langle d \rangle} = \binom{k_d + 1}{d + 1} + \dotsb + \binom{k_{\delta} + 1}{\delta + 1}$. Then $(1, a_1, \dotsc, a_d)$ is a Macaulay vector if and only if $0 \le a_{t + 1} \le a_t^{ \langle t \rangle}$ for all $t \ge 1$. 

\smallskip
Macaulay vectors often appear in the following way.
Suppose $R^\bullet$ is a graded Cohen--Macaulay algebra of Krull dimension $d+1$ with $R^0 = \kk$.
If the quotient of $R^\bullet$ by the ideal generated by $R^1$ is artinian, then $R^\bullet$ admits a linear system of parameters \cite[Propositions 1.5.11 and 1.5.12]{BH93}.
In this case, the quotient by a linear system of parameters is a graded artinian algebra $A^\bullet$ with the property that
\[
\sum_{k \geq 0} (\dim_\kk R^k) q^k = \frac{\dim_\kk A^0 + (\dim_\kk A^1) q + \dotsb + (\dim_\kk A^d) q^d}{(1-q)^{d+1}}.
\]
See for instance \cite[Remark 4.1.11]{BH93}.
In particular, if $R^\bullet$ is generated in degree 1, then the numerator of its Hilbert series $\sum_{k \geq 0} (\dim_\kk R^k) q^k$ is a polynomial whose coefficients form a Macaulay vector.
For the proof of Theorem~\ref{thm:macaulay}, we record the following cohomological criterion for a section ring to be Cohen--Macaulay. 

\begin{proposition}\label{prop:ampleMacaulay}
Let $\mathcal{L}$ be an ample line bundle on a geometrically connected and geometrically reduced projective $\kk$-variety $X$ of dimension $d$. Suppose that $H^i(X, \mathcal{L}^{\otimes k}) = 0$ for all $i > 0$ when $k \ge 0$, and $H^i(X, \mathcal{L}^{\otimes k}) = 0$ for all  $i < d$ when $k < 0$. Then, the section ring
\[
R_{\mathcal{L}}^\bullet \coloneqq  \bigoplus_{k \ge 0} H^0(X, \mathcal{L}^{\otimes k})
\]
is a graded Cohen--Macaulay $\kk$-algebra with $R_{\mathcal L}^0 = \kk$.  If furthermore $R^\bullet_{\mathcal L}$ is generated in degree 1, then the sequence $(h_0, \dotsc, h_d)$ defined by
\begin{equation}\label{eq:definingh}
\sum_{k \ge 0} \chi(X, \mathcal{L}^{\otimes k}) q^k = \frac{h_0 + h_1q + \dotsb + h_dq^d}{(1 - q)^{d+1}}
\end{equation}
is a Macaulay vector.
\end{proposition}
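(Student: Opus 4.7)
The plan is to reduce the Cohen--Macaulay assertion to the standard characterization via local cohomology, and then deduce the Macaulay vector property by quotienting the section ring by a linear system of parameters.

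For the Cohen--Macaulay claim, I would invoke the well-known exact sequence relating the local cohomology of a section ring at the irrelevant ideal $\mathfrak m$ to sheaf cohomology: for $R = R_{\mathcal L}^\bullet$ one has
\[
0 \to H^0_{\mathfrak m}(R) \to R \to \bigoplus_{k \in \ZZ} H^0(X, \mathcal L^{\otimes k}) \to H^1_{\mathfrak m}(R) \to 0
\]
together with graded isomorphisms $H^i_{\mathfrak m}(R)_k \cong H^{i-1}(X, \mathcal L^{\otimes k})$ for every $i \ge 2$ and $k \in \ZZ$. The inclusion of $R$ into the full doubly-infinite direct sum is injective, so $H^0_{\mathfrak m}(R) = 0$. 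For $d = 0$ the statement reduces to $X$ being a point, so assume $d \ge 1$; then $H^0(X, \mathcal L^{\otimes k}) = 0$ for $k < 0$ by hypothesis, which forces $H^1_{\mathfrak m}(R) = 0$. For $2 \le i \le d$ and any $k$, we have $i - 1 \in \{1, \dots, d-1\}$, and the two hypotheses cover the cases $k \ge 0$ and $k < 0$ respectively, giving $H^i_{\mathfrak m}(R) = 0$. Since $\mathcal L$ is ample, $R$ is a finitely generated graded $\kk$-algebra, and dimension count shows its Krull dimension is $d+1$, so the vanishing of $H^i_{\mathfrak m}(R)$ for all $i < d+1$ exactly says $R$ is Cohen--Macaulay. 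The identity $R_{\mathcal L}^0 = H^0(X, \mathcal O_X) = \kk$ follows from $X$ being geometrically connected and geometrically reduced.

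For the Macaulay vector statement, assume $R$ is generated in degree $1$. After extending scalars to an infinite field (which does not change Hilbert functions or the Cohen--Macaulay property), standard commutative algebra yields a linear system of parameters $\theta_1, \dots, \theta_{d+1} \in R^1$; see \cite[Propositions 1.5.11 and 1.5.12]{BH93}. The quotient $A^\bullet = R^\bullet/(\theta_1, \dots, \theta_{d+1})$ is a graded artinian $\kk$-algebra with $A^0 = \kk$ that is still generated in degree $1$, so its Hilbert function is a Macaulay vector by definition. Because $R^\bullet$ is Cohen--Macaulay, the $\theta_i$ form a regular sequence, and the Hilbert series of $R^\bullet$ equals that of $A^\bullet$ divided by $(1-q)^{d+1}$. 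Finally, the hypothesis that $H^i(X, \mathcal L^{\otimes k}) = 0$ for $i > 0$ and $k \ge 0$ identifies $\chi(X, \mathcal L^{\otimes k})$ with $\dim_\kk R_{\mathcal L}^k$, so the left-hand side of \eqref{eq:definingh} is the Hilbert series of $R^\bullet$, and the numerator $h_0 + h_1 q + \dotsb + h_d q^d$ is the Hilbert series of $A^\bullet$.

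The only step that requires any real care is the local cohomology computation; everything else is a routine application of standard graded commutative algebra. The main pitfalls are handling the low-degree local cohomology terms correctly (where the short exact sequence differs from the uniform isomorphism for $i \ge 2$) and verifying that the hypothesis genuinely covers both signs of $k$ in the right range of cohomological indices.
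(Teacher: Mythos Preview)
Your proposal is correct and follows essentially the same route as the paper: compute the local cohomology of the section ring via the standard comparison with sheaf cohomology (you are more explicit about the $i=0,1$ cases using the exact sequence, while the paper just notes these vanish automatically for a section ring), and then deduce the Macaulay-vector statement by passing to the quotient by a linear system of parameters, which the paper records in the discussion immediately preceding the proposition.
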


\begin{proof}
The sequence $(h_0, \dotsc, h_d)$ is well-defined via \eqref{eq:definingh} because $\chi(X, \mathcal{L}^{\otimes k})$ is a polynomial in $k$ (see \cite[Section 4.3]{Sta12}).
Because $X$ is geometrically connected, geometrically reduced, and proper over $\operatorname{Spec} \kk$, we have $R_{\mathcal{L}}^0 =\kk$.
Because all of the higher cohomology vanishes, we have $\chi(X, \mathcal{L}^{\otimes k}) = \dim H^0(X, \mathcal{L}^{\otimes k})$ for $k \ge 0$. Therefore the second statement follows from the first by our discussion above about Macaulay vectors.

It remains to show that $R_{\mathcal{L}}^{\bullet}$ is a Cohen--Macaulay graded ring.  That is, we show that the local cohomology $H^i_{\mathfrak{m}}(R_{\mathcal{L}}^{\bullet}; R_{\mathcal{L}}^{\bullet})$ with respect to the irrelevant ideal $\mathfrak m$ of $R_{\mathcal{L}}^{\bullet}$ vanishes for $i < d + 1$. The vanishing when $i=0, 1$ is automatic since $R_{\mathcal L}^{\bullet}$ is the section ring of $\mathcal O(1)$ on $X = \operatorname{Proj} R_{\mathcal L}^{\bullet}$.
For $i\geq 2$, we have $H^i_{\mathfrak{m}}(R_{\mathcal{L}}^{\bullet}; R_{\mathcal{L}}^{\bullet}) = \bigoplus_{k \in \mathbb{Z}} H^{i-1}(\operatorname{Proj} R_{\mathcal{L}}^{\bullet}, \mathcal{L}^{\otimes k})$ by \cite[Theorem 20.4.4]{BrodmannSharp}.
As $X = \operatorname{Proj} R_{\mathcal{L}}^{\bullet}$, the sheaf cohomology vanishing hypothesis gives desired vanishing of local cohomology.
\end{proof}

\subsection{Properties of $Y_\P$ and Theorem~\ref{thm:macaulay}}\label{ssec:proofmacaulay}

Let $\P$ be a polymatroid with cage $(a_1, \dotsc, a_m)$, and let $Y_\P\subseteq \PP^{a_1} \times \dotsm \times \PP^{a_m}$ be the subvariety defined in the introduction.
We note that $Y_\P$ is Cohen--Macaulay and compatibly Frobenius split, and we use these properties of prove Theorem~\ref{thm:macaulay}.

\begin{proposition}\label{prop:CM}
The variety $Y_\P$ is Cohen--Macaulay.
\end{proposition}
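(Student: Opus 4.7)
The plan is to identify $Y_\P$ as a multi-Stanley--Reisner scheme and then establish Cohen--Macaulayness from the shellability of the associated simplicial complex. In multi-homogeneous coordinates $x_{i,0},\dotsc,x_{i,a_i}$ on $\PP^{a_i}$, each component $L_1(b_1)\times\dotsb\times L_m(b_m)$ is cut out by the monomial ideal generated by $\{x_{i,j}:j>b_i,\ i\in[m]\}$, so the ideal of $Y_\P$ is the squarefree monomial ideal
\[
I_\P \;=\; \bigcap_{\mathbf{b}\in B(\P)\cap\ZZ^m}\bigl(x_{i,j} : i\in[m],\ j>b_i\bigr).
\]
Thus the multi-affine cone of $Y_\P$ is the Stanley--Reisner scheme of a pure simplicial complex $\Delta_\P$ on vertex set $\{(i,j):i\in[m],\,0\le j\le a_i\}$ with facets $F_\mathbf{b}=\{(i,j):0\le j\le b_i\}$ for $\mathbf{b}\in B(\P)\cap\ZZ^m$, each of cardinality $\operatorname{rk}(\P)+m$. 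Cohen--Macaulayness of $Y_\P$ is then equivalent to Cohen--Macaulayness of $\Delta_\P$ over an arbitrary field, which holds whenever $\Delta_\P$ is shellable.

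Since every facet of $\Delta_\P$ contains the vertices $(1,0),\dotsc,(m,0)$, the complex $\Delta_\P$ is the simplicial join of an $(m-1)$-simplex with the subcomplex $\Delta_\P^\circ$ on $\{(i,j):1\le j\le a_i\}$ whose facets are $F_\mathbf{b}^\circ=\{(i,j):1\le j\le b_i\}$ of cardinality $\operatorname{rk}(\P)$; hence it suffices to show that $\Delta_\P^\circ$ is shellable. I would order the facets of $\Delta_\P^\circ$ by sweeping the lattice points of $B(\P)$ with a generic linear functional on $\RR^m$, noting that intersections are controlled by coordinate-wise minima: $F_\mathbf{b}^\circ\cap F_{\mathbf{b}'}^\circ=\{(i,j):1\le j\le \min(b_i,b_i')\}$. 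The shelling condition then becomes the requirement that for each $\mathbf{b}$, the union of all earlier intersections is pure of codimension one in $F_\mathbf{b}^\circ$, which should follow from a basis exchange argument: for each coordinate $i$ with $b_i>0$ one locates an earlier $\mathbf{b}'$ with $b_i'=b_i-1$ and $b_j'\ge b_j$ for some $j\neq i$.

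The main obstacle is making this shelling rigorous for an arbitrary polymatroid $\P$, since the required exchange property for M-convex sets is a nontrivial combinatorial statement. For matroid base polytopes, the analogous shellability is classical work of Provan--Billera and Bj\"orner, and the polymatroid case can in principle be reduced to the matroid case via the multisymmetric lift of Definition~\ref{defn:lift}. Alternatively, one may simply cite the reference \cite{CCRC}, where the Cohen--Macaulayness of $Y_\P$ is established directly.
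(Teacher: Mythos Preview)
Your identification of $Y_\P$ with the Stanley--Reisner scheme of the complex $\Delta_\P$ is correct, and your fallback of citing \cite{CCRC} is precisely what the paper does: the paper's proof consists of citing Brion's geometric argument in the realizable case and \cite[Proof of Theorem~5.6]{CCRC} (which in turn rests on properties of polymatroidal ideals from \cite[Chapter~12.6]{HerzogHibi}) in general. So the proposal, taken as a whole, suffices.

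Your shellability sketch is a genuinely different, more hands-on route, but as written it has a gap beyond what you flag. Your phrasing of the shelling condition has the quantifiers reversed: it is not that for each coordinate $i$ with $b_i>0$ you must find an earlier neighbor dropping $i$, but rather that for each \emph{earlier} $\mathbf{b}'$ you must find some $i$ with $b_i'<b_i$ and a neighbor $\mathbf{b}''=\mathbf{b}-e_i+e_j\in B(\P)$ that is itself earlier. M-convexity supplies a neighbor, but arranging it to precede $\mathbf{b}$ under a \emph{generic} linear functional is exactly the delicate point, and a bare appeal to basis exchange does not settle it. The clean way to close this is not a generic sweep but a lexicographic-type order: the theorem that polymatroidal monomial ideals have linear quotients (Conca--Herzog, Herzog--Takayama; see \cite[Chapter~12]{HerzogHibi}) is, after polarization and Alexander duality, exactly the statement that $\Delta_\P$ is shellable, and this is the content underlying the reference \cite{CCRC} that both you and the paper ultimately invoke.
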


\begin{proof}
When there is a multiplicity-free subvariety $X \subseteq \PP^{a_1} \times \dotsm \times \PP^{a_m}$ whose polymatroid is $\P$, the Cohen--Macaulayness of $Y_\P$ is proven in \cite{BrionMultiplicity} via a geometric argument.  For arbitrary $\P$, the proposition is \cite[Proof of Theorem 5.6]{CCRC}, which was obtained by using properties of ``polymatroid ideals'' in \cite[Chapter 12.6]{HerzogHibi}.
\end{proof}

Note that $Y_{\P}$ is defined over $\operatorname{Spec}\mathbb{Z}$, with an embedding in a product of projective spaces over $\operatorname{Spec}\ZZ$. 
Viewing the product of projective spaces as a homogeneous space, $Y_{\P}$ is a reduced union of Schubert varieties, and hence it is a  compatibly Frobenius split subvariety of the product of projective spaces when base changed to any positive characteristic field $\kk$ \cite[Proposition 1.2.1, Theorem 2.3.10]{BrionKumar}. Together with Proposition~\ref{prop:CM}, this gives the following strong cohomology vanishing results for $Y_{\P}$.

\begin{proposition}\label{prop:Fsplit}  
Let $\mathcal{L}$ be the restriction of a very ample line bundle from the product of projective spaces to $Y_{\P}$. Then, we have $H^i(Y_{\P}, \mathcal{L}^{\otimes k}) = 0$ for all $i > 0$ when $k \ge 0$, and $H^i(Y_\P, \mathcal L^{\otimes k}) = 0$ for all $i < \operatorname{rk}(P)$ when $k < 0$.  Moreover, $Y_{\P}$ is geometrically reduced and geometrically connected, and the section ring $R_{\mathcal{L}}^{\bullet} = \bigoplus_{k \ge 0} H^0(Y_{\P}, \mathcal{L}^{\otimes k})$ is generated in degree $1$.
\end{proposition}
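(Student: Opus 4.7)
The plan is to combine the Cohen-Macaulayness of $Y_{\P}$ from Proposition~\ref{prop:CM} with the compatible Frobenius splitting of $Y_{\P}$ inside $\prod_i \mathbb{P}^{a_i}$ furnished by Brion--Kumar \cite{BrionKumar}, and to propagate the resulting vanishing statements from positive characteristic to arbitrary characteristic via spreading out over $\operatorname{Spec}\ZZ$ and upper semicontinuity of cohomology. Geometric reducedness is immediate from the definition of $Y_{\P}$ as a reduced union, and geometric connectedness holds because every component $Y_{\mathbf{b}}$ contains the common ``origin'' point $([1, 0, \dotsc, 0], \dotsc, [1, 0, \dotsc, 0])$.

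The main tool for the cohomology vanishings is the Frobenius-splitting injection $H^i(Y_{\P}, \mathcal{M}) \hookrightarrow H^i(Y_{\P}, \mathcal{M}^{\otimes p^n})$, valid for every line bundle $\mathcal{M}$ and every $n \geq 0$. For $k \geq 1$, applying this with $\mathcal{M} = \mathcal{L}^{\otimes k}$ (which is ample) and then invoking Serre vanishing on the Frobenius iterates yields $H^i(Y_{\P}, \mathcal{L}^{\otimes k}) = 0$ for $i > 0$. For $k < 0$, the same injection reduces the vanishing to one for the very negative power $\mathcal{L}^{\otimes p^n k}$; Serre duality on the Cohen-Macaulay $Y_{\P}$ of pure dimension $r$ then converts $H^i(Y_{\P}, \mathcal{L}^{\otimes p^n k})$ into $H^{r-i}(Y_{\P}, \omega_{Y_{\P}} \otimes \mathcal{L}^{\otimes -p^n k})^{\vee}$, and Serre vanishing for the now-ample twist of $\omega_{Y_{\P}}$ kills the right-hand side whenever $r - i > 0$.

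For $k = 0$, I would exploit the combinatorial structure of $Y_{\P}$ directly. The reduced cover $Y_{\P} = \bigcup_{\mathbf{b} \in B(\P) \cap \ZZ^m} Y_{\mathbf{b}}$ satisfies $Y_{\mathbf{b}_0} \cap \dotsb \cap Y_{\mathbf{b}_p} = Y_{\min(\mathbf{b}_0, \dotsc, \mathbf{b}_p)}$, so every intersection is itself a product of projective linear subvarieties. Two key observations drive the argument: first, each such intersection contains the common origin, so the nerve of the cover is the full simplex and hence is contractible; and second, the structure sheaf of a product of projective spaces has vanishing higher cohomology. The hypercohomology spectral sequence of the associated \v{C}ech resolution thus collapses onto the reduced nerve cohomology, giving $H^i(Y_{\P}, \mathcal{O}_{Y_{\P}}) = 0$ for $i > 0$.

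Finally, the section ring $R_{\mathcal{L}}^{\bullet}$ is generated in degree $1$ because compatible Frobenius splitting makes the restriction map from the corresponding Segre--Veronese subring of the Cox ring of $\prod_i \mathbb{P}^{a_i}$ surjective in every degree, and that subring is itself generated in degree $1$. The main obstacle I anticipate is the $k = 0$ case: while $k \neq 0$ are handled uniformly by the Frobenius-splitting-plus-Serre-duality combination, the vanishing of $H^{>0}(Y_{\P}, \mathcal{O}_{Y_{\P}})$ cannot be obtained by Serre vanishing on Frobenius iterates alone, so one genuinely needs the \v{C}ech-combinatorial analysis above, together with a careful verification that the associated resolution is exact for the reduced union of coordinate subspaces at hand.
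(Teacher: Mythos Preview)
Your plan coincides with the paper's: compatible Frobenius splitting of $Y_\P$ inside $\prod_i\PP^{a_i}$ (as a reduced union of Schubert varieties) yields the vanishing for positive twists and the surjectivity of restriction from the ambient section ring, hence degree-$1$ generation; Cohen--Macaulayness plus Serre duality handles the negative twists; and the shared point $([1{:}0{:}\cdots{:}0],\ldots,[1{:}0{:}\cdots{:}0])$ gives connectedness. The paper compresses all of this into citations of \cite[Theorems~1.2.8(ii) and~1.2.9]{BrionKumar}, leaving both the Frobenius-iterate mechanism and the spread-out/semicontinuity passage to characteristic~$0$ implicit, whereas you spell them out.

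The one substantive divergence is your handling of $k=0$. You are right that the bare Frobenius injection $H^i(Y_\P,\mathcal M)\hookrightarrow H^i(Y_\P,\mathcal M^{\otimes p})$ is vacuous when $\mathcal M=\mathcal O$, so the ample-twist argument does not literally cover the trivial bundle. Your closed-cover \v Cech/Mayer--Vietoris route is a legitimate fix, but the exactness of the sheaf-level resolution for a closed cover is not automatic---it already fails for three general concurrent lines in $\mathbb A^2$---so the ``careful verification'' you flag is genuine content rather than boilerplate. For the coordinate-subspace components of $Y_\P$ it does go through (all relevant ideals are squarefree monomial, so their sums and intersections remain radical and the local inclusion--exclusion is valid), but this requires an argument. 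The paper does not isolate $k=0$: the vanishing $H^{>0}(Y,\mathcal O_Y)=0$ for compatibly split unions of Schubert varieties in $G/P$ is a standard companion of the cited splitting results, so the blanket reference to Brion--Kumar is intended to absorb this case without the combinatorial detour.
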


\begin{proof}
The cohomology vanishing follows from \cite[Theorem 1.2.8(ii), Theorem 1.2.9]{BrionKumar} because $Y_{\P}$ is Cohen--Macaulay. By \cite[Theorem 1.2.8(ii)]{BrionKumar}, $Y_{\P}$ is projectively normal in the embedding given by $\mathcal{L}$, so $R_{\mathcal{L}}^{\bullet}$ is generated in degree $1$. It remains to check that $Y_{\P}$ is geometrically reduced and geometrically connected. That it is geometrically reduced is obvious; it is geometrically connected because each component of $Y_{\P}$ contains the point $[1, 0, \dotsc, 0] \times [1, 0, \dotsc, 0] \times \dotsb \times [1, 0, \dotsc, 0]$. 
\end{proof}

\begin{proof}[Proof of Theorem~\ref{thm:macaulay}]
Let $\mathcal{L} = \bigotimes_{i = 1}^m \underline{\mathcal{L}}_{S_i}^{\otimes k_i}$ for some subsets $S_1, \dotsc, S_m$ of the ground set $E$ of the matroid $\M$ and integers $k_i >0$.
Let $\P$ be the restriction of the dragon-Hall--Rado polymatroid to the subsets $S_1, \dotsc, S_m$.
By Corollary~\ref{cor:wonderful}, we have that $\underline{\chi}(\M, \mathcal{L}^{\otimes \ell}) = \chi(Y_{\P},\mathcal{O}(k_1, \dotsc, k_m)^{\otimes \ell})$. Note that $\mathcal{O}(k_1, \dotsc, k_m)$ is the restriction of an ample divisor from the product of projective spaces to $Y_{\P}$. By Proposition~\ref{prop:Fsplit}, we have that $Y_\P$ and $\mathcal{O}(k_1, \dotsc, k_m)$ satisfy the conditions of Proposition~\ref{prop:ampleMacaulay}, including the generation of $\bigoplus_{k \ge 0} H^0(Y_\P, \mathcal{O}(k_1, \dotsc, k_m)^{\otimes k})$ in degree 1.  Hence, we conclude that $h^*(\M,\mathcal L)$ is a Macaulay vector.
\end{proof}

\subsection{Line bundles from polymatroids}\label{ssec:LofP}

We conjecture a generalization of Theorem~\ref{thm:macaulay}.  In Section~\ref{sec:applications}, we explain how the conjecture contains a question of Speyer \cite{Speyerg} as a special case, and how Theorem~\ref{thm:macaulay} answers the question for a new family of cases.
To do so, it is convenient to phrase the line bundles in $\underline K(\M)$ in terms of divisors in the non-augmented Chow ring $\underline A^\bullet(\M)$.

\begin{definition} \cite{FY}
Let $\M$ be a loopless matroid on a ground set $E$.  The \emph{non-augmented Chow ring} of $\M$ is the graded ring
\[
\underline A^\bullet(\M) = \frac{\ZZ[z_F : F \text{ a nonempty flat of }\M]}{\langle z_F z_{F'} : F \subseteq F' \text{ and } F \supseteq F' \rangle + \langle \sum_{F\ni i} z_F : i \in E\rangle}.
\]
An element of $\underline A^1(\M)$ is called a \emph{divisor class} on $\M$.
Equivalently, $\underline{A}^{\bullet}(\M)$ is the Chow ring of the toric variety $X_{\underline\Sigma_\M}$ of the non-augmented Bergman fan $\underline\Sigma_\M$ of $\M$.
\end{definition}

For a nonempty subset $S\subseteq E$, define an element $\underline h_S \in \underline A^1(\M)$ by
\[
\underline h_S = \sum_{F\supseteq S} - z_F.
\]
Because $\underline{K}(\M) = K(X_{\underline{\Sigma}_{\M}})$ and $\underline{A}^{\bullet}(\M) = A^{\bullet}(X_{\underline{\Sigma}_{\M}})$, one has a \emph{Chern class map} $c \colon \underline K(\M) \to \underline A^\bullet(\M)^{\times}$, a homomorphism from the additive group of $\underline{K}(\M)$ to the units in $\underline{A}^{\bullet}(\M)$, see \cite[Section 15.3]{FultonIntersection}. It has the characterizing property that 
\[
c(\underline{\mathcal L}_S) = 1+ \underline h_S \quad\text{for all nonempty $S\subseteq E$}.
\]
That is, we have $c_1(\underline{\mathcal L}_S) = \underline h_S$.  More generally, for a polymatroid $\P$ on $E$, let us define the line bundle $\underline{\mathcal L}_{B(\P)}$ in $\underline K(\M)$ via the property
\[
c_1(\underline{\mathcal L}_{B(\P)}) = \sum_F \big(\rk_\P(E\setminus F)  - \rk_\P(E) \big) z_F.
\]
One recovers $\underline{\mathcal L}_S$ via the polymatroid whose rank function is $\rk(I) = 1$ if $I \cap S \neq \emptyset$ and $0$ otherwise.

\begin{remark}\label{rem:chernclass}
These constructions have the following geometric origin.
When $\M$ has a realization $L\subseteq \kk^E$, the Chow ring $\underline A^\bullet(\M)$ coincides with the Chow ring of the wonderful variety $\underline W_L$ \cite{dCP95}, and the Chern class map $\underline K(\M) \to \underline A^\bullet(\M)$ coincides with the Chern class map $K(\underline W_L) \to A^\bullet(\underline W_L)$.

When furthermore $\M$ is the Boolean matroid $\mathrm{U}_{|E|,E}$, whose realization is $L = \kk^E$, the wonderful variety $\underline W_L$ is a toric variety $\underline X_E$ known as the \emph{permutohedral variety}.  In this case, under the standard correspondence between nef divisor classes on toric varieties and polytopes \cite[Chapter 6]{CLS}, the divisor class $c_1(\underline{\mathcal L}_{B(\P)})$ corresponds to the base polytope $B(\P)$.  Moreover, every nef divisor class is equal to $c_1(\underline{\mathcal L}_{B(\P)})$ for some polymatroid $\P$.  See \cite[Section 2.7]{BEST} and references therein.
\end{remark}

We conjecture the following positivity for $h^*$-vectors of line bundles from polymatroids.

\begin{conjecture}\label{conj:Mvector}
Let $\M$ be a loopless matroid on $E$, and let $\P$ be a polymatroid on $E$. Then, the $h^*$-vector  $h^*(\M, \underline{\mathcal{L}}_{B(\P)})$ is a Macaulay vector and is in particular nonnegative.
\end{conjecture}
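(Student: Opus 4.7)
The plan is to generalize the proof of Theorem~\ref{thm:macaulay} by constructing, for every loopless matroid $\M$ on $E$ and every polymatroid $\P$ on $E$, a projective variety $Z_{\M,\P}$ together with an ample line bundle $\mathcal A$ on it such that
\[
\underline\chi(\M, \underline{\mathcal L}_{B(\P)}^{\otimes k}) = \chi(Z_{\M,\P}, \mathcal A^{\otimes k}) \quad\text{for all } k \geq 0,
\]
and such that the pair $(Z_{\M,\P}, \mathcal A)$ satisfies the hypotheses of Proposition~\ref{prop:ampleMacaulay}: Cohen--Macaulayness, projective normality, geometric connectedness and reducedness, and the appropriate cohomology vanishings. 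Proposition~\ref{prop:ampleMacaulay} would then immediately yield the Macaulay vector property for $h^*(\M, \underline{\mathcal L}_{B(\P)})$.

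For the comparison, I would first treat the realizable case. When $\M$ has a realization $L\subseteq \kk^E$, the identification $\underline K(\M) \simeq K(\underline W_L)$ lets me take $Z_{\M,\P}$ to be the image of $\underline W_L$ under a suitable morphism into a product of Grassmannians $\prod_{\emptyset\subsetneq S \subseteq E} \operatorname{Gr}(\operatorname{rk}_\M(S),\kk^S)$, with $\mathcal A$ pulled back from a Pl\"ucker-type $\mathcal O(1)$ chosen so that its pullback to $\underline W_L$ equals $\underline{\mathcal L}_{B(\P)}$. The rational-singularities theorem of Berget--Fink would then give $Rp_* \mathcal O_{\underline W_L} = \mathcal O_{Z_{\M,\P}}$ and, via the projection formula, reduce the Euler characteristic computation on $\underline W_L$ to one on $Z_{\M,\P}$. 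Polymatroid valuativity in the style of Corollary~\ref{cor:valequal}, combined with Proposition~\ref{prop:eulervaluative} on the left and a Knutson-splitting-type recursion (as in Proposition~\ref{prop:KnutsonSplitting}) for $[\mathcal O_{Z_{\M,\P}}]$ on the right, should extend the identity to all matroids and polymatroids.

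For the structural properties, I would aim to identify $Z_{\M,\P}$ as a reduced union of Schubert subvarieties in the ambient product of Grassmannians, generalizing the way $Y_\P$ is a reduced union of products of linear subspaces in a product of projective spaces. Given such a description, Cohen--Macaulayness, compatible Frobenius splitting, and projective normality would follow from \cite{BrionKumar}, exactly as in the proof of Proposition~\ref{prop:Fsplit}. The main obstacle is precisely this construction: because $B(\P)$ is generally not a Minkowski sum of hypersimplices, $Z_{\M,\P}$ cannot be embedded in a product of projective spaces, and one needs a richer homogeneous target carrying a natural Schubert stratification matched to the bases of $\P$. Matching the combinatorics of $\P$ to the Schubert stratification of such a target, verifying that the resulting subvariety is reduced and has the correct $K$-class, and checking projective normality of $\mathcal A$ on $Z_{\M,\P}$, is where the bulk of the work will lie.
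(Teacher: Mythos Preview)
The statement is labeled a \emph{Conjecture} in the paper, and it is not proved there: the paper establishes only the special case where $\underline{\mathcal L}_{B(\P)}$ is simplicially positive (Theorem~\ref{thm:macaulay}) and a few further instances in Section~\ref{ssec:future} (the boolean matroid, $\P$ with $B(\P)=\nabla$, and $\P=\M^\perp$ for $\M$ realizable in characteristic~$0$). So there is no ``paper's own proof'' to compare against; what you have written is a research plan toward an open problem, not a proof.

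As a plan it has a genuine gap, which you yourself flag in the last paragraph. The entire argument rests on the existence of a variety $Z_{\M,\P}$ in a homogeneous space, realized as a reduced union of Schubert varieties, whose $\mathcal O(1)$ pulls back to $\underline{\mathcal L}_{B(\P)}$. No such construction is given, and there is no reason to expect one in general: for an arbitrary polymatroid $\P$ the class $c_1(\underline{\mathcal L}_{B(\P)})$ need not be a nonnegative combination of the $\underline h_S$, so $\underline{\mathcal L}_{B(\P)}$ need not be the pullback of an ample line bundle from any product of projective spaces or Grassmannians under a regular map from $\underline W_L$. The Berget--Fink rational singularities input and the Brion--Kumar Frobenius splitting input you invoke both require exactly such an embedding. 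In the one nontrivial case the paper can handle beyond Theorem~\ref{thm:macaulay}, namely $\P=\M^\perp$ with $\M$ realizable in characteristic~$0$ (Example~\ref{rem:BergetFink}), the argument uses special geometry of the space of visible contours and Kawamata--Viehweg vanishing rather than any Schubert-variety description; this already suggests that the uniform ``union of Schubert varieties'' picture you propose does not exist. Until a concrete candidate for $Z_{\M,\P}$ is produced and its properties verified, the proposal remains a heuristic rather than a proof.
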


Theorem~\ref{thm:macaulay} states that Conjecture~\ref{conj:Mvector} holds when $c_1(\underline{\mathcal L}_{B(\P)})$ is a nonnegative linear combination of the $\underline h_S$.
Several other cases in which Conjecture~\ref{conj:Mvector} holds are discussed in Section~\ref{ssec:future}.

\subsection{Degree of Snapper polynomials and numerical dimension}\label{ssec:numericaldim}
To study $h^*$-vectors arising from line bundles $\underline{\mathcal L}_{B(\P)}$ in Conjecture~\ref{conj:Mvector}, one needs some tools to understand the degree of the Snapper polynomial, since the degree is essential in the definition of $h^*(\M, \underline{\mathcal L}_{B(\P)})$.
One such tool is given in terms of the following.

\begin{definition}
The \emph{numerical dimension} of a line bundle $\mathcal{L}$ in $\underline{K}(\M)$ is the largest nonnegative integer $k$ such that $c_1(\mathcal{L})^k \not= 0$ in $\underline A^\bullet(\M)$.
\end{definition}

Our main result for numerical dimensions is the following.

\begin{theorem}\label{thm:numericaldim}
Let $\M$ be a loopless matroid or rank $r$ on a ground set $E$.
\begin{enumerate}[label = (\arabic*)]
\item \label{numericaldim:bound}For $\mathcal L$ a line bundle in $\underline K(\M)$, the degree of the Snapper polynomial $\underline\chi(\M, \mathcal L^{\otimes t})$ is at most the numerical dimension of $c_1(\mathcal L)$.  Moreover, the degree equals $r-1$ if and only if the numerical dimension is $r-1$.
\item \label{numericaldim:fulldim}For $\P$ a polymatroid on $E$ such that the base polytope $B(\P)$ is full dimensional (i.e., $(|E|-1)$-dimensional), then its numerical dimension is $r-1$, so the degree of the Snapper polynomial of $\underline{\mathcal L}_{B(\P)}$ is $r-1$.
\end{enumerate}
\end{theorem}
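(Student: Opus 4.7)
The plan is to prove (1) via a Hirzebruch--Riemann--Roch-style formula for $\underline\chi(\M,-)$ and to reduce (2) to showing $D_\P^{r-1}\neq 0$ in $\underline A^{r-1}(\M)$, which is then handled by interpreting the intersection number on the permutohedral variety.

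For (1), since $\underline K(\M)=K(X_{\underline\Sigma_\M})$ for the smooth toric variety $X_{\underline\Sigma_\M}$ of dimension $r-1$, the rational Chern character is an isomorphism $\underline K(\M)\otimes\QQ\cong\underline A^\bullet(\M)\otimes\QQ$; combined with Poincar\'e duality for the matroid Chow ring (making $\deg_\M\colon\underline A^{r-1}(\M)\to\ZZ$ an isomorphism), one obtains a unique ``Todd-like'' class $\tau_\M\in\underline A^\bullet(\M)\otimes\QQ$ with constant term $1$ such that
\[
\underline\chi(\M,\mathcal L^{\otimes t})=\deg_\M\bigl(e^{tD}\cdot\tau_\M\bigr),\quad D=c_1(\mathcal L).
\]
Using $\underline A^k(\M)=0$ for $k>r-1$, this expands to $\sum_{k=0}^{r-1}\tfrac{t^k}{k!}\deg_\M(D^k\cdot\tau_{\M,r-1-k})$. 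If $k$ exceeds the numerical dimension $\nu$ of $D$, then $D^k=0$ and that term vanishes, giving the degree bound. Setting $k=r-1$ and using $\tau_{\M,0}=1$, the coefficient of $t^{r-1}$ is $\tfrac{1}{(r-1)!}\deg_\M(D^{r-1})$, which by Poincar\'e duality vanishes iff $D^{r-1}=0$; this gives the ``moreover'' biconditional.

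For (2), by (1) it suffices to prove $D_\P^{r-1}\neq 0$ in $\underline A^{r-1}(\M)$ when $B(\P)$ is full-dimensional, where $D_\P=c_1(\underline{\mathcal L}_{B(\P)})$. The fan $\underline\Sigma_\M$ is a subfan of the braid fan $\underline\Sigma_E$, giving an open inclusion $X_{\underline\Sigma_\M}\hookrightarrow\underline X_E$ into the smooth projective permutohedral variety and an associated effective nonzero Bergman cycle $[\underline\Sigma_\M]\in A_{r-1}(\underline X_E)$. By the projection formula,
\[
\deg_\M\bigl(D_\P^{r-1}\bigr)=\widetilde D_\P^{\,r-1}\cdot[\underline\Sigma_\M]\quad\text{in}\quad A^\bullet(\underline X_E),
\]
where $\widetilde D_\P\in A^1(\underline X_E)$ is the toric divisor class of $B(\P)$. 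This class is nef on $\underline X_E$ because $B(\P)$'s normal fan is refined by the braid fan, and when $B(\P)$ is full-dimensional it is moreover big, as $\widetilde D_\P^{\,|E|-1}=(|E|-1)!\operatorname{vol}(B(\P))>0$.

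It remains to establish $\widetilde D_\P^{\,r-1}\cdot[\underline\Sigma_\M]>0$. One approach is to perturb $\widetilde D_\P$ by a small multiple of an ample class on $\underline X_E$, obtaining a strictly ample class whose intersection with the effective Bergman cycle is strictly positive, and combine this with a Bernstein--Kushnirenko-type mixed volume lower bound (guaranteed positive by full-dimensionality of $B(\P)$) to pass to the limit. An alternative is to use valuativity of $\P\mapsto\deg_\M(D_\P^{r-1})$ together with the multisymmetric lift (as in Section~\ref{ssec:proofprojection}) to reduce to the case where $\M$ and $\P$ are simultaneously realizable over $\CC$, in which case $\underline W_L$ is a smooth projective variety of dimension $r-1$ on which $\underline{\mathcal L}_{B(\P)}$ is nef and big, forcing positive top self-intersection. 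The main obstacle is precisely this positivity step: upgrading ``nef and big on $\underline X_E$'' to strict positivity of the top intersection with the Bergman cycle, which in essence says that a full-dimensional base polytope has positive matroidal mixed volume against any loopless matroid on the same ground set.
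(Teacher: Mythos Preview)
Your argument for part~(1) is correct and is essentially the same Hirzebruch--Riemann--Roch computation the paper carries out: expand $\underline\chi(\M,\mathcal L^{\otimes t})=\underline\deg_\M(e^{tD}\cdot\underline{\operatorname{Todd}}_\M)$ and use that the degree~$0$ piece of the Todd class is~$1$.

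Part~(2), however, has a genuine gap, and you have essentially acknowledged it yourself. Neither of your two proposed routes closes it. For the perturbation approach, you correctly note that $(\widetilde D_\P+\epsilon A)^{r-1}\cdot[\underline\Sigma_\M]>0$ for $\epsilon>0$ since the class is ample and the Bergman cycle is effective and nonzero; but nothing prevents this from tending to~$0$ as $\epsilon\to 0$. Bigness of $\widetilde D_\P$ on $\underline X_E$ is a statement about its $(|E|-1)$-fold self-intersection, and there is no Bernstein--Kushnirenko inequality that forces the $(r-1)$-fold intersection against a fixed lower-dimensional effective cycle to stay bounded away from zero. For the valuativity approach, strict positivity is not a valuative property, so one cannot reduce to realizable $\M$ (or~$\P$) while preserving the conclusion $\deg_\M(D_\P^{r-1})>0$. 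Even in the realizable case, your assertion that $\underline{\mathcal L}_{B(\P)}$ is big on $\underline W_L$ is unjustified: bigness on $\underline X_E$ does not pass to the subvariety $\underline W_L$ by restriction.

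The paper supplies exactly the missing ingredient, namely the Hodge--Riemann relations for the matroid Chow ring $\underline A^\bullet(\M)$ from \cite{AHK18}. Since $\widetilde D_\P$ is nef and big on $\underline X_E$, one may write $\widetilde D_\P=A+E$ in $A^1(\underline X_E)_\QQ$ with $A$ ample and $E$ effective (\cite[Corollary~2.2.7]{Lazarsfeld1}), and then restrict to $\underline A^1(\M)$. The Hodge--Riemann relations in degree~$0$ give $\underline\deg_\M(A^{r-1})>0$, and the mixed form gives $\underline\deg_\M(N_1\cdots N_{r-2}\cdot E)\ge 0$ whenever the $N_i$ are restrictions of nef classes. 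One then proves $\underline\deg_\M(D_\P^{\,k}A^{r-1-k})>0$ by induction on~$k$, expanding one factor of $D_\P$ as $A+E$. This is precisely the step your proposal was missing: positivity on $\underline A^\bullet(\M)$ comes not from toric geometry on $\underline X_E$ but from the combinatorial Hodge theory of the matroid Chow ring.
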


To prove Theorem~\ref{thm:numericaldim}\ref{numericaldim:bound}, we develop a version of the Hirzebruch--Riemann--Roch theorem for $K$ and Chow rings of matroids.
For this, we recall that the Chow ring $\underline A^\bullet(\M)$ is equipped with a degree map $\underline{\deg}_\M \colon \underline A^{r-1} \overset\sim\to \ZZ$
that satisfies Poincar\'e duality.   See \cite[Section 6]{AHK18} for details.

\begin{proposition}
There is a ring homomorphism ${\operatorname{ch}} \colon \underline{K}(\M) \to \underline{A}(\M)_{\mathbb{Q}}$ which induces an isomorphism $\underline{K}(\M)_{\mathbb{Q}} \to \underline{A}^{\bullet}(\M)_{\mathbb{Q}}$ defined by
\[
{\operatorname{ch}}([\mathcal{L}]) = \exp(c_1(\mathcal L)) = 1 + c_1(\mathcal{L}) + c_1(\mathcal{L})^2/2! + \dotsb.
\]
There is a class $\underline{\operatorname{Todd}}_{\M} \in \underline{A}^{\bullet}(\M)_{\mathbb{Q}}$ such that, for any $\xi \in \underline{K}(\M)_{\mathbb{Q}}$,
\[
\underline{\chi}(\M, \xi) = \underline{\deg}_{\M} \big({\operatorname{ch}}(\xi) \cdot \underline{\operatorname{Todd}}_{\M}\big).
\]
Moreover, the degree 0 part of $\underline{\operatorname{Todd}}_\M$ is 1.
\end{proposition}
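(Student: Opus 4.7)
The plan is to deduce the statement from $\underline K(\M) = K(X_{\underline\Sigma_\M})$ and $\underline A^\bullet(\M) = A^\bullet(X_{\underline\Sigma_\M})$ for the smooth toric variety $X_{\underline\Sigma_\M}$ of the (Feichtner--Yuzvinsky) Bergman fan. For any smooth scheme the Chern character is a well-defined ring homomorphism $K \to A^\bullet_{\mathbb Q}$ determined on line bundles by $\operatorname{ch}([\mathcal L]) = \exp(c_1(\mathcal L))$, cf.\ \cite[Example 15.1.2]{FultonIntersection}. Since $\underline K(\M)$ is generated as a ring by $\{[\underline{\mathcal L}_S]\}$, this uniquely characterizes $\operatorname{ch}$ and gives $\operatorname{ch}([\underline{\mathcal L}_S]) = \exp(\underline h_S)$. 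That $\operatorname{ch}$ induces an isomorphism after $\otimes \mathbb Q$ is standard for smooth toric varieties: both rings admit $\mathbb Q$-bases indexed by cones of the fan, and $\operatorname{ch}$ is upper-triangular with respect to the codimension-of-support filtration, since $\exp(c_1(\mathcal L)) - c_1(\mathcal L)^k/k!$ lies in codimensions strictly greater than $k$.

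To construct $\underline{\operatorname{Todd}}_\M$, I would use that the composition $\underline\chi(\M,-) \circ \operatorname{ch}^{-1} \colon \underline A^\bullet(\M)_{\mathbb Q} \to \mathbb Q$ is a linear functional, and apply Poincar\'e duality for $\underline A^\bullet(\M)$ from \cite{AHK18} (the pairing $(\alpha,\beta)\mapsto \underline{\deg}_\M(\alpha\beta)$ is perfect on $\underline A^\bullet(\M)_{\mathbb Q}$, extending $\underline{\deg}_\M$ to the full Chow ring by taking the top-degree component). The functional is then uniquely of the form $\alpha \mapsto \underline{\deg}_\M(\alpha \cdot \underline{\operatorname{Todd}}_\M)$ for a unique class $\underline{\operatorname{Todd}}_\M \in \underline A^\bullet(\M)_{\mathbb Q}$, and composing with $\operatorname{ch}$ yields the desired HRR formula.

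To verify $(\underline{\operatorname{Todd}}_\M)_0 = 1$, I would apply the HRR formula to $\mathcal L = \underline{\mathcal L}_E$. By Proposition~\ref{prop:DHR} applied with the single subset $S = E$, the dragon-Hall--Rado condition forces $k \leq r-1$, yielding
\[
\underline\chi(\M, \underline{\mathcal L}_E^{\otimes t}) = \sum_{k=0}^{r-1} t^{(k)} = t^{[r-1]} = \binom{t+r-1}{r-1},
\]
whose leading coefficient in $t$ equals $1/(r-1)!$. Expanding the right-hand side of HRR and isolating the degree-$(r-1)$ component, the leading coefficient in $t$ equals $\underline{\deg}_\M(\underline h_E^{r-1})\cdot (\underline{\operatorname{Todd}}_\M)_0/(r-1)!$. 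Since $\underline{\deg}_\M(\underline h_E^{r-1}) = 1$ (in the realizable case this is the degree of the birational projection $\underline W_L \to \mathbb P L$, and for general matroids it follows by valuativity or by a direct check in the Feichtner--Yuzvinsky presentation), we conclude $(\underline{\operatorname{Todd}}_\M)_0 = 1$.

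The most delicate point is the rational isomorphism of $\operatorname{ch}$ on the non-complete smooth toric variety $X_{\underline\Sigma_\M}$, but this reduces to a routine triangularity argument given the standard cone-indexed bases of both sides; the remaining ingredients (Poincar\'e duality and the computation of the leading coefficient) are direct applications of results already available in the matroid literature.
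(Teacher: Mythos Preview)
Your proposal is correct and follows essentially the same approach as the paper: identify $\underline K(\M)$ and $\underline A^\bullet(\M)$ with the $K$-ring and Chow ring of $X_{\underline\Sigma_\M}$, invoke the Chern character (the paper cites \cite[Example~15.2.16]{FultonIntersection} directly rather than spelling out a triangularity argument), produce $\underline{\operatorname{Todd}}_\M$ from the linear functional $\underline\chi(\M,\operatorname{ch}^{-1}(-))$ via Poincar\'e duality \cite{AHK18}, and pin down the degree-$0$ part by computing the leading term of $\underline\chi(\M,\underline{\mathcal L}_E^{\otimes t})$ via Proposition~\ref{prop:DHR}. The only cosmetic difference is that the paper verifies $\underline{\deg}_\M(\underline h_E^{r-1})=1$ by rewriting $-z_E=\sum_{i\in F\subsetneq E} z_F$ and citing \cite[Proposition~5.8]{AHK18}, whereas you appeal to valuativity or a direct Feichtner--Yuzvinsky computation.
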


\begin{proof}
We first recall $\underline{K}(\M) = K(X_{{\underline{\Sigma}_{\M}}})$ and $\underline{A}^{\bullet}(\M) = A^{\bullet}(X_{\underline{\Sigma}_{\M}})$, i.e., the $K$ and Chow rings of the toric variety $X_{\underline{\Sigma}_\M}$ (respectively).
Hence, that the Chern character map $\operatorname{ch}$ is well-defined and is an isomorphism after tensoring with $\mathbb{Q}$ is a general fact about algebraic varieties \cite[Example 15.2.16]{FultonIntersection}. 
Because $\underline{K}(\M)$ is generated by classes of line bundles \cite[Theorem 5.2]{LLPP}, the formula ${\operatorname{ch}}([\mathcal{L}]) = \exp(c_1(\mathcal{L}))$ determines $\operatorname{ch}$. 
By \cite[Theorem 6.19]{AHK18}, the pairing $\underline{A}^{\bullet}(\M)_{\mathbb{Q}} \otimes \underline{A}^{\bullet}(\M)_{\mathbb{Q}} \to \mathbb{Q}$ given by $(x, y) \mapsto \underline{\deg}_{\M}(x \cdot y)$ is a perfect pairing. Therefore there is some class $\underline{\operatorname{Todd}}_{\M} \in \underline{A}^{\bullet}(\M)_{\mathbb{Q}}$ such that the linear functional $x \mapsto \underline{\chi}(\M, \operatorname{ch}^{-1}(x))$ on $\underline{A}^{\bullet}(\M)_{\mathbb{Q}}$ is given by $x \mapsto \underline{\deg}_{\M}(x \cdot \underline{\operatorname{Todd}}_{\M})$.  Lastly, the degree 0 part of $\underline{\operatorname{Todd}}_\M$, which is some number in $\QQ$, must be 1 because Proposition~\ref{prop:DHR} implies that the leading term of the polynomial $\underline\chi(\M,\underline{\mathcal L}_E^{\otimes t})$ is $t^{r-1}/(r-1)!$, whereas
\begin{align*}
\underline{\deg}_\M\big(c_1(\underline{\mathcal L}_E)^{r-1}\big) = \underline{\deg}_\M\big((-z_E)^{r-1}\big) 
&= \underline{\deg}_\M\Big( \big(\sum_{i \in F \subsetneq E} z_F\big)^{r-1}\Big) \quad\text{for any fixed $i\in E$}\\
&= 1 \quad\text{by \cite[Proposition 5.8]{AHK18}.}\qedhere
\end{align*}
\end{proof}

\begin{proof}[Proof of Theorem~\ref{thm:numericaldim}]
Let $\mathcal{L}$ be a line bundle of numerical dimension $d$. Because $c_1(\mathcal{L}^{\otimes t}) = tc_1(\mathcal{L})$, we have that
$$\underline{\chi}(\M, \mathcal{L}^{\otimes t}) = \underline{\deg}_{\M}((1 + tc_1(\mathcal{L}) + t^2c_1(\mathcal{L})^2/2! + \dotsb) \cdot \underline{\operatorname{Todd}}_{\M}).$$
Since $c_1(\mathcal{L})^{d+1} = 0$, we see that the right-hand side is a polynomial in $t$ whose leading term is $t^\ell \underline{\deg}_{\M}(c_1(\mathcal{L})^\ell \cdot \underline{\operatorname{Todd}}_{\M})/\ell!$  for the largest $0\leq \ell\leq d$ such that $ \underline{\deg}_{\M}(c_1(\mathcal{L})^\ell \cdot \underline{\operatorname{Todd}}_{\M}) \neq 0$.
Moreover, because the degree 0 part of $\underline{\operatorname{Todd}}_\M$ is 1, we have
$$\underline{\chi}(\M, \mathcal{L}^{\otimes t}) = \underline{\deg}_{\M}(c_1(\mathcal{L})^{r-1}) \frac{t^{r-1}}{(r-1)!} + O(t^{r-2}).$$ 
Thus, $\mathcal{L}$ has numerical dimension $r-1$ if and only if the Snapper polynomial has degree $r-1$.  We have proven the first statement \ref{numericaldim:bound}.

For second statement \ref{numericaldim:fulldim}, we only need show that the numerical dimension of $\underline{\mathcal L}_{B(\P)}$ is $r-1$ if $B(\P)$ is full dimensional.
When $B(\P)$ is full dimensional, the line bundle $\underline{\mathcal L}_{B(\P)}$ in $K(\mathrm{U}_{|E|,E})$ of the boolean matroid corresponds to a nef and big line bundle on the projective toric variety $\underline X_E$ (see Remark~\ref{rem:chernclass}).
By \cite[Corollary 2.2.7]{Lazarsfeld1}, we can write the first Chern class as the sum of an ample class and an effective divisor class (inside $A^{\bullet}(\underline{X}_E) \otimes \mathbb{Q}$). Restricting this to $\underline{A}^{\bullet}(\M)$, we get that $c_1(\underline{\mathcal{L}}_{B(\P)}) = A + E$, where $A$ is the restriction of an ample class from $\underline{X}_E$ and $E$ is the restriction of an effective class.

We now prove by induction on $k$ that $\underline{\deg}_{\M}(c_1(\underline{\mathcal{L}}_{B(\P)})^k A^{r - 1 - k}) > 0$, using Proposition~\ref{prop:combample} stated below.
The case $k = 0$ is Proposition~\ref{prop:combample}(1).
For $k > 0$, Proposition~\ref{prop:combample}(2) gives that
\begin{equation*}\begin{split}
\underline{\deg}_{\M}(c_1(\underline{\mathcal{L}}_{B(\P)})^k A^{r - 1 - k}) &= \underline{\deg}_{\M} (c_1(\underline{\mathcal{L}}_{B(\P)})^{k-1} A^{r - k}) + \underline{\deg}_{\M} (c_1(\underline{\mathcal{L}}_{B(\P)})^{k-1} E A^{r - 1 - k}) \\  
&\ge \underline{\deg}_{\M} (c_1(\underline{\mathcal{L}}_{B(\P)})^{k-1} A^{r - k}),
\end{split}\end{equation*}
which is positive by induction. 
\end{proof}

\begin{proposition}\label{prop:combample}
Let $\M$ be a loopless matroid of rank $r$. 
\begin{enumerate}
\item Let $A \in \underline{A}^1(\M)$ be the restriction of an ample class from $\underline{X}_E$. Then $\underline{\deg}_{\M}(A^{r-1}) > 0$. 
\item Let $\P_1, \dotsc, \P_{r-2}$ be polymatroids. Then, for any class $E \in \underline{A}^1(\M)$ which is a restriction of an effective divisor class on $\underline{X}_E$, $\underline{\deg}_{\M}(c_1(\underline{\mathcal{L}}_{\P_1}) \cdot \dotsb c_1(\underline{\mathcal{L}}_{\P_{r-2}}) \cdot E) \ge 0$. 
\end{enumerate}
\end{proposition}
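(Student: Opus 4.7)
The plan is to reduce both statements to classical positivity of intersection numbers on the smooth projective permutohedral toric variety $\underline{X}_E$, working with the \emph{Bergman class} of $\M$. Since $\underline{\Sigma}_\M$ is a subfan of the permutohedral fan $\Sigma_E$, there is an open toric immersion $X_{\underline{\Sigma}_\M} \hookrightarrow \underline{X}_E$ and a surjective restriction $A^\bullet(\underline{X}_E) \twoheadrightarrow \underline{A}^\bullet(\M)$. Introducing $\Delta_\M = \sum_\sigma [V(\sigma)] \in A_{r-1}(\underline{X}_E)$, the sum over maximal cones $\sigma$ of $\underline{\Sigma}_\M$ of the associated torus orbit closures, the standard compatibility between tropical and toric intersection theory gives
$$\underline{\deg}_\M(\alpha) = \deg_{\underline{X}_E}(\tilde{\alpha} \cdot \Delta_\M)$$
for any $\tilde{\alpha} \in A^{r-1}(\underline{X}_E)$ whose restriction to $\underline{A}^{\bullet}(\M)$ equals $\alpha$. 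The cycle $\Delta_\M$ is nonzero and effective because any loopless matroid admits maximal flags of flats.

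For (1), I would lift $A$ to an ample class $\tilde{A}$ on $\underline{X}_E$, so that $\underline{\deg}_\M(A^{r-1}) = \deg_{\underline{X}_E}(\tilde{A}^{r-1} \cdot \Delta_\M)$. This is the degree of the nonzero effective $(r-1)$-cycle $\Delta_\M$ in the projective embedding defined by $\tilde{A}$, and is therefore strictly positive. For (2), I would lift $E$ to an effective divisor class $\tilde{E}$ on $\underline{X}_E$, and each $c_1(\underline{\mathcal{L}}_{\P_j})$ to a nef divisor class $\tilde{N}_j \in A^1(\underline{X}_E)$ via the polymatroid-to-polytope dictionary of Remark~\ref{rem:chernclass}. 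The desired inequality then becomes
$$\deg_{\underline{X}_E}\bigl(\tilde{N}_1 \cdots \tilde{N}_{r-2} \cdot \tilde{E} \cdot \Delta_\M\bigr) \ge 0.$$
I would deduce this from Kleiman's theorem: the class $\tilde{E} \cdot \Delta_\M$ is pseudo-effective, obtained as a limit of effective classes $\tfrac{1}{m}[D_m \cap \Delta_\M]$, where $D_m$ is a generic effective divisor in $|m(\tilde{E} + \varepsilon H)|$ (for some ample $H$) meeting $\Delta_\M$ properly, and then sending $\varepsilon \to 0$; and any product of nef divisor classes with a pseudo-effective cycle of complementary dimension is nonnegative.

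The main obstacle is to pin down the identification of $\underline{\deg}_\M$ with intersection against $\Delta_\M$, but this is a standard feature of the Bergman fan construction of $\underline{A}^{\bullet}(\M)$ within the AHK/tropical framework and requires only unpacking the definitions. The pseudo-effectiveness step in (2) is a routine moving argument on the smooth projective variety $\underline{X}_E$.
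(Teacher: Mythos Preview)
Your overall strategy---pass to the permutohedral variety and pair against a ``Bergman class''---is reasonable, but the execution has two real problems. First, your formula $\Delta_\M = \sum_\sigma [V(\sigma)]$ is not the class you want. A maximal cone $\sigma$ of $\underline\Sigma_\M$ has dimension $r-1$, so $[V(\sigma)] \in A_{|E|-r}(\underline X_E)$, not $A_{r-1}$; the dimensions only coincide when $|E| = 2r-1$. Even then the class is wrong: for $\M = U_{2,3}$ one has $\underline\Sigma_\M = \{\rho_1,\rho_2,\rho_3\}$, and on the del Pezzo $\underline X_E$ each $D_{\rho_i}$ is a $(-1)$-curve disjoint from the others, so $D_{\rho_1}\cdot\bigl([V(\rho_1)]+[V(\rho_2)]+[V(\rho_3)]\bigr) = -1$, whereas $\underline{\deg}_{U_{2,3}}(z_1) = 1$. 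The class that actually satisfies your degree identity is the Poincar\'e dual of the Minkowski weight of the Bergman fan, and it is \emph{not} given by summing orbit closures over Bergman cones.

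Second, even granting the correct (effective) Bergman class, your Kleiman-style moving argument for (2) does not go through. The product of an effective divisor with an effective cycle need not be pseudo-effective: on $\operatorname{Bl}_p\mathbb P^3$ take the exceptional divisor $E$ and a line $Z\subset E$; then $E\cdot Z = -1$. Your perturbation $\tilde E + \varepsilon H$ is not ample for small $\varepsilon$, so there is no reason a generic member of $|m(\tilde E+\varepsilon H)|$ avoids the components of the Bergman class. To make your approach work you would have to use that the effective cone of the toric variety $\underline X_E$ is generated by the torus-invariant divisors $D_\rho$, and then check that each $D_\rho\cdot[\M]$ is again (a pushforward of) a Bergman class of a smaller matroid, hence effective by induction. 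But this last step is exactly the ``star of a ray is a product of Bergman fans'' input from \cite{AHK18} that the paper invokes, after which the inequality is the degree-$0$ mixed Hodge--Riemann relation. So once the gaps are filled, your argument collapses to the paper's.
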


One can deduce the proposition as a general statement about combinatorially nef divisors on a fan with nonnegative Minkowski weights.
To avoid developing such notions here, we indicate a proof in terms of the Hodge--Riemann relations for $\underline A(\M)$ proven in \cite{AHK18}.

\begin{proof}

The first statement is the Hodge--Riemann relations in degree 0 for $\underline A(\M)$ \cite[Theorem 1.4]{AHK18}. The second statement is a consequence of the mixed Hodge--Riemann relations in degree 0 \cite[Theorem 8.9]{AHK18}, when one notes that the divisors $c_1(\underline{\mathcal L}_{B(\P_i)})$ are nef (thus, a limit of ample classes), and that the star of a ray in the Bergman fan of the matroid is a product of Bergman fans of matroids \cite[Proposition 3.5]{AHK18}.
\end{proof}

\section{Applications, Examples, and Problems}\label{sec:applications}

In Section~\ref{ssec:gpoly}, we study a question of Speyer \cite{Speyerg} as an application of results developed in the previous section.
It is for this application that we have focused on the non-augmented setting, although analogous statements for the augmented setting also hold.
Examples for Conjecture~\ref{conj:Mvector} and some further general properties of $h^*$-vectors of matroids are given in Section~\ref{ssec:future}, along with future directions.

\subsection{Application to Speyer's $g$-polynomial}\label{ssec:gpoly}

In this section, we apply Theorem~\ref{thm:macaulay} to study Speyer's \emph{$g$-polynomial} of a matroid \cite{Speyerg}. 
For a loopless and coloopless matroid $\M$ of rank $r$ on $[n]$, the $g$-polynomial $g_{\M}(t)$ is a polynomial of degree at most $r$ defined in terms of the $K$-theory of the Grassmannian $Gr(r, n)$, first defined for matroids realizable over a field of characteristic $0$ in \cite{Speyerg} and then for all matroids in \cite{FinkSpeyer}.

\smallskip
An outstanding problem about the $g$-polynomial is to show that it always has nonnegative coefficients.
In \cite{Speyerg}, Speyer used the Kawamata--Viehweg vanishing theorem to show the nonnegativity for matroids realizable over a field of characteristic $0$.
This allowed him to bound the number of cells of each dimension in a subdivision of the hypersimplex into matroid polytopes when all of the cells correspond to matroids realizable in characteristic $0$.
Nonnegativity of $g_\M(t)$ for all matroids would bound the complexity of any such subdivision in general.
The nonnegativity was proved for all sparse paving matroids in \cite[Theorem 13.16]{FerroniSchroter}.
Using Theorem~\ref{thm:macaulay} we give a new infinite family of matroids for which the nonnegativity holds.

\medskip
We begin by explaining how the nonnegativity of the coefficients of the $g$-polynomial is a special case of Conjecture~\ref{conj:Mvector}.
For a loopless and coloopless matroid $\M$ of rank $r$, let $\omega(\M)$ be the $t^r$ coefficient of $g_{\M}(t)$.  In forthcoming work, Alex Fink, Kris Shaw, and David Speyer show the following result. 

\begin{proposition}\label{prop:omegatog}
Suppose that $\omega(\M) \ge 0$ for all connected matroids. Then all coefficients of $g_{\M}(t)$ are nonnegative for all loopless and coloopless matroids. 
\end{proposition}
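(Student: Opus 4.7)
The plan is to reduce the nonnegativity of every coefficient of $g_\M(t)$ to the hypothesized nonnegativity of the leading coefficient on connected matroids, by establishing a combinatorial formula that expresses each lower coefficient as a sum of products of $\omega$-values of connected matroids. Since this is precisely the forthcoming Fink--Shaw--Speyer result cited immediately before the statement, my proposal is to sketch the shape such an argument would have to take.

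First I would assemble the structural features of the $g$-polynomial that are already in the literature: it is a matroid valuation (valuative under matroid polytope subdivisions), it is multiplicative under direct sums, i.e.\ $g_{\M_1\oplus \M_2}(t) = g_{\M_1}(t)g_{\M_2}(t)$, and for loopless, coloopless $\M$ of rank $r$ its coefficient in degree $r$ is, by definition, $\omega(\M)$. Extending $\omega(\M) = 0$ whenever $\M$ has a loop or a coloop, multiplicativity of $\omega$ under direct sums then allows the ``connected'' hypothesis on $\omega$ to propagate to all loopless, coloopless matroids.

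The core step is to prove a formula of the shape
\[
g_\M(t) = \sum_{\mathcal{F}} c_{\mathcal{F}} \prod_i \omega(\mathrm{N}_{i,\mathcal F}) \, t^{\sum_i \operatorname{rk}(\mathrm{N}_{i,\mathcal F})},
\]
where $\mathcal{F}$ ranges over a combinatorial index set attached to $\M$ (the natural candidates being flags of flats, matroid quotients of $\M$, or cells in a canonical polytope subdivision), the coefficients $c_{\mathcal{F}}$ are manifestly nonnegative integers, and each $\mathrm{N}_{i,\mathcal{F}}$ is a connected minor (or quotient) of $\M$. A natural source for such a formula is to expand the $K$-theoretic class on the Grassmannian that defines $g_\M$ in a basis indexed by connected matroids; the pushforward along a flag-variety fibration, or along a Mnëv-type parameter, would then produce the required $t$-exponents. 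Once such a formula is in hand, the conclusion is immediate: nonnegativity of $c_{\mathcal{F}}$ combined with the hypothesis $\omega(\mathrm{N}) \ge 0$ for connected $\mathrm{N}$ forces every coefficient of $g_\M(t)$ to be nonnegative.

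The main obstacle is establishing that the structure constants $c_{\mathcal{F}}$ are nonnegative and that only connected minors appear. Valuativity of $g_\M(t)$ at least reduces this to verifying the formula on a generating set of the matroid valuative group, such as Schubert matroids or complex-realizable matroids, which is where Speyer's original Kawamata--Viehweg argument could be leveraged; but producing a purely combinatorial, coefficient-by-coefficient positive expansion valid for all matroids is the delicate point, and is presumably the principal technical content of the Fink--Shaw--Speyer work.
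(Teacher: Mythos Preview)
The paper does not prove this proposition: it is stated without proof and attributed to forthcoming work of Fink, Shaw, and Speyer. There is therefore no ``paper's own proof'' to compare your proposal against, and you have correctly identified this situation yourself.

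Your sketch is a plausible outline of the shape such an argument might take, but as you acknowledge, it is not a proof. The central gap you name---producing a positive expansion $g_\M(t) = \sum_{\mathcal F} c_{\mathcal F}\prod_i \omega(\mathrm N_{i,\mathcal F})\,t^{\sum_i \operatorname{rk}(\mathrm N_{i,\mathcal F})}$ with $c_{\mathcal F}\ge 0$ and only connected $\mathrm N_{i,\mathcal F}$---is the entire content of the result, and nothing in your outline establishes it. In particular, your suggestion to use valuativity to reduce to Schubert or realizable matroids does not help with \emph{positivity}: valuative identities allow one to verify an equality on a spanning set, but they do not transfer inequalities, since the expansion of an arbitrary matroid in terms of Schubert matroids involves signs. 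Likewise, the appeal to ``pushforward along a flag-variety fibration'' is too vague to constitute an argument. So what you have written is an honest statement of what would need to be shown, not a proof of it.
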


The following result was communicated to the authors by Alex Fink, Kris Shaw, and David Speyer.

\begin{proposition}\label{prop:KOmega}
Let $\M$ be a matroid of rank $r$ with $c$ connected components, and denote by $B(\M^\perp)$ the base polytope of the dual matroid $\M^\perp$ of $\M$. Then, we have
$$\omega(\M) = (-1)^{r - c} \underline{\chi}(\M, \underline{\mathcal{L}}_{B(\M^{\perp})}^{-1}).$$
\end{proposition}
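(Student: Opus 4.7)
The plan is to reduce to the realizable case via valuativity and then compute directly. Both sides, viewed as functions of $\M$ on a fixed ground set $E$ of fixed rank $r$, are valuative: the left-hand side because Fink--Speyer's construction of $g_{\M}(t)$ for non-realizable matroids proceeds by valuative extension from realizable matroids, and the right-hand side because $c_1(\underline{\mathcal{L}}_{B(\M^{\perp})}) = \sum_F (\rk_{\M}(F) - |F|) z_F$ is a linear function of the rank function, which combined with the valuativity of $\underline{\chi}(\M,-)$ (analogous to Proposition~\ref{prop:eulervaluative}) yields valuativity of the right-hand side. By the matroid analogue of Corollary~\ref{cor:valequal}, it then suffices to verify the identity when $\M$ is realizable over $\mathbb{C}$.

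For a realizable matroid $\M$ with realization $L \subseteq \mathbb{C}^{E}$, one uses the Plücker map $p \colon \underline{W}_L \to Y_L \subseteq \mathrm{Gr}(r,E)$ from the (non-augmented) wonderful variety to the closure $Y_L$ of the torus orbit through $[L]$. The $g$-polynomial is defined (following Speyer~\cite{Speyerg}) so that its coefficients arise from pairings of $[\mathcal{O}_{Y_L}] \in K(\mathrm{Gr}(r,E))$ against a family of tautological $K$-classes built from the tautological sub- and quotient bundles on the Grassmannian; in particular, the top coefficient $\omega(\M)$ equals an Euler characteristic of the form $\chi(\mathrm{Gr}(r,E), [\mathcal{O}_{Y_L}] \cdot \mathcal{E})$ for a canonical class $\mathcal{E}$.

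To convert this into a computation in $\underline{K}(\M)$, one invokes that $Y_L$ has rational singularities (\cite[Theorem 4.3]{BergetFink}, as in Proposition~\ref{prop:realizablecase}), so that the projection formula gives $\chi(\mathrm{Gr}, [\mathcal{O}_{Y_L}] \cdot \mathcal{E}) = \chi(\underline{W}_L, p^{*}\mathcal{E})$, which equals $\underline{\chi}(\M, p^{*}\mathcal{E})$ by the non-augmented analogue of Proposition~\ref{prop:augKfeatures}(ii). The remaining step is to identify $p^{*}\mathcal{E}$ with $(-1)^{r-c}\,\underline{\mathcal{L}}_{B(\M^{\perp})}^{-1}$ in $\underline{K}(\M)$. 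Since $\underline{K}(\M)$ is generated by line bundles, this identification can be carried out at the level of first Chern classes by comparing $c_1(p^{*}\mathcal{E})$ with $-\sum_F (\rk_{\M}(F) - |F|) z_F$ in $\underline{A}^{\bullet}(\M)$. The sign $(-1)^{r-c}$ is best understood through the relation $(-1)^{d}\underline{\chi}(\M, \mathcal{L}^{-1}) = h^{*}_{d}(\M, \mathcal{L})$ recorded after Theorem~\ref{thm:macaulay}: for a connected matroid ($c=1$) the base polytope $B(\M^{\perp})$ is full-dimensional, so Theorem~\ref{thm:numericaldim}\ref{numericaldim:fulldim} yields Snapper degree $d = r-1$ and hence sign $(-1)^{r-1}$; the general $c$ contributes an additional $c - 1$ copies of trivial direct summands under the direct-sum decomposition of $\M$ into its connected components.

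The main obstacle is the explicit identification of the tautological Grassmannian class $\mathcal{E}$ whose pairing against $[\mathcal{O}_{Y_L}]$ computes $\omega(\M)$, together with the Chern class computation matching $p^{*}\mathcal{E}$ with the combinatorial formula for $\underline{\mathcal{L}}_{B(\M^{\perp})}^{-1}$ on the nose (including the sign). This requires unpacking Fink--Speyer's construction of $g_{\M}$ in its top coefficient and tracking how tautological bundles on $\mathrm{Gr}(r,E)$ restrict along the matroid Schubert variety $Y_L$ and then pull back to $\underline{W}_L$; it is in this step that the specific combinatorial structure of $B(\M^{\perp})$, and in particular its dimension $|E|-c$, governs the final answer.
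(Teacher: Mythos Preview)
Your strategy differs substantially from the paper's, which does not reduce to the realizable case at all.  The paper applies the Hirzebruch--Riemann--Roch-type identity $\underline{\chi}(\M,\xi) = \underline{\deg}_\M\big(\zeta_\M(\xi)\cdot(1+\underline h_E+\underline h_E^2+\dotsb)\big)$ from \cite{LLPP}, identifies $\zeta_\M([\underline{\mathcal L}_{B(\M^\perp)}^{-1}])$ with the restriction of the tautological Chern class $c(\mathcal Q_\M^\vee)$ of \cite{BEST} via an equivariant Chow computation, and then reads off $\omega(\M)$ from \cite[Theorem~10.12]{BEST}.  This works uniformly for all matroids and bypasses both valuativity and any Grassmannian geometry.

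Your outline has genuine gaps.  First, the valuativity of the right-hand side is not established by your argument: Proposition~\ref{prop:eulervaluative} and its non-augmented analogue give valuativity of $\M\mapsto\underline{\chi}(\M,\mathcal L)$ for a \emph{fixed} line bundle $\mathcal L$ on the permutohedral variety, whereas here $\underline{\mathcal L}_{B(\M^\perp)}$ depends on $\M$.  Evaluating the multivariate Snapper polynomial of $\M$ at the point $(|S|-\rk_\M(S))_S$, which itself depends on $\M$, does not inherit valuativity; a valuative family of polynomials evaluated at a valuative family of points is not valuative in general.  Second, even granting that $\underline{\chi}(\M,\underline{\mathcal L}_{B(\M^\perp)}^{-1})$ is valuative, the stated identity carries the factor $(-1)^{r-c}$, and the number $c$ of connected components is not a valuative invariant; the valuative group of rank-$r$ matroids on $E$ is not spanned by matroids with a fixed $c$, so one cannot verify the signed identity on a spanning set of realizable matroids.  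Third, as you acknowledge, the identification of the Grassmannian class $\mathcal E$ and the matching $p^*\mathcal E \leftrightarrow \underline{\mathcal L}_{B(\M^\perp)}^{-1}$ is left undone; carrying this out is essentially equivalent to the tautological-class computations in \cite{BEST} that the paper invokes directly, so the detour through realizability and rational singularities of torus-orbit closures does not save work.
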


\begin{proof}
We sketch a proof using results from \cite{LLPP} and \cite{BEST}.
By \cite[Theorem 1.8]{LLPP}, we have
$$\underline{\chi}(\M, \underline{\mathcal{L}}_{B(\M^{\perp})}^{-1}) = \underline{\deg}_{\M}( \zeta_{\M}([\underline{\mathcal{L}}_{B(\M^{\perp})}^{-1}]) \cdot (1 + \underline{h}_E + \underline{h}_E^2 + \dotsb)),$$
where $\zeta_{\M}$ is defined in \cite{LLPP}. Computing in the equivariant Chow groups of the permutohedral variety $\underline{X}_E$ using \cite[Proposition 5.6]{LLPP} and \cite[Theorem 10.1]{BEST} (see \cite[Corollary 6.5]{EHL}), we have that $\zeta_{\M}([\underline{\mathcal{L}}_{B(\M^{\perp})}^{-1}])$ is the restriction to $\underline{A}^{\bullet}(\M)$ of the class denoted $c(\mathcal{Q}_{\M}^{\vee})$ in \cite{BEST}. Then the result follows from  \cite[Theorem 10.12]{BEST}.
\end{proof}

Now, recall the formal identity satisfied by the $h^*$-vector
\begin{equation}\label{eq:topcoeff}
h^*_d(\M, \mathcal{L}) = (-1)^{d} \underline{\chi}(\M, \mathcal{L}^{-1})
\end{equation}
(see for instance \cite[Section 4.3]{Sta12}).
Moreover, when $\M$ is connected, the polytope $B(\M^{\perp})$ is full dimensional, so Theorem~\ref{thm:numericaldim} implies that the degree $d$ of the Snapper polynomial is $r - 1$.
Therefore, the two preceding propositions show that the nonnegativity of the coefficients of $g_\M(t)$ is a special case of Conjecture~\ref{conj:Mvector} with $\P = \M^\perp$.

\medskip
We now make explicit how Theorem~\ref{thm:macaulay} proves the positivity of $\omega(\M)$ in some special cases.
The first step is to express $\underline{\mathcal L}_{B(\M^\perp)}$ as a Laurent monomial in the $[\underline{\mathcal L}_S]$, or, equivalently, write $c_1(\underline{\mathcal L}_{B(\M^\perp)})$ as a linear combination of the $\underline h_S$.
To do so, for a matroid $\M$, we recall its \emph{$\beta$-invariant} \cite{Crapo}, defined by two properties:
\begin{itemize}
\item $\beta(U_{0,1}) = 0$, $\beta(U_{1,1}) = 1$, and $\beta(\M)= 0$ if $\M$ is disconnected, and 
\item the recursive relation: for any $i$ which is not a loop or coloop of $\M$
, 
$$\beta(\M) = \beta(\M/i) + \beta(\M \setminus i).$$
\end{itemize}
Equivalently, the $\beta$-invariant is the coefficient of $x$ in the Tutte polynomial of $\M$.

\begin{proposition}
Let $\M$ be a matroid on $[n]$.  Then, the polytope $B(\M^\perp)$ satisfies
\[
c_1(\underline{\mathcal{L}}_{B(\M^{\perp})}) = \sum_{\substack{F \text{ connected flat} \\ \text{of } \operatorname{rk}_{\M}(F) \ge 2}} \sum_{\operatorname{cl}_{\M}(S) = F}(-1)^{|S| - \operatorname{rk}_{\M}(S) + 1} \beta(\M|_{S}) \underline{h}_F \in \underline A^\bullet(\M).
\]
\end{proposition}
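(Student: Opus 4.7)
The plan is to match both sides coefficient-by-coefficient in the $z_F$-basis of the ambient polynomial ring $\mathbb{Z}[z_F]$, then show that the discrepancy lies in the ideal generated by the linear relations $\sum_{F \ni i}z_F$. Applying the defining formula $c_1(\underline{\mathcal L}_{B(\P)}) = \sum_F(\rk_\P(E\setminus F) - \rk_\P(E))z_F$ with $\P = \M^\perp$, together with the duality identity $\rk_{\M^\perp}(S) = |S| - \rk_\M(E) + \rk_\M(E \setminus S)$, gives the coefficient of $z_F$ on the left-hand side as $\rk_\M(F) - |F|$. On the right-hand side, substituting $\underline{h}_F = -\sum_{G \supseteq F}z_G$ and swapping summation orders, the coefficient of $z_F$ becomes $-c(F)$, where
\[
c(F) = \sum_{\substack{S\subseteq F \\ \operatorname{cl}_\M(S) \text{ connected flat of rk}\,\geq 2}}(-1)^{|S|-\rk_\M(S)+1}\beta(\M|_S).
\]

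The central combinatorial identity to establish is $c(F) = |F_s| - \rk_\M(F)$ for every nonempty flat $F$, where $|F_s|$ denotes the number of rank-1 flats of $\M$ contained in $F$. Since $\M|_S$ connected implies $\operatorname{cl}_\M(S)$ is a connected flat (any element adjoined by closure lies in a circuit meeting $S$), and $\beta$ vanishes on disconnected matroids, the closure constraint in the definition of $c(F)$ may be removed at the cost of also including the $S$ whose closure is a rank-1 flat. For each rank-1 flat $G' \subseteq F$, these omitted $S$ contribute $\sum_{\emptyset\neq S\subseteq G'}(-1)^{|S|} = -1$ (using $\beta(U_{1,k}) = 1$), so $c(F) = \widetilde c(F) + |F_s|$ with $\widetilde c(F) := \sum_{S\subseteq F}(-1)^{|S|-\rk_\M(S)+1}\beta(\M|_S)$. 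It then remains to show $\widetilde c(F) = -\rk_\M(F)$. This can be carried out by reading off the $x^1$-coefficient of $T_{\M|_S}(x,0) = \sum_{T \subseteq S}(x-1)^{\rk_\M(S)-\rk_\M(T)}(-1)^{|T|-\rk_\M(T)}$ to obtain the expansion
\[
\beta(\M|_S) = \sum_{T\subseteq S}(\rk_\M(S)-\rk_\M(T))(-1)^{\rk_\M(S)+|T|+1},
\]
substituting it into $\widetilde c(F)$, and swapping the summation order. Two applications of the basic identity $\sum_{T \subseteq V}(-1)^{|T|} = \mathbbm{1}[V = \emptyset]$ collapse the resulting double sum to the single contribution from $T = F$, yielding $\widetilde c(F) = -\rk_\M(F)$.

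Granting $c(F) = |F_s| - \rk_\M(F)$, the discrepancy between the two coefficient expansions is $\sum_F (|F_s| - |F|)z_F = -\sum_{G' \text{ rank-1 flat of }\M}(|G'|-1)\sum_{F \supseteq G'}z_F$. This lies in the ideal of linear relations: for any $i \in G'$, the flats containing $i$ are precisely the flats containing $G' = \operatorname{cl}_\M(\{i\})$, so $\sum_{F \ni i}z_F = \sum_{F \supseteq G'}z_F$, and summing this relation over any choice of $|G'|-1$ elements $i \in G'$ produces exactly $(|G'|-1)\sum_{F \supseteq G'}z_F$. The main obstacle in this plan is the Tutte-polynomial identity $\widetilde c(F) = -\rk_\M(F)$, but it follows from a short double-sum calculation.
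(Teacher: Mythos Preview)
Your proof is correct but takes a genuinely different route from the paper's. The paper invokes the Ardila--Benedetti--Doker signed Minkowski sum decomposition \cite[Theorem 2.6]{ABD10}, which expresses $B(\M^\perp)$ as $\sum_{|S|\ge 2}(-1)^{|S|-\rk_\M(S)+1}\beta(\M|_S)\underline\Delta_S$ plus loop corrections; since Minkowski sum of polytopes corresponds to addition of the associated divisor classes and $c_1(\underline{\mathcal L}_{\underline\Delta_S}) = \underline h_S$, the formula is then immediate from $\underline h_S = \underline h_{\operatorname{cl}_\M(S)}$ and $\underline h_i = 0$ in $\underline A^\bullet(\M)$. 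Your argument instead bypasses the ABD decomposition entirely: you compute both sides in the $z_F$-basis via the corank--nullity expansion of $\beta$ and a short inclusion--exclusion, proving the key identity $\sum_{S\subseteq F}(-1)^{|S|-\rk_\M(S)+1}\beta(\M|_S) = -\rk_\M(F)$ from scratch. The paper's approach is shorter and more conceptual (the combinatorics is packaged into the cited polytope decomposition), while yours is self-contained and more elementary, requiring no external input beyond the definition of the Tutte polynomial; in effect, your double-sum calculation reproves the relevant piece of the ABD result directly inside the Chow ring.
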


\begin{proof}
Let $\underline{\Delta}_S$ be the simplex $\operatorname{Conv}(\{\textbf{e}_{i} : i \in S\})$.
Then \cite[Theorem 2.6]{ABD10} expressed $B(\M^\perp)$ as a signed Minkowski sum of these simplices as follows:
$$B(\M^{\perp}) = \sum_{S \subseteq [n], |S| \ge 2} (-1)^{|S| - \operatorname{rk}_{\M}(S) + 1} \beta(\M|_{S}) \underline{\Delta}_S + \sum_{i \text{ loop of }\M} \underline{\Delta}_i.$$
This gives an expression for $c_1(\underline{\mathcal{L}}_{B(\M^{\perp})})$ on the permutohedral toric variety as a sum of the simplicial generators $\underline h_S$. As $\underline h_S = \underline{h}_{\operatorname{cl}_{\M}(S)}$ in $\underline{A}^{\bullet}(\M)$ and $\underline{h}_i = 0$, we obtain the desired expression.
\end{proof}

\begin{theorem}\label{thm:simplicialomega}
Let $\M$ be a loopless and coloopless matroid of rank $r$ such that, for all connected flats $F$ of $\M$ of rank at least $2$, we have $\sum_{\operatorname{cl}_{\M}(S) = F}(-1)^{|S| - \operatorname{rk}_{\M}(S) + 1} \beta(\M|_{S}) \ge 0$. Then $\omega(\M) \ge 0$. 
\end{theorem}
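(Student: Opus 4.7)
The plan is to recognize the combinatorial hypothesis on $\M$ as equivalent to the simplicial positivity of the line bundle $\underline{\mathcal L}_{B(\M^\perp)}$ in $\underline K(\M)$, and then to deduce the conclusion from Theorem~\ref{thm:macaulay} via the identity \eqref{eq:topcoeff} and Proposition~\ref{prop:KOmega}. I first reduce to the case where $\M$ is connected: since Speyer's $g$-polynomial is multiplicative under direct sum, $\omega$ is itself multiplicative across connected components, so the disconnected case follows by induction from the connected one.

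Assuming $\M$ is connected, the proposition immediately preceding the statement of the theorem expresses
\[
c_1(\underline{\mathcal L}_{B(\M^\perp)}) = \sum_F k_F\, \underline{h}_F,
\]
where $F$ ranges over connected flats of rank at least $2$ and $k_F = \sum_{\operatorname{cl}_\M(S) = F}(-1)^{|S|-\operatorname{rk}_\M(S)+1}\beta(\M|_S)$. The hypothesis of the theorem says precisely that each $k_F \geq 0$. Because $X_{\underline\Sigma_\M}$ is a smooth projective toric variety, the first Chern class map $\operatorname{Pic}(X_{\underline\Sigma_\M}) \to \underline A^1(\M)$ is injective, so the identity $\underline{\mathcal L}_{B(\M^\perp)} = \bigotimes_F \underline{\mathcal L}_F^{\otimes k_F}$ holds in $\underline K(\M)$; in particular, $\underline{\mathcal L}_{B(\M^\perp)}$ is simplicially positive in the sense of Theorem~\ref{thm:macaulay}.

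Since $\M$ is connected, $B(\M^\perp)$ is full-dimensional, so Theorem~\ref{thm:numericaldim}(2) ensures that the Snapper polynomial $\underline\chi(\M, \underline{\mathcal L}_{B(\M^\perp)}^{\otimes t})$ has degree $d = r - 1$. Theorem~\ref{thm:macaulay} then gives that $h^*(\M, \underline{\mathcal L}_{B(\M^\perp)})$ is a Macaulay vector, so in particular $h^*_{r-1}(\M, \underline{\mathcal L}_{B(\M^\perp)}) \geq 0$. Combining \eqref{eq:topcoeff} with Proposition~\ref{prop:KOmega} (specialized to $c = 1$) yields
\[
\omega(\M) = (-1)^{r-1}\,\underline\chi(\M, \underline{\mathcal L}_{B(\M^\perp)}^{-1}) = h^*_{r-1}(\M, \underline{\mathcal L}_{B(\M^\perp)}) \geq 0.
\]

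The conceptual crux, more than a technical obstacle, is the recognition that the intricate alternating sum of $\beta$-invariants appearing in the hypothesis is exactly the coefficient of $\underline h_F$ in the simplicial expansion of $c_1(\underline{\mathcal L}_{B(\M^\perp)})$, so that the hypothesis is a clean positivity statement about one specific line bundle. Once this is in place, the theorem becomes a short concatenation of the tools already developed: the formula for $B(\M^\perp)$ as a signed Minkowski sum of simplices, the dimension computation of Theorem~\ref{thm:numericaldim}, and the Macaulay property of Theorem~\ref{thm:macaulay}.
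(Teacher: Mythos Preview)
Your argument matches the paper's almost exactly. Two small slips worth noting: $X_{\underline\Sigma_\M}$ is not projective (the Bergman fan is not complete)---smoothness alone gives $\operatorname{Pic}\simeq\underline A^1(\M)$, which is all you need---and in the reduction to the connected case you should observe (as the paper does) that the hypothesis on $\M$ restricts to each connected component $\M_i$, since every connected flat of $\M_i$ of rank $\ge 2$ is a connected flat of $\M$ and the sum over $S$ with $\operatorname{cl}_\M(S)=F$ is unchanged.
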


\begin{proof}
First suppose that $\M$ is connected, so the polytope $B(\M^{\perp})$ is full dimensional.
By Theorem~\ref{thm:numericaldim}\ref{numericaldim:fulldim}, the degree of the Snapper polynomial of $\underline{\mathcal{L}}_{B(\M^{\perp})}$ is $r-1$ in this case.
By Theorem~\ref{thm:macaulay} along with \eqref{eq:topcoeff}, we thus have $\omega(\M) = (-1)^{r-1}\underline{\chi}(\M, \underline{\mathcal{L}}_{B(\M^{\perp})}^{-1}) = h^*_{r-1}(\M, \underline{\mathcal{L}}_{B(\M^{\perp})}) \ge 0$. 

Now suppose that $\M = \M_1 \oplus \dotsb \oplus \M_c$, with each $\M_i$ connected. The hypothesis implies that each $\underline{\mathcal{L}}_{B(\M_i^{\perp})}$ is simplicially positive for each $i$, and so $\omega(\M_i) \ge 0$. By \cite[Proposition 7.2]{FinkSpeyer}, 
$g_{\M}(t) = g_{\M_1}(t) \dotsb g_{\M_c}(t).$
Because the $t^i$ coefficient of $g_{\M}(t)$ vanishes for $i > \operatorname{rk}(\M)$, 
\begin{equation*}
\omega(\M) = \omega(\M_1)  \dotsb  \omega(\M_c) \ge 0. \qedhere
\end{equation*}
\end{proof}

In particular, Theorem~\ref{thm:simplicialomega} states that if $\underline{\mathcal{L}}_{B(\M^\perp)}$ is simplicially positive, then $\omega(\mathrm{M}) \ge 0$. While it appears that this is not often satisfied, Theorem~\ref{thm:simplicialomega} does show that $\omega(\M) \ge 0$ for many matroids. We give two examples. 

\begin{example}
For a nonempty subset $S\subseteq E$, let $\mathrm{H}_S$ be the corank 1 matroid on $E$ with $S$ as its unique circuit.  A \emph{co-transversal matroid} is a matroid $\M$ that arises as the matroid intersection $\M = \mathrm{H}_{S_1} \wedge \dotsb \wedge \mathrm{H}_{S_c}$ for some (not necessarily distinct) subsets $S_1, \dotsc, S_c$.  In this case, one verifies that $c_1(\underline{\mathcal{L}}_{B(\M^\perp)}) = \sum_{i = 1}^c \underline h_{S_i} \in \underline A^{\bullet}(\M)$.  In particular, $\underline{\mathcal{L}}_{B(\mathrm{M}^{\perp})}$ is simplicially positive, so Theorem~\ref{thm:simplicialomega} applies to all co-transversal matroids. 
\end{example}

Co-transversal matroids are realizable over an infinite field of arbitrary characteristic, so we could have used \cite[Proposition 3.3]{Speyerg} or Example~\ref{rem:BergetFink} below.  We now construct an infinite family of matroids to which Theorem~\ref{thm:simplicialomega} applies but which are not realizable over a field of characteristic $0$, as follows.
We will use the notion of \emph{principal extensions}, whose definition and properties can be found in \cite[\S7.2]{Oxl11}.

\begin{lemma}
Let $\M$ be a loopless matroid on $E$, and fix a nonempty flat $G$.
Denote by $\M' = \M +_G \star$ the principal extension of $\M$ by $G$.
Then, writing
\[
c_1(\underline{\mathcal{L}}_{B(\M^\perp)}) = \sum_{F \text{ a flat of $\M$}} c_F \underline h_F \in \underline A^{\bullet}(\M),
\]
we have that the expression for $c_1(\underline{\mathcal{L}}_{B((\M')^\perp)}) \in \underline A^{\bullet}(\M')$ is roughly ``obtained by increasing the coefficient of $c_G$ by 1,'' or precisely,
\[
c_1(\underline{\mathcal{L}}_{B((\M')^\perp)}) = \underline h_{G\cup \star} + \sum_{F \supseteq G} c_F \underline h_{F\cup \star} + \sum_{F\not\supseteq G} c_F \underline h_F. 
\]
\end{lemma}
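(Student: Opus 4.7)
The plan is to verify the identity directly inside $\underline A^1(\M')$ by expanding both sides in the $z$-basis and comparing coefficients modulo the linear relations $\sum_{F \ni i} z_F = 0$.  The starting point is the direct formula, obtained by substituting the duality identity $\rk_{\M^\perp}(E \setminus F) = |E \setminus F| - \rk_\M(E) + \rk_\M(F)$ into the definition of $\underline{\mathcal L}_{B(\M^\perp)}$:
\[
c_1(\underline{\mathcal L}_{B(\M^\perp)}) = \sum_{\emptyset \neq F \text{ flat of } \M} (\rk_\M(F) - |F|) z_F \in \underline A^1(\M),
\]
together with the analogous formula for $\M'$.

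The first step is to classify flats $F''$ of $\M' = \M +_G \star$ into three types, each arising from a flat $F$ of $\M$: (i) $F'' = F$ where $F$ is a nonempty flat of $\M$ with $F \not\supseteq G$; (ii) $F'' = F \cup \star$ where $F$ is a flat of $\M$ with $F \supseteq G$; and (iii) $F'' = F \cup \star$ where $F$ is a flat of $\M$ with $F \not\supseteq G$ satisfying $\rk_{\M/F}(G \setminus F) \ge 2$.  A direct rank computation from the defining property of principal extension shows that $\rk_{\M'}(F'') - |F''|$ equals $\rk_\M(F) - |F|$ for types (i) and (iii), while it equals $\rk_\M(F) - |F| - 1$ for type (ii).  This yields the $z$-basis expansion of $c_1(\underline{\mathcal L}_{B((\M')^\perp)})$.

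Next, I would expand the right-hand side of the lemma in the $z$-basis of $\underline A^1(\M')$ via the identity $\underline h_S = -\sum_{F'' \supseteq S} z_{F''}$, splitting each resulting sum by the three flat-types above.  A careful collation shows that for each $F''$, the coefficient of $z_{F''}$ in the right-hand side equals $-\sum_{\emptyset \neq F' \subseteq F} c_{F'}$, plus an additional summand $-1$ for type (ii) flats arising from the $\underline h_{G \cup \star}$ term (where $F$ denotes the underlying flat of $\M$).  Subtracting, the coefficient of $z_{F''}$ in the difference (left-hand side minus right-hand side) collapses uniformly across all three types to the single expression $\rk_\M(F) - |F| + \sum_{F' \subseteq F} c_{F'}$.

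Finally, the given hypothesis $\sum_F c_F \underline h_F = c_1(\underline{\mathcal L}_{B(\M^\perp)})$ in $\underline A^1(\M)$ is equivalent, after expanding in the $z$-basis of $\M$ and clearing the ideal generated by the $\sum_{F \ni i} z_F$, to the existence of integers $\lambda_i$ ($i \in E$) such that $\rk_\M(F) - |F| + \sum_{F' \subseteq F} c_{F'} = \sum_{i \in F} \lambda_i$ for every nonempty flat $F$ of $\M$.  Setting $\lambda'_i = \lambda_i$ for $i \in E$ and $\lambda'_\star = 0$, the uniform coefficient formula above evaluates to $\sum_{i \in F''} \lambda'_i$, exhibiting the difference as the integer combination $\sum_{i \in E \cup \{\star\}} \lambda'_i \sum_{F'' \ni i} z_{F''}$ of the linear relations, which vanishes in $\underline A^1(\M')$.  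The main care is needed in the coefficient expansion step: the type (iii) flats of $\M'$ appear in the $z$-expansion of $\underline h_F$ for flats $F$ of $\M$ with $F \not\supseteq G$ but are absent from the statement of the lemma itself, so one must verify that their coefficient still matches the uniform expression.
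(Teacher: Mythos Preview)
Your proof is correct and follows essentially the same route as the paper's: both expand in the $z$-basis of $\underline A^1(\M')$, invoke the standard three-way classification of flats of a principal extension, and compare the coefficient of each $z_{F''}$. The only real difference is that the paper fixes one specific choice of the $c_F$ (namely the one with $\sum_{F' \subseteq F} c_{F'} = \rk_\M(F)$, obtained by M\"obius inversion from the formula $c_1(\underline{\mathcal L}_{B(\M^\perp)}) = \sum_F \rk_\M(F)\, z_F$) and checks the identity directly for that choice, whereas you allow an arbitrary decomposition and absorb the resulting discrepancy into the linear relations via the coefficients $\lambda_i$; this buys you the lemma in the full generality of its stated form at the cost of one extra bookkeeping step.
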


\begin{proof}
We use the fact that $c_1(\underline{\mathcal{L}}_{B(\M^\perp)}) = \sum_F \operatorname{rk}_\M(F) z_F$ (see \cite[Section 2.7 and Remark III.1]{BEST}), so that the coefficients $c_F$ are defined by the property that $\sum_{F'\subseteq F} c_{F'} = \operatorname{rk}_\M(F)$ for all flats $F$ of $\M$.

Now, we recall that the set of flats of $\M'$ is partitioned into three categories \cite[Corollary 7.2.5]{Oxl11}:
\begin{enumerate}[label = (\roman*)]
\item $\{F : F \text{ a flat of $\M$ such that $F\not\supseteq G$}\}$, in which case $\rk_{\M'}(F) = \rk_\M(F)$,
\item $\{F \cup \star : F \text{ a flat of $\M$ such that $F\supseteq G$}\}$, in which case $\rk_{\M'}(F\cup \star) = \rk_\M(F)$, and
\item $\{ F \cup \star : F \text{ a flat of $\M$ such that $F\not\supseteq G$ and $F$ is not covered by an element in $[G,E]$}\}$, in which case $\rk_{\M'}(F\cup \star) = \rk_\M(F) + 1$.
\end{enumerate}
Thus, in $\underline A^{\bullet}(\M')$, since $\underline h_\star = 0$ so that $- z_{E\cup \star} = \sum_{\emptyset \subseteq F \subsetneq E} z_{F\cup \star}$, we have
\[
\underline h_{G\cup \star} = \sum_{\emptyset \subseteq F \subsetneq E} z_{F\cup \star} + \sum_{G \subseteq F \subsetneq E} - z_{F \cup \star} = \sum_{F\not\supseteq G} z_{F\cup \star}.
\]
The claimed expression for $c_1(\underline{\mathcal{L}}_{B((\M')^\perp)})$ in all three cases of flats now follows, as the above expression for $\underline h_{G\cup \star}$ contributes only to the case (iii) and not to cases (i) or (ii).  Explicitly, we have:
\begin{enumerate}[label = (\roman*)]
\item In this case, the coefficient of $z_F$ is $\sum_{F' \subseteq F} c_{F'} = \rk_\M(F) = \rk_{\M'}(F)$.
\item In this case, the coefficient of $z_{F\cup \star}$ is again $\sum_{F' \subseteq F} c_{F'} = \rk_\M(F) = \rk_{\M'}(F\cup \star)$.
\item In this case, the coefficient of $z_{F\cup \star}$ is $ 1+ \sum_{F'\subseteq F} c_{F'} = 1+\rk_\M(F) = \rk_{\M'}(F\cup \star)$.\qedhere
\end{enumerate}
\end{proof}

Given any matroid $\M$, repeatedly applying the lemma provides a method to construct a matroid $\widetilde\M$ for which Theorem~\ref{thm:simplicialomega} applies.
Moreover, a matroid is realizable over an infinite field $\kk$ if and only if its principal extensions are realizable over the same field $\kk$.  Thus, the matroid $\widetilde\M$ has the same realizability property as $\M$ over infinite fields.
In particular, the lemma produces infinite families of matroids that are not realizable or realizable only over certain positive characteristics for which $\underline{\mathcal{L}}_{B(\mathrm{M}^{\perp})}$ is simplicially positive, so Theorem~\ref{thm:simplicialomega} applies.

\subsection{Examples and problems}\label{ssec:future}

We present several cases in which Conjecture~\ref{conj:Mvector} holds.

\begin{example}
When $\M$ is the boolean matroid, the discussion in Remark~\ref{rem:chernclass} implies that $h^*(\M, \underline{\mathcal{L}}_{B(\P)})$ is the usual $h^*$-vector of the base polytope $B(\P)$, and hence is nonnegative.
Moreover, because base polytopes of polymatroids have the property that every lattice point in $kB(\P)$ is a sum of $k$ lattice points in $B(\P)$ (see \cite[Chapter 18.6, Theorem 3]{Wel76}), $h^*(\M, \underline{\mathcal{L}}_{B(\P)})$ is a Macaulay vector. 
\end{example}

\begin{example}
Let $\nabla = \operatorname{Conv}(\{(0, 1, \dotsc, 1), (1, 0, 1, \dotsc, 1), \dotsc, (1, 1, \dotsc, 1, 0)\})$, the base polytope of the uniform matroid of corank $1$, so $c_1(\underline{\mathcal{L}}_{\nabla}) \in \underline{A}^1(\M)$ is the class usually denoted $\beta$.  Then \cite[Lemma 8.5]{LLPP} implies that
$$\underline{\chi}(\M, \underline{\mathcal{L}}_{\nabla}^{\otimes t}) = \sum_{i}f_{r-1-i}(BC_>(\M)) \binom{t}{r-1-i},$$
where $f_j(BC_>(\M))$ is the number of $j$-dimensional faces of the reduced broken circuit complex of $\M$ under any ordering $>$. As $\binom{t}{r-1-i} = \sum_{j = 0}^i (-1)^j \binom{i}{j}\binom{t+i}{r-1}$, we may express the Snapper polynomial in terms of the $h$-vector of the reduced broken circuit complex:
$$\underline{\chi}(\M, \underline{\mathcal{L}}_{\nabla}^{\otimes t}) =\sum_i h_{r-1-i}(BC_>(\M))  \binom{t+i}{r-1}.$$
Comparing this with the definition of $h^*(\M, \underline{\mathcal{L}}_{\nabla})$, we see that $h_i(BC_>(\M)) = h_i^*(\M, \underline{\mathcal{L}}_{\nabla})$. By \cite{BjornerShellable}, the reduced broken circuit complex is shellable and hence Cohen--Macaulay, so its $h$-vector is a Macaulay vector. This argument is closely related to \cite{PS06}.
\end{example}

\begin{example}\label{rem:BergetFink}
Let $\M$ be a connected matroid that has a realization $L \subseteq \kk^E$ over a field of characteristic $0$. Then the base polytope of the dual matroid $B(\M^{\perp})$ is full dimensional. It follows from \cite[Theorem 5.1]{BergetFink} and \cite[Theorem 7.10]{BEST} that, for all $k > 0$, the restriction map $H^0(\underline{X}_E, \underline{\mathcal{L}}_{B(\M^{\perp})}^{\otimes k}) \to H^0(W_L, \underline{\mathcal{L}}_{B(\M^{\perp})}^{\otimes k})$ is surjective and that $H^i(W_L, \underline{\mathcal{L}}_{B(\M^{\perp})}^{\otimes k}) = 0$ for $i > 0$. Therefore, by \cite[Chapter 18.6, Theorem 3]{Wel76}, the ring 
$$R^{\bullet} \coloneqq \bigoplus_{k \ge 0} H^0(W_L, \underline{\mathcal{L}}_{B(\M^{\perp})}^{\otimes k})$$
is generated in degree $1$. This implies that $\operatorname{Proj} R^{\bullet}$ is the image of $W_L$ under the complete linear system of $\underline{\mathcal{L}}_{B(\M^{\perp})}$. This is called the \emph{space of visible contours} of $L$.  It is known that $\operatorname{Proj} R^{\bullet}$ has rational singularities \cite[Theorem 1.4 and 1.5]{TevelevCompactifications}. In particular,
$$H^i(W_L, \underline{\mathcal{L}}_{B(\M^{\perp})}^{\otimes k}) = H^i(\operatorname{Proj} R^{\bullet}, \mathcal{O}(k))$$
for all $i$ and $k$. Because $B(\M^{\perp})$ is full dimensional, the line bundle $\underline{\mathcal{L}}_{B(\M^{\perp})}$ is nef and big.  By the Kawamata--Viehweg vanishing theorem, $H^i(W_L, \underline{\mathcal{L}}_{B(\M^{\perp})}^{\otimes k}) = H^i(\operatorname{Proj} R^{\bullet}, \mathcal{O}(k)) = 0$ for $k < 0$ and $i < \dim W_L$. As $W_L$ is rational, $H^i(W_L, \mathcal{O}_{W_L}) = 0$ for $i > 0$. Then Proposition~\ref{prop:ampleMacaulay} implies that $R^{\bullet}$ is Cohen--Macaulay, and so $h^*(\M, \underline{\mathcal{L}}_{B(\M^{\perp})})$ is a Macaulay vector. 
\end{example}

Lastly, we discuss the valuativity of $h^*$-vectors of matroids and conjecture a monotonicity property for them. 
When $\P$ is full dimensional, Theorem~\ref{thm:numericaldim} implies that the degree of the Snapper polynomial of $\underline{\mathcal{L}}_{B(\P)}$ depends only on the rank of $\M$. From the formula for $h^*(\M, \underline{\mathcal{L}}_{B(\P)})$ and the valuativity of $\underline{\chi}(\M, \underline{\mathcal{L}}_{B(\P)}^{\otimes k})$ for fixed $\P$ and $k$ \cite[Lemma 6.4]{LLPP}, we obtain the following corollary. 

\begin{corollary}
Let $\P$ be a polymatroid such that $B(\P)$ is full dimensional. Then the function that assigns to a loopless matroid $\M$ the polynomial $h^*(\M, \underline{\mathcal{L}}_{B(\P)})$ is valuative. 
\end{corollary}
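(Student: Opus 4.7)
The plan is to deduce the valuativity of the polynomial $h^*(\M, \underline{\mathcal L}_{B(\P)})$ from the already-known valuativity of its Snapper values, by exhibiting each $h^*$-coefficient as an $\M$-independent $\ZZ$-linear combination of those values. The only non-formal input needed is that the degree $d$ of the Snapper polynomial $\underline\chi(\M, \underline{\mathcal L}_{B(\P)}^{\otimes t})$ be constant as $\M$ varies within the valuative group, and the full-dimensionality hypothesis on $B(\P)$ is precisely what guarantees this.

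First I would reduce to matroids of a fixed rank $r$ on a fixed ground set $E$: since the base polytope of any matroid on $E$ of rank $r$ lies in the affine hyperplane $\{\sum_i x_i = r\}$, the indicator functions of matroids of distinct ranks are linearly independent, so a valuative relation can only involve matroids of the same rank and ground set.  Under this restriction, Theorem~\ref{thm:numericaldim}\ref{numericaldim:fulldim} applies: full dimensionality of $B(\P)$ forces the numerical dimension of $c_1(\underline{\mathcal L}_{B(\P)})$ to equal $r-1$, and then part~\ref{numericaldim:bound} of the same theorem forces the Snapper polynomial to have degree exactly $r-1$.  Thus $d = r - 1$ is constant as $\M$ varies over the valuative group being considered.

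Next I would invert the defining identity
\[
\sum_{k \geq 0} \underline\chi(\M, \underline{\mathcal L}_{B(\P)}^{\otimes k}) q^k = \frac{h^*(\M, \underline{\mathcal L}_{B(\P)}; q)}{(1-q)^{r}}
\]
by writing $h^*(\M, \underline{\mathcal L}_{B(\P)}; q) = (1-q)^{r} \sum_{k \geq 0} \underline\chi(\M, \underline{\mathcal L}_{B(\P)}^{\otimes k}) q^k$ and extracting the coefficient of $q^i$ for $0 \leq i \leq r-1$.  This expresses each $h^*_i(\M, \underline{\mathcal L}_{B(\P)})$ as a fixed $\ZZ$-linear combination of the Snapper values $\underline\chi(\M, \underline{\mathcal L}_{B(\P)}^{\otimes k})$ for $0 \leq k \leq i$, with coefficients depending only on $r$ and $i$ and not on $\M$.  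Since \cite[Lemma 6.4]{LLPP} guarantees that each function $\M \mapsto \underline\chi(\M, \underline{\mathcal L}_{B(\P)}^{\otimes k})$ is valuative for fixed $\P$ and $k$, and since $\ZZ$-linear combinations of valuative functions are valuative, each $h^*_i$ is valuative, and hence so is the polynomial-valued function $\M \mapsto h^*(\M, \underline{\mathcal L}_{B(\P)})$.

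The main obstacle is the constancy of $d$: without it, one cannot assemble a single $\M$-independent linear combination of Snapper values recovering the $h^*$-coefficients, and indeed the polynomial $h^*(\M, \underline{\mathcal L}_{B(\P)})$ would not even have a uniform number of coefficients to track valuatively.  Theorem~\ref{thm:numericaldim} is therefore the essential technical input; once it is in hand, the rest of the argument is a routine inversion of a rational-function identity combined with the cited valuativity of Snapper values.
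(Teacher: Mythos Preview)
Your proposal is correct and follows essentially the same approach as the paper: use Theorem~\ref{thm:numericaldim} together with the full-dimensionality hypothesis to pin down the degree of the Snapper polynomial as $r-1$, then observe that the defining identity expresses each $h^*_i$ as a fixed $\ZZ$-linear combination of the valuative Snapper values $\underline\chi(\M,\underline{\mathcal L}_{B(\P)}^{\otimes k})$ from \cite[Lemma~6.4]{LLPP}. Your explicit reduction to a fixed rank (by noting that base polytopes of distinct ranks lie in disjoint hyperplanes, so any valuative relation decomposes rank-by-rank) is a useful clarification that the paper leaves implicit.
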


However, the degree of the Snapper polynomial of $\underline{\mathcal{L}}_{B(\P)}$ on $\M$ is not in general determined by the rank of $\M$. The numerical dimension of $\underline{\mathcal{L}}_{B(\P)}$ is also not determined by the rank of $\M$. 

\begin{question}
Let $\P$ be a polymatroid. What is the degree of the Snapper polynomial of $\underline{\mathcal{L}}_{B(\P)}$ on $\M$? Is it equal to the numerical dimension of $\underline{\mathcal{L}}_{B(\P)}$? Is $h^*(\M, \underline{\mathcal{L}}_{B(\P)})$ valuative?
\end{question}

We also conjecture the following monotonicity property for $h^*$-vectors, inspired by Stanley's monotonicity result for $h^*$-vectors of polytopes \cite{StanleyMonotonicity}, which implies the following conjecture when $\M$ is the boolean matroid. 

\begin{conjecture}\label{conj:monotonicity}
Let $\P_1, \P_2$ be polymatroids with $B(\P_1) \subseteq B(\P_2)$. Then for any loopless matroid $\M$, $h^*_i(\M, \underline{\mathcal{L}}_{B(\P_1)}) \le h^*_i(\M, \underline{\mathcal{L}}_{B(\P_2)})$ for all $i$.  
\end{conjecture}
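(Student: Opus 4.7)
The plan is to adapt Stanley's monotonicity theorem for $h^*$-vectors of lattice polytopes~\cite{StanleyMonotonicity} to the matroidal setting. As an initial sanity check, the boolean case $\M = \mathrm{U}_{|E|,E}$ follows immediately: by Remark~\ref{rem:chernclass}, $\underline{\mathcal L}_{B(\P)}$ on the permutohedral toric variety $\underline X_E$ is the toric line bundle of $B(\P)$, so $\underline\chi(\M, \underline{\mathcal L}_{B(\P)}^{\otimes k}) = |kB(\P) \cap \ZZ^E|$ is an Ehrhart function and the conjecture reduces to Stanley's theorem.

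For $\M$ with a realization $L \subseteq \kk^E$ in characteristic zero and for $\P_1, \P_2$ with both $B(\P_i)$ full dimensional, I would consider the section rings
\[
R_i := \bigoplus_{k \geq 0} H^0(\underline W_L, \underline{\mathcal L}_{B(\P_i)}^{\otimes k}).
\]
Generalizing Example~\ref{rem:BergetFink} via~\cite[Theorem~5.1]{BergetFink}, the Kawamata--Viehweg vanishing theorem, and the fact that the image of $\underline W_L$ under $\underline{\mathcal L}_{B(\P_i)}$ has rational singularities, both $R_i$ should be Cohen--Macaulay with Hilbert series numerator equal to $h^*(\M, \underline{\mathcal L}_{B(\P_i)}; q)$ after quotienting by a common linear system of parameters. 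The crux would then be to construct a graded ring homomorphism $R_1 \to R_2$ descending to a degreewise injection on the artinian quotients, induced by multiplication by a section of the line bundle associated to the Minkowski difference $B(\P_2) - B(\P_1)$; when this difference is not itself a polytope, one factors the inclusion $B(\P_1) \subseteq B(\P_2)$ through a chain of elementary Minkowski-sum enlargements for which the relevant difference is a segment.

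To transfer the inequality from realizable to arbitrary $\M$, I would invoke the valuativity of $\underline\chi(\M, \underline{\mathcal L}_{B(\P)}^{\otimes k})$ in $\M$ from~\cite[Lemma~6.4]{LLPP}, noting that Theorem~\ref{thm:numericaldim} guarantees the Snapper polynomial has constant degree $r-1$ (and hence each $h^*_i$ is valuative in $\M$) when $B(\P)$ is full dimensional. Writing $\mathbf{1}_{\M}$ as a $\ZZ$-linear combination of $\mathbf{1}_{\M'}$ for realizable $\M'$ via~\cite[Remark~3.16]{ELPoly} would then translate the realizable-case inequalities into an identity for arbitrary $\M$.

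The main obstacle is precisely this last step: valuativity preserves equalities but not inequalities, so this route alone is insufficient. A more promising strategy is to realize the difference $h^*_i(\M, \underline{\mathcal L}_{B(\P_2)}) - h^*_i(\M, \underline{\mathcal L}_{B(\P_1)})$ as the $i$-th Hilbert dimension of an explicit graded quotient $R_2/R_1$ naturally associated to the triple $(\M, \P_1, \P_2)$; if this quotient can be defined uniformly in $\M$ with manifestly nonnegative Hilbert function on realizable $\M$, the inequality becomes automatic under valuativity. Secondary obstacles include extending Cohen--Macaulayness and cohomology vanishing to polymatroids with non-full-dimensional base polytopes, where the Snapper degree can drop below $r-1$, and verifying injectivity of the map between the artinian quotients of $R_1$ and $R_2$, which is the most delicate step in the Cohen--Macaulay approach.
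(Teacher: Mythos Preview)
The statement is a \emph{conjecture} in the paper, not a theorem; the paper gives no proof. What the paper does provide is (i) the observation, which you also make, that the boolean case is Stanley's monotonicity theorem, and (ii) a proposition establishing the inequality only for the \emph{sum} $\sum_i h^*_i(\M,\underline{\mathcal L}_{B(\P_j)}) = \underline{\deg}_\M\bigl(c_1(\underline{\mathcal L}_{B(\P_j)})^{r-1}\bigr)$ (when the Snapper degree is $r-1$), using that $c_1(\underline{\mathcal L}_{B(\P_2)})-c_1(\underline{\mathcal L}_{B(\P_1)})$ restricts from an effective class on $\underline X_E$ together with Proposition~\ref{prop:combample}. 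So there is nothing to compare your argument against: you are attempting to prove an open problem.

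Your outline is a reasonable research plan, and you correctly flag its two principal gaps. The valuativity obstruction is genuine and, as you say, fatal to the naive reduction to the realizable case. But there are also real problems upstream of that, in the realizable case itself. First, $R_1$ and $R_2$ are section rings of \emph{different} line bundles on $\underline W_L$; there is no natural graded ring map $R_1\to R_2$, and ``$R_2/R_1$'' has no meaning as written. Your proposed fix via multiplication by a section of $\underline{\mathcal L}_{B(\P_2)}\otimes\underline{\mathcal L}_{B(\P_1)}^{-1}$ gives, for each $k$, a map $H^0(\underline{\mathcal L}_{B(\P_1)}^{\otimes k})\to H^0(\underline{\mathcal L}_{B(\P_2)}^{\otimes k})$ only after choosing a section of the $k$-th power, and these do not assemble into a ring homomorphism unless the difference is a genuine Minkowski summand. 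Second, the claimed factorization of $B(\P_1)\subseteq B(\P_2)$ into a chain of Minkowski-sum enlargements by segments need not exist (a triangle inside a square is already a counterexample among generalized permutohedra). Finally, even granting all of this structure, the step you call ``delicate''---injectivity on artinian quotients---is the entire content of Stanley's argument in the polytope case, and that argument uses a half-open decomposition of $C(Q)\setminus C(P)$ with no evident analogue on $\underline W_L$. None of these gaps is addressed by anything in the paper; the conjecture remains open.
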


If the degree of the Snapper polynomial of $\mathcal{L}$ is $\operatorname{rk}(\M) -1$,  then $\sum h_i^*(\M, \mathcal{L}) = \underline{\deg}_{\M}(c_1(\mathcal{L})^{r-1})$, so the following result gives evidence for Conjecture~\ref{conj:monotonicity}. 

\begin{proposition}
Let $\P_1, \P_2$ be polymatroids with $B(\P_1) \subseteq B(\P_2)$. Then 
$$\underline{\deg}_{\M}(c_1(\underline{\mathcal{L}}_{B(\P_1)})^{r-1}) \le \underline{\deg}_{\M}(c_1(\underline{\mathcal{L}}_{B(\P_2)})^{r-1}).$$
\end{proposition}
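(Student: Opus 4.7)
The plan is to realize $D := c_1(\underline{\mathcal{L}}_{B(\P_2)}) - c_1(\underline{\mathcal{L}}_{B(\P_1)}) \in \underline{A}^1(\M)$ as the restriction of an effective divisor class from the permutohedral toric variety $\underline{X}_E$, then deduce the monotonicity by a telescoping identity combined with Proposition~\ref{prop:combample}(2). First I would observe that $B(\P_1) \subseteq B(\P_2)$ forces $\rk(\P_1) = \rk(\P_2)$, since the base polytopes lie in parallel affine hyperplanes $\{\sum_i x_i = \rk(\P_i)\}$; set $r = \rk(\M) = \rk(\P_1) = \rk(\P_2)$.

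Next, for each ray direction $-\mathbf{1}_F$ of the permutohedral fan (with $\emptyset \subsetneq F \subsetneq E$), the polymatroid greedy algorithm gives $\max_{x \in B(\P)} \sum_{i \in E \setminus F} x_i = \rk_\P(E \setminus F)$, so the support function satisfies $h_{B(\P)}(-\mathbf{1}_F) = \rk_\P(E \setminus F) - r$. Since $h_{B(\P_1)} \le h_{B(\P_2)}$ on every ray, we obtain $\rk_{\P_1}(E \setminus F) \le \rk_{\P_2}(E \setminus F)$ for every such $F$. In the torus-invariant presentation of $A^{\bullet}(\underline{X}_E)$, the class associated to $B(\P)$ is $\sum_{\emptyset \subsetneq F \subsetneq E}(\rk_\P(E\setminus F) - r)\, z_F$, whose restriction to $\underline{A}^{\bullet}(\M)$ recovers the formula for $c_1(\underline{\mathcal{L}}_{B(\P)})$ from Remark~\ref{rem:chernclass}. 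Subtracting the two representatives on $\underline{X}_E$ produces
\[
\widetilde{D} = \sum_{\emptyset \subsetneq F \subsetneq E} \big(\rk_{\P_2}(E\setminus F) - \rk_{\P_1}(E\setminus F)\big)\, z_F,
\]
a nonnegative combination of the effective torus-invariant divisors $z_F$, hence an effective class on $\underline{X}_E$. By naturality of Chern classes under restriction, the image of $\widetilde{D}$ in $\underline{A}^1(\M)$ equals $D$, exhibiting $D$ as the restriction of an effective class.

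Finally, writing $A_i := c_1(\underline{\mathcal{L}}_{B(\P_i)})$, the telescoping identity
\[
A_2^{r-1} - A_1^{r-1} = D \cdot \sum_{k=0}^{r-2} A_2^{k}\, A_1^{r-2-k}
\]
in $\underline{A}^\bullet(\M)$ expresses the desired difference as a sum of products, each of the form $D$ times a product of exactly $r-2$ classes of the form $c_1(\underline{\mathcal{L}}_{B(\P)})$ for polymatroids $\P \in \{\P_1, \P_2\}$. Proposition~\ref{prop:combample}(2) then applies to each summand to give nonnegativity, and summing yields $\underline{\deg}_\M(A_2^{r-1}) \ge \underline{\deg}_\M(A_1^{r-1})$, as desired. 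The substantive step is the support-function computation that realizes $D$ as a restriction of an effective class on $\underline{X}_E$; once this is in hand, the conclusion is a formal consequence of Proposition~\ref{prop:combample}(2), with no further matroid-theoretic input required.
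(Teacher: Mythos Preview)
Your proof is correct and follows essentially the same approach as the paper: both show that the difference of divisor classes on $\underline{X}_E$ is effective, apply the same telescoping factorization, and invoke Proposition~\ref{prop:combample}(2) term by term. The only difference is that you spell out the effectivity via an explicit support-function computation, whereas the paper simply cites \cite[Section~2.7]{BEST}. One small slip: you write ``set $r = \rk(\M) = \rk(\P_1) = \rk(\P_2)$,'' but there is no reason the rank of $\M$ should equal the common rank of the $\P_i$; fortunately this is harmless, since the $\rk(\P_i)$ terms cancel in $\widetilde D$ and the exponent $r-1$ in the telescoping identity only needs $r = \rk(\M)$.
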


\begin{proof}
Because $B(\P_1) \subseteq B(\P_2)$, the difference of the divisor class in $A^1(\underline{X}_E)$ corresponding to $B(\P_2)$ with the divisor class corresponding to $B(\P_1)$ is an effective divisor class, see \cite[Section 2.7]{BEST}. Then,
\begin{equation*}\begin{split}
&c_1(\underline{\mathcal{L}}_{B(\P_2)})^{r-1} - c_1(\underline{\mathcal{L}}_{B(\P_1)})^{r-1} \\ 
&=  (c_1(\underline{\mathcal{L}}_{B(\P_2)}) - c_1(\underline{\mathcal{L}}_{B(\P_1)})) \cdot (c_1(\underline{\mathcal{L}}_{B(\P_2)})^{r-2} + c_1(\underline{\mathcal{L}}_{B(\P_2)})^{r-3}c_1(\underline{\mathcal{L}}_{B(\P_1)}) + \dotsb + c_1(\underline{\mathcal{L}}_{B(\P_1)})^{r-2}).
\end{split}\end{equation*}
By Proposition~\ref{prop:combample}, the degree of this class is nonnegative. 
\end{proof}

\bibstyle{alpha}
\bibliography{matroid.bib}

\end{document}